\numberwithin{equation}{section}
\let\originalleft\left
\let\originalright\right
\renewcommand{\left}{\mathopen{}\mathclose\bgroup\originalleft}
\renewcommand{\right}{\aftergroup\egroup\originalright}
\newcommand{\lemmachairxname}{Lemma}
\newcommand{\propositionchairxname}{Proposition}
\newcommand{\theoremchairxname}{Theorem}
\newcommand{\corollarychairxname}{Corollary}
\newcommand{\definitionchairxname}{Definition}
\newcommand{\claimchairxname}{Claim}
\newcommand{\examplechairxname}{Example}
\newcommand{\remarkchairxname}{Remark}
\newcommand{\questionchairxname}{Question}
\newcommand{\conjecturechairxname}{Conjecture}
\newcommand{\exercisechairxname}{Exercise}
\newcommand{\maintheoremchairxname}{Main Theorem}
\newcommand{\notationchairxname}{Notation}
\newcommand{\proofchairxname}{Proof}
\newcommand{\subproofchairxname}{Proof}
\newcommand{\hintchairxname}{Hint}
\SetString{\lemmachairxname}{Lemma}
\SetString{\propositionchairxname}{Proposition}
\SetString{\theoremchairxname}{Theorem}
\SetString{\corollarychairxname}{Corollary}
\SetString{\definitionchairxname}{Definition}
\SetString{\claimchairxname}{Claim}
\SetString{\examplechairxname}{Example}
\SetString{\remarkchairxname}{Remark}
\SetString{\questionchairxname}{Question}
\SetString{\conjecturechairxname}{Conjecture}
\SetString{\exercisechairxname}{Exercise}
\SetString{\maintheoremchairxname}{Main Theorem}
\SetString{\notationchairxname}{Notation}
\SetString{\proofchairxname}{Proof}
\SetString{\subproofchairxname}{Proof}
\SetString{\hintchairxname}{Hint}
\newtheorem{lemma}{\lemmachairxname}[section]
\newtheorem{proposition}[lemma]{\propositionchairxname}
\newtheorem{theorem}[lemma]{\theoremchairxname}
\newtheorem{corollary}[lemma]{\corollarychairxname}
\newtheorem{definition}[lemma]{\definitionchairxname}
\newtheorem{example}[lemma]{\examplechairxname}
 \def\theorem@checkbold{}
\theoremstyle{nonumberplain}
\newtheorem{proof}{\proofchairxname}
\newcommand{\myname}{\textbf{Stefan Waldmann}}
\newcommand{\myemail}{\texttt{stefan.waldmann@mathematik.uni-wuerzburg.de}}
\newcommand{\AuthorOne}{\textbf{Daniela Kraus}}
\newcommand{\AuthorTwo}{\textbf{Oliver Roth}}
\newcommand{\AuthorThree}{\textbf{Matthias Schötz}}
\newcommand{\AuthorFour}{\myname}
\author{\AuthorOne\thanks{\AuthorEmailOne},
        \addtocounter{footnote}{2}
        \AuthorTwo\thanks{\AuthorEmailTwo},
        \AuthorThree\thanks{\AuthorEmailThree},
        { }\textbf{and}
        \AuthorFour\thanks{\AuthorEmailFour}
        \\[0.5cm]
        Julius Maximilian University of Würzburg \\
        Department of Mathematics \\
        97074 Würzburg \\
        Germany
}
\newcommand{\AuthorEmailOne}{\texttt{dakraus@mathematik.uni-wuerzburg.de}}
\newcommand{\AuthorEmailTwo}{\texttt{roth@mathematik.uni-wuerzburg.de}}
\newcommand{\AuthorEmailThree}{\texttt{matthias.schoetz@mathematik.uni-wuerzburg.de}}
\newcommand{\AuthorEmailFour}{\myemail}
\title{A Convergent Star Product on the Poincar\'e Disc}
\date{March 2018}
\newcommand{\ZZ}{\mathbbm{Z}}
\newcommand{\RR}{\mathbbm{R}}
\newcommand{\CC}{\mathbbm{C}}
\newcommand{\PP}{\mathbbm{P}}
\newcommand{\NN}{\mathbbm{N}}
\newcommand{\DD}{\mathbbm{D}}
\newcommand{\Tangent}{\textup{T}}
\newcommand{\Cotangent}{\textup{T}^*}
\newcommand{\MMup}{\,\mathcal{J}}
\newcommand{\MMdown}{\,\mathcal{J}_{\Discstd}}
\newcommand{\metric}{g}
\newcommand{\metricDouble}{\hat{\metric}}
\newcommand{\Smooth}{\Cinfty}
\newcommand{\algebra}[1]      {\mathscr{#1}}
\newcommand{\group}[1]        {\mathrm{#1}}
\newcommand{\acts}            {\mathbin{\triangleright}}
\newcommand{\racts}           {\mathbin{\triangleleft}}
\newcommand{\cc}[1]          {\overline{{#1}}}
\DeclarePairedDelimiter{\abs}{\lvert}{\rvert}
\DeclarePairedDelimiter{\norm}{\lVert}{\rVert}
\newcommand{\Unit}           {\mathbbm{1}}
\newcommand{\I}              {\mathrm{i}}
\newcommand{\E}              {\mathrm{e}}
\newcommand{\D}              {\mathop{}\!\mathrm{d}}
\newcommand{\argument}       {\,\cdot\,}
\newcommand{\tensor}[1][{}]           {\mathbin{\otimes_{\scriptscriptstyle{#1}}}}
\newcommand{\RE}             {\mathsf{Re}}
\newcommand{\IM}             {\mathsf{Im}}
\newcommand{\lie}[1]          {\mathfrak{#1}}
\newcommand{\Lie}   {\mathscr{L}}
\newcommand{\at}[1]          {\big|_{#1}}
\newcommand{\Stetig}         {\mathscr{C}}
\DeclareMathOperator{\Spec}     {\mathrm{Spec}}
\newcommand{\At}[1]          {\Big|_{#1}}
\DeclareMathOperator{\Unitary}  {\mathfrak{U}}
\newenvironment{propositionlist}{\begin{compactenum}[\itshape i.)]}{\end{compactenum}}
\DeclarePairedDelimiter{\ordinaryPOI}{\{}{\}}
\newcommand{\poi}[3][]{\ordinaryPOI[#1]{\,#2 \,,\, #3\,}}
\DeclarePairedDelimiter{\ordinaryKOM}{[}{]}
\newcommand{\kom}[3][]{\ordinaryKOM[#1]{\,#2 \,,\, #3\,}}
\DeclarePairedDelimiter{\ordinaryIP}{\langle}{\rangle}
\newcommand{\skal}[3][]{\ordinaryIP[#1]{#2 \,#1|\, #3}}
\DeclarePairedDelimiter{\ordinarySet}{\{}{\}}
\newcommand{\set}[3][\big]{\ordinarySet[#1]{\,#2 \;#1|\; #3\,}}
\newcommand{\lieaction}[1]{\xi_{#1}}
\newcommand{\Levelsetstd}{Z}
\newcommand{\LevelsetDouble}{\hat{Z}}
\newcommand{\Discstd}[1][n]{D_#1}
\newcommand{\Discext}[1][n]{D_{#1,\text{ext}}}
\newcommand{\DiscDouble}[1][n]{\hat{D}_#1}
\newcommand{\DiscDoubleDefStd}{\hat{D}^\std_n}
\newcommand{\DiscDoubleDefP}{\hat{D}^P_n}
\newcommand{\DiscDoubleDefQ}{\hat{D}^Q_n}
\newcommand{\chartstd}{\phi^\std}
\newcommand{\chartstdDouble}{\hat{\phi}^\std}
\newcommand{\chartPDouble}{\hat{\phi}^P}
\newcommand{\chartQDouble}{\hat{\phi}^Q}
\newcommand{\chartstdDoubleInv}{(\hat{\phi}^\std)^{-1}}
\newcommand{\chartPDoubleInv}{(\hat{\phi}^P)^{-1}}
\newcommand{\chartQDoubleInv}{(\hat{\phi}^Q)^{-1}}
\newcommand{\hOben}{h}
\newcommand{\gOben}{g}
\newcommand{\omegaOben}{\omega}
\newcommand{\piOben}{\pi}
\newcommand{\hUnten}{h_d}
\newcommand{\gUnten}{g_d}
\newcommand{\omegaUnten}{\omega_d}
\newcommand{\piUnten}{\pi_d}
\newcommand{\inclusionLevelsetstd}{\iota}
\newcommand{\inclusionLevelsetDouble}{\hat{\iota}}
\newcommand{\prodiscstd}{\textup{pr}}
\newcommand{\prodiscDouble}{\hat{\textup{pr}}}
\newcommand{\DiagOben}{\Delta}
\newcommand{\DiagObenLevelset}{\Delta_{\Levelsetstd}}
\newcommand{\DiagUnten}{\Delta_D}
\newcommand{\DiagUntenExt}{\Delta_{\textup{x}}}
\newcommand{\DiagCP}{\Delta_\PP}
\newcommand{\involutionAll}{\tau}
\DeclareMathOperator{\starTilde}{\tilde{\star}}
\newcommand{\starWick}[1][]{\starTilde_{#1}}
\newcommand{\starRed}[1][]{\star_{#1}}
\newcommand{\formalPS}[1]{[\![#1]\!]}
\newcommand{\monomial}[2]{\textup{d}_{#1,#2}}
\newcommand{\monomialDown}[2]{\textup{f}_{#1,#2}}
\newcommand{\monomialDownHat}[2]{\hat{\textup{f}}_{#1,#2}}
\newcommand{\monomialDownRed}[2]{\textup{f}_{\textup{r},#1,#2}}
\newcommand{\monomialDownRedHat}[2]{\hat{\textup{f}}_{\textup{r},#1,#2}}
\newcommand{\CoeffUnten}[2]{\textup{f}^{\,\prime}_{\textup{r},#1,#2}}
\newcommand{\Transpose}{T}
\newcommand{\embedInCPn}{\iota_{\PP}}
\newcommand{\embedInCPnExt}{\iota_{\textup{ext},\PP}}
\newcommand{\embedInCPnCpn}{\iota_{\PP\times\PP}}
\newcommand{\embedInExt}{\iota_{\textup{ext}}}
\newcommand{\Holomorphic}{\mathcal{O}}
\newcommand{\AnalyticOben}{\algebra{A}(\CC^{1+n})}
\newcommand{\PolynomeOben}[1][]{\algebra{P}(\CC^{1+n})^{#1}}
\newcommand{\PolynomeUnten}[1][]{\algebra{P}(\Discstd)^{#1}}
\newcommand{\AnalyticUnten}[1][n]{\algebra{A}(\Discstd[#1])}
\newcommand{\Cinfty}{\mathscr{C}^{\infty}}
\DeclareFontFamily{U}{FdSymbolF}{}
\DeclareFontShape{U}{FdSymbolF}{m}{n}{
    <-7.1> FdSymbolF-Book
    <7.1-> FdSymbolF-Book
}{}
\DeclareSymbolFont{delimiters}{U}{FdSymbolF}{m}{n}
\DeclareMathDelimiter{\llangle}{\mathopen}{delimiters}{"92}{delimiters}{"92}
\DeclareMathDelimiter{\rrangle}{\mathclose}{delimiters}{"98}{delimiters}{"98}
\newcommand{\seminormOben}[3][]{\norm[#1]{#3}_{\CC^{1+n},#2}}
\newcommand{\seminormUnten}[3][]{\norm[#1]{#3}_{\Discstd,#2}}
\newcommand{\std}{\textup{std}}
\newcommand{\HbarDef}{H}
\newcommand{\PreHilb}{\mathcal{H}}
\newcommand{\Adbar}{\mathcal{L}^*}
\newcommand{\rep}{\pi}
\newcommand{\GroupRep}{\textbf{U}}
\newcommand{\GroupRepDeriv}{\D\GroupRep}
\newcommand{\OpClosure}{{\textup{cl}}}
\newcommand{\invSpecial}{\Sigma}
\newcommand{\ReductionMap}[1]{\Psi_{#1}}
\newcommand{\ReductionMapInv}[1]{\Phi_{#1}}
\newcommand{\UnitHom}{\textup{Spec}}
\newcommand{\Characters}{\textup{Spec}^*}
\begin{document}

\selectlanguage{english}

%
%

\maketitle

%
%

\begin{abstract}
    On the Poincar\'e disc and its higher-dimensional analogs one has a
    canonical formal star product of Wick type. We define a locally
    convex topology on a certain class of real-analytic functions on
    the disc for which the star product is continuous and converges as
    a series. The resulting Fr\'echet algebra is characterized
    explicitly in terms of the set of all holomorphic functions on an extended and doubled disc
    of twice the dimension endowed with the natural topology
    of locally uniform convergence. We discuss the holomorphic dependence on
    the deformation parameter and the positive functionals and their
    GNS representations of the resulting Fr\'echet algebra.
\end{abstract}

%
%

\tableofcontents
\newpage

%
%

\section{Introduction}
\label{sec:Introduction}

Deformation quantization comes in two principle flavours: formal
deformation quantization as introduced in \cite{bayen.et.al:1978a}
considers formal associative deformations of the algebra $\Cinfty(M)$
of smooth functions on a Poisson manifold $M$
in the sense of Gerstenhaber
\cite{gerstenhaber:1964a}. Here the existence as well as the
classification of such formal star products is well-understood and
established, see \cite{kontsevich:2003a} for the final case of Poisson
manifolds and e.g. \cite{fedosov:1994a, dewilde.lecomte:1983b,
  nest.tsygan:1995a, bertelson.cahen.gutt:1997a} for the symplectic
case. While the formal deformations are well-understood, they still
suffer from being not yet physically relevant: the deformation
parameter plays the role of Planck's constant $\hbar$ and thus should
be treated not as formal. This leads to the second flavour of
deformation quantization where one tries to find a more analytic
framework in such a way that the deformation of the algebra of
functions on the classical phase space depends continuously or even
analytically on $\hbar$. Here the situation is less clear as there is
not yet a general framework which would allow to prove general
existence or classification results. Instead, one has several
competing definitions of what a non-formal version of a formal star
product should be, most notably a $C^*$-algebraic approach as taken
e.g. in \cite{bieliavsky.gayral:2015a, natsume:2000a, rieffel:1993a,
  natsume.nest:1999a, natsume.nest.peter:2003a} where several classes
of examples are studied. There the notion of deformation is based on
continuous fields of $C^*$-algebras which satisfy the correct
asymptotic for $\hbar \longrightarrow 0$ on a sufficiently large
domain. While the $C^*$-algebraic environment is of course best suited
to purposes in quantum theory, the dependence on $\hbar$ is typically
not much better than continuous. Moreover, the construction of the
continuous fields is fairly complicated. As a final remark on these
constructions one should note that on the technical level, all of them
are based on certain integration procedures which are limited to
finite-dimensional phase spaces. Thus the integral formulas for the
non-formal products can not be generalized to field-theoretic
situations.

The alternative approach consists now in finding suitable subalgebras
of the formal star product algebra where the series defining the
product actually converges. To make sense out of such an attempt one
needs to specify the notion of convergence. Here a wide variety of
possibilities exists, ranging from pointwise or even weaker notions of
convergence to more uniform versions taking into account also
convergence of derivatives etc. In previous works, such subalgebras
with natural locally convex topologies have been identified and
studied in several examples, allowing also for infinite-dimensional
ones: based on the earlier works \cite{beiser.roemer.waldmann:2007a,
  omori.maeda.miyazaki.yoshioka:2000a} for finite dimensions, the case
of a locally convex Poisson vector space with continuous constant
Poisson structure was treated in detail in \cite{waldmann:2014a} and
specified further in \cite{schoetz.waldmann:2018a} for the case where
the underlying topology is pro-Hilbert. Moreover, in
\cite{esposito.stapor.waldmann:2017a} the case of linear Poisson
structures, i.e. the Poisson structure on the dual of a Lie algebra,
with the Gutt star product \cite{gutt:1983a} was studied in
detail. Also in this case infinite-dimensional Lie algebras can be
included once the Lie bracket has a certain continuity property always
satisfied in the finite-dimensional case. Finally, the first truly
curved example, the Poincar\'e disc $D_n$,
was studied in
\cite{beiser.waldmann:2014a, beiser:2011a} where the convergence of
the Wick star product obtained by reduction as in
\cite{bordemann.brischle.emmrich.waldmann:1996a,
  bordemann.brischle.emmrich.waldmann:1996b} is shown for a locally
convex topology based on the growth of the coefficients with respect
to a particular Schauder basis of functions on the disc.

It is this example which we investigate more closely: first, we
improve the results of \cite{beiser.waldmann:2014a} in so far as we
prove the continuity of the Wick product with respect to a slightly
coarser topology on the same underlying linear span of the (to become
a) Schauder basis. This gives a slightly larger completion to a
Fr\'echet algebra as before. The impact of this 
construction is now that we are able to explicitly characterize
this Fr\'echet algebra in terms of the F\'rechet algebra $\Holomorphic(\DiscDouble)$
of all holomorphic functions on a  complex manifold $\DiscDouble$, which is
obtained by doubling the Poincar\'e disc $\Discstd$ in a geometrically
faithful way.
The original functions on the disc $\Discstd$ are then obtained by restricting  these
holomorphic functions on $\DiscDouble$ to the ``diagonal'' disc $D_n$.

The explicit characterization of the completion opens the doors to
many further investigations of this example. In particular, we are
able to determine the (classically) positive functionals and show that
the evaluation functionals on the disc stay positive with respect to
the Wick star product, too. In the corresponding GNS representation we
are able to prove
(essential) self-adjointness of many elements of the
algebra, among them the generators of the $\mathfrak{su}(1, n)$
symmetry.

As an open question in the one--dimensional case $n=1$ we mention that the convergent star product we
construct will immediately pass to Riemann surfaces of higher
genus \emph{provided} we have enough invariant functions under the
corresponding Fuchsian group on the unit disk $D_1\cong\DD:=\{z\in
  \CC : |z|<1\}$ in our algebra. At the moment, this is
not completely clear and therefore we postpone this question to a
future project. In any case, it is fairly easy to see that we have
many bounded functions in the completion even though in the linear
span of the Schauder basis there are only the constant functions
bounded. In any case, this question becomes an entirely classical
question independent of the deformed product: one needs to understand
the holomorphic functions on the extended doubled disc. This will also
allow a more detailed comparison with the very recent approach of
Bieliavsky \cite{bieliavsky:2017a} to quantum Riemann
surfaces where the $C^*$-algebraic approach is used.

Ultimately, we believe that this example contains many options to
approach also more complicated situations of Wick type quantization of
certain K\"ahler manifolds. The rich geometry of the doubled and
extended disc will be present in further sufficiently symmetric cases
as well.

The paper is organized as follows: in Section~\ref{sec:Construction}
we recall the construction of the Wick star product on the Poincar\'e
disc by phase space reduction and describe the geometry of the
extended disc. In Section~\ref{sec:ConstructionFrechetAlgebra} we
define the topology based on the coefficient expansion with respect to
the basis functions on the disc. The main result is then the
characterization of the completion and the convergence of the Wick
star product as a series for all elements in the completion.  In the
last section we investigate many properties of the Frech\'et algebra we
obtained: the dependence on the deformation parameter $\hbar$ turns
out to be holomorphic on
$\mathbbm{C} \setminus \{0, - \frac{1}{2m } \; | \; m \in \mathbbm{N}\}$
and we have an asymptotic in the Frech\'et topology of the product for
$\hbar \longrightarrow 0^+$ establishing the correct semi-classical
limit. Moreover, we determine the positive functionals and their GNS
representations showing the essential self-adjointness of many
important algebra elements in all continuous representations. Finally, in the
case of the disc, i.e. in complex dimension $n = 1$, we have an
additional discrete symmetry not present in higher dimensions.

%
%

\vspace{1em}
\noindent
\textbf{Acknowledgement:}

\noindent
Two of us (M. Schötz, S. Waldmann) would like to thank Simone Gutt, UlB, for valueable discussions.

%
%

\section{Preliminaries: The Construction of the Star Product}
\label{sec:Construction}

In this section we are going to present the essential steps of the
construction of the star product on the Poincar\'e disc, introduce our
notation, and also discuss some additional structures that will be
helpful later on. Essentially everything is either standard concerning
the Marsden-Weinstein reduction or can be found in the previous works
\cite{bordemann.brischle.emmrich.waldmann:1996a,
  bordemann.brischle.emmrich.waldmann:1996b, beiser.waldmann:2014a,
  beiser:2011a} concerning the construction of the star product, see
also \cite{cahen.gutt.rawnsley:1995a, cahen.gutt.rawnsley:1994a,
  cahen.gutt.rawnsley:1993a, cahen.gutt.rawnsley:1990a} for yet
another approach to this star product on the Poincar\'e disc.

First we are going to introduce various manifolds and maps between
them: In addition to the original manifold $\CC^{1+n}$, the level set
$\Levelsetstd$, and the reduced manifold $\Discstd$ from the
Marsden-Weinstein reduction this includes extensions of these to
complex manifolds $\CC^{1+n}\times\CC^{1+n}$, $\LevelsetDouble$, and
$\DiscDouble$ that will allow us to define certain spaces of
real-analytic functions on $\CC^{1+n}$ and $\Discstd$.

Next we construct the classical Poisson $^*$\=/algebra on the space
$\Smooth(\Discstd)$ of all complex-valued smooth functions on
$\Discstd$, its subalgebras
$\AnalyticUnten$ and $\PolynomeUnten$ of analytic and polynomial functions, as well as the classical
reduction map $\ReductionMap{0}$ from functions on $\CC^{1+n}$ to such
on $\Discstd$.

The deformed quantum algebra on $\Discstd$ can then be obtained by a
similar reduction procedure starting from the space of polynomials on
$\CC^{1+n}$ with a Wick-type star product.

%
%

\subsection{\texorpdfstring{The Poincar\'e disc $\Discstd$}{The Poincar\'e disc Dn}}
\label{subsection:geometry}
The whole geometric construction can be summarized by the following
commutative diagram (the detailed description follows).  The upper
horizontal row consists of complex manifolds, each of which is
equipped with an anti-holomorphic involution $\involutionAll$ and a
smooth action of the Lie group
$\group{U}(1, n) = \group{U}(1) \times \group{SU}(1, n)$ by
holomorphic automorphisms that commute with $\involutionAll$. The
arrows between them are holomorphic maps and equivariant with respect
to the $\group{U}(1,n)$-action and the involution $\involutionAll$.
All other objects are (at least) smooth manifolds equipped with a
smooth action of $\group{U}(1, n)$ and all other arrows are
$\group{U}(1, n)$-equivariant smooth maps:
\begin{center}
  \begin{tikzcd}
    \Big.\CC^{1+n}\times\CC^{1+n} \quad
    &&
    \arrow[swap]{ll}{ \hat{\iota} }
    \Big.\quad \LevelsetDouble \quad
    \arrow{rr}{\prodiscDouble }
    &&
    \Big.\quad \DiscDouble \quad
    \arrow{rr}{ \embedInCPnCpn }
    &&
    \Big.\quad \CC\PP^n\times\CC\PP^n
    \\
    &&
    &&
    &
     \arrow[swap]{lu}{\DiagUntenExt}
    \Big.\quad \Discext \quad
     \arrow{rd}{ \iota_{\textup{x},\PP} }
    &
    \\
    \Big.\quad \CC^{1+n}\quad
    \arrow[swap]{uu}{\DiagOben}
    &&
    \arrow[swap]{ll}{ \inclusionLevelsetstd }
    \Big.\quad \Levelsetstd \quad
    \arrow[swap]{uu}{\DiagObenLevelset}
    \arrow{rr}{ \prodiscstd }
    &&
    \Big.\quad \Discstd \quad
    \arrow[swap]{uu}{\DiagUnten}
    \arrow{ru}{ \embedInExt }
    \arrow{rr}{ \embedInCPn }
    &&
    \Big.\quad \CC\PP^n \quad
    \arrow[swap]{uu}{\DiagCP}
  \end{tikzcd}
\end{center}

%
%

\subsubsection*{Bottom row:}
The Poincar\'e disc $\Discstd$ can be constructed as a quotient of a
subset $\Levelsetstd$ of $\CC^{1+n}$. This procedure will be linked to
Marsden-Weinstein reduction in Section~\ref{subsection:classically}.

On the smooth manifold $\CC^{1+n}$ with standard (complex) coordinates
$z^0,\dots,z^n\colon \CC^{1+n} \to \CC$, the Lie group
$\group{U}(1,n)$ acts from the left via $U\acts r := Ur$ for
$U\in \group{U}(1,n)$ and $r\in \CC^{1+n}$. Define
\begin{equation}
    \label{eq:KaehlerPotential}
    \metric := h_{\mu\nu} z^\mu \cc{z}^\nu \in\Smooth(\CC^{1+n}),
\end{equation}
where $h_{00} := -1$, $h_{ii} := 1$ for $i \in \{1, \ldots, n\}$ and
$h_{\mu\nu} := 0$ otherwise, then
\begin{equation}
    \label{eq:TheLevelset}
    \Levelsetstd
    :=
    \metric^{-1}\big(\{-1\}\big)
    =
    \set[\Big]{
      r\in \CC^{1+n}
    }{
      \abs{z^0(r)}^2 = 1 + \sum\nolimits_{i=1}^n \abs{z^i(r)}^2
    }
\end{equation}
is the orbit of $(1, 0, \ldots, 0)^\Transpose \in \CC^{1+n}$ under the
$\group{U}(1, n)$-action and a submanifold of $\CC^{1+n}$.
Consequently, the action of $\group{U}(1, n)$ can be restricted to
$\Levelsetstd$ and the canonical inclusion $\inclusionLevelsetstd$ of
the subset $\Levelsetstd$ in $\CC^{1+n}$ is of course
$\group{U}(1, n)$-equivariant.

Next consider the (compact) Lie subgroup
$\group{U}(1) \cong \set{\E^{\I \phi} \Unit_{1+n}}{\phi \in \RR}
\subseteq \group{U}(1, n)$ and construct the quotient manifold
\begin{equation}
    \label{eq:TheDisc}
    \Discstd
    :=
    \Levelsetstd \big/ \group{U}(1).
\end{equation}
As the $\group{U}(1)$-subgroup lies in the center of
$\group{U}(1, n)$, the action of $\group{U}(1, n)$ remains
well-defined on $\Discstd$ (of course, only the complementary
$\group{SU}(1, n)$-subgroup acts non-trivially) and is still
transitive. The canonical projection
$\prodiscstd\colon \Levelsetstd\to \Discstd$ then is an
$\group{U}(1, n)$-equivariant smooth map.

Finally, $\Discstd$ can be embedded injectively in $\CC\PP^n$ via
$\embedInCPn\colon \Discstd\to\CC\PP^n$,
\begin{equation}
    \label{eq:InjectDiscToCPn}
    [r]_{\group{U}(1)}
    \mapsto
    \embedInCPn\big([r]_{\group{U}(1)}\big)
    :=
    [r]_{\CC_*},
\end{equation}
where we denote the equivalence classes with respect to the different
group actions by $[\argument]_{\group{U}(1)}$ and
$[\argument]_{\CC_*}$, respectively, to indicate the different
equivalence relations.  This embedding is also
$\group{U}(1, n)$-equivariant with respect to the
$\group{U}(1, n)$-action on $\CC\PP^n$ inherited from
$\CC^{1+n}\setminus\{0\}$, i.e. $U\acts[r] := [Ur]$ for
$U\in\group{U}(1, n)$ and $[r] \in \CC\PP^n$. Note that $\CC\PP^n$ is
the disjoint union
\begin{equation}
    \label{eq:DecomposeCPn}
    \CC\PP^n
    =
    \set[\big]{[r]\in\CC\PP^n}{\metric(r)<0}
    \cup
    \set[\big]{[r]\in\CC\PP^n}{\metric(r)=0}
    \cup
    \set[\big]{[r]\in\CC\PP^n}{\metric(r)>0},
\end{equation}
and that the image of $\embedInCPn$ is
$\set{[r] \in \CC\PP^n}{\metric(r) < 0}$. From $\CC\PP^n$, the disc
$\Discstd$ inherits the structure of a complex $n$-dimensional
manifold, and can even be covered by a single holomorphic chart
$\chartstd = (w^1, \ldots, w^n)^\Transpose \colon \Discstd\to
\DD_n\subseteq \CC^n$, where $\DD_n$ is the  $n$-dimensional
  polydisc  $\DD \times \cdots \times \DD$
and
\begin{equation}
    \label{eq:StandardChart}
    w^i([r]) := \frac{z^i(r)}{z^0(r)}
\end{equation}
for all $i \in \{1, \ldots, n\}$ and $[r]\in \Discstd$. This chart
actually is a biholomorphic mapping from $\Discstd$ to $\DD_n$ and
the action of $\group{U}(1, n)$ on $\Discstd$ in this chart is
described by M\"obius transformations
\begin{equation}
    \label{eq:MoebiusTrafo}
    \chartstd\big(U\acts [r]\big)
    =
    \chartstd([Ur])
    =
    \frac{c+A\chartstd([r])}{\alpha+b\cdot\chartstd([r])}
    \quad
    \textrm{ for }
    \quad
    [r]\in \Discstd \text{ and }
    U
    =
    \begin{pmatrix}
        \alpha & b^\Transpose \\
        c & A
    \end{pmatrix}
    \in \group{SU}(1, n),
\end{equation}
with $\alpha\in \CC$, $b, c \in \CC^n$, and $A \in \CC^{n\times n}$
such that $U\in\group{SU}(1, n)$.

%
%

\subsubsection*{Top row:}
As we are interested in a non-formal star product on $\Discstd$, which
cannot be constructed on all smooth functions but only on certain
analytic functions on $\Discstd$, we have to extend the above
construction in a certain way that allows us to describe these
analytic functions appropriately, i.e. as pullback with a smooth map
$\DiagUnten\colon \Discstd \to \DiscDouble$ of holomorphic functions
on some complex manifold $\DiscDouble$. This leads to the top row of
the diagram above.

On the complex manifold $\CC^{1+n} \times \CC^{1+n}$ with standard
holomorphic coordinate functions
$x^0, \ldots, x^n, \allowbreak y^0, \ldots, y^n$, the group
$\group{U}(1, n)$ acts from the left via
$U\acts (p, q) := (Up, \cc{U}q)$ for all $U \in \group{U}(1, n)$ and
$p, q \in \CC^{1+n}$. Define the $\group{U}(1, n)$-equivariant
anti-holomorphic involution $\involutionAll(p, q) := (\cc{q}, \cc{p})$
on $\CC^{1+n}\times\CC^{1+n}$ as well as the
$\group{U}(1, n)$-equivariant diagonal inclusion
$\DiagOben\colon \CC^{1+n}\to \CC^{1+n}\times\CC^{1+n}$
\begin{equation}
    \label{eq:DiagonalMap}
    r\mapsto \DiagOben(r) := (r, \cc{r}),
\end{equation}
then $\DiagOben$ describes a diffeomorphism from $\CC^{1+n}$ to
$\set{(p, q) \in \CC^{1+n} \times \CC^{1+n}}{\involutionAll(p, q) =
  (p, q)}$.  Moreover, define
\begin{equation}
    \label{eq:DefmetricDouble}
    \metricDouble
    :=
    h_{\mu\nu} x^\mu y^\nu
    \in
    \Holomorphic(\CC^{1+n}\times\CC^{1+n}),
\end{equation}
then $\metricDouble\circ\DiagOben = \metric$ holds. Let
\begin{equation}
    \label{eq:DoubleLevelSet}
    \LevelsetDouble
    :=
    \metricDouble^{-1}\big(\{-1\}\big),
\end{equation}
then $\LevelsetDouble$ is a holomorphic submanifold on
$\CC^{1+n} \times \CC^{1+n}$, the $\group{U}(1, n)$-action as well as
the anti-holomorphic involution $\involutionAll$ can be restricted to
$\LevelsetDouble$ and so the canonical inclusion
$\inclusionLevelsetDouble\colon \LevelsetDouble \to
\CC^{1+n}\times\CC^{1+n}$
is $\group{U}(1, n)$- and $\involutionAll$-equivariant.  Define
$\DiagObenLevelset\colon \Levelsetstd\to \LevelsetDouble$ as the
restriction of $\DiagOben$, then $\DiagObenLevelset$ is still
$\group{U}(1, n)$-equivariant, the left rectangle of the above diagram
commutes and $\DiagObenLevelset$ describes a diffeomorphism from
$\Levelsetstd$ to
$\set{(p, q) \in \LevelsetDouble}{\involutionAll(p, q) = (p, q)}$.

The action of the Lie subgroup
$\group{U}(1) \cong \set{\E^{\I \phi} \Unit_{1+n}}{\phi \in \RR}
\subseteq \group{U}(1, n)$
on $\LevelsetDouble$ can be extended to a holomorphic action of
$\CC_*$ via $z\acts (p, q) := (zp, q/z)$ for all
$(p, q)\in\Levelsetstd$ and $z \in \CC_*$. As this action is free and
proper, we can construct the holomorphic quotient manifold
\begin{equation}
    \label{eq:DiscDouble}
    \DiscDouble := \LevelsetDouble \big/ \CC_*.
\end{equation}
The $\CC_*$-action on $\LevelsetDouble$ commutes with the
$\group{U}(1, n)$-action, so the $\group{U}(1, n)$-action remains
well-defined on $\DiscDouble$ (again, only the
$\group{SU}(1, n)$-subgroup acts non-trivially) and the canonical
projection $\prodiscDouble\colon \LevelsetDouble\to \DiscDouble$ is
$\group{U}(1, n)$-equivariant. Moreover, there is a unique
anti-holomorphic involution $\involutionAll$ on $\DiscDouble$ such
that $\prodiscDouble$ becomes also $\involutionAll$-equivariant,
namely $\involutionAll\big([p, q]\big) := [(\cc{q}, \cc{p})]$, which
is indeed well-defined and commutes with the action of
$\group{U}(1, n)$.  The map $\DiagObenLevelset$ remains well-defined
on the quotients, so
$\DiagUnten\colon \Discstd \to \DiscDouble$,
\begin{equation}
    \label{eq:DiagonalUnten}
    [r]\mapsto\DiagUnten([r]) := \DiagObenLevelset(r)
\end{equation}
is well-defined and a smooth and $\group{U}(1, n)$-equivariant map. It
is now easy to see that the central rectangle in the above diagram
commutes as well.

Finally, we can again embed $\DiscDouble$ injectively in
$\CC\PP^n \times \CC\PP^n$ via
$\embedInCPnCpn\colon \DiscDouble \to \CC\PP^n \times \CC\PP^n$
\begin{equation}
    \label{eq:DoubleDiscToDoubleCPn}
    [p, q]_{\CC_*} \mapsto \embedInCPnCpn\big([p, q]_{\CC_*}\big)
    :=
    \big([p]_{\CC_*}, [q]_{\CC_*}),
\end{equation}
which is equivariant with respect to the $\group{U}(1, n)$-action on
$\CC\PP^n \times \CC\PP^n$ defined as
$U\acts\big([p], [q]) := \big([Up], [\cc{U}q]\big)$ for all
$[p], [q] \in \CC\PP^n$ and $U \in \group{U}(1, n)$. It is also
$\involutionAll$-equivariant if one defines the anti-holomorphic
involution $\involutionAll$ on $\CC\PP^n \times \CC\PP^n$ as
$\involutionAll([p], [q]) := ([\cc{q}], [\cc{p}])$.  The image of
$\embedInCPnCpn$ in $\CC\PP^n \times \CC\PP^n$ is then
$\set{\big([p], [q]\big) \in \CC\PP^n \times
  \CC\PP^n}{\metricDouble(p, q) \neq 0}$
and pulling back the usual charts from $\CC\PP^n \times \CC\PP^n$ to
$\DiscDouble$ yields suitable holomorphic charts on $\DiscDouble$. We
especially define the standard chart
$\chartstdDouble = (u^1, \ldots, u^n, v^1, \ldots, v^n)^\Transpose
\colon \DiscDoubleDefStd \to C^\std\subseteq \CC^n\times\CC^n$ by
\begin{equation}
    \label{eq:standardcoord}
    u^i([p, q])
    :=
    \frac{x^i(p, q)}{x^0(p, q)}
    \quad
    \textrm{and}
    \quad
    v^i([p, q])
    :=
    \frac{y^i(p, q)}{y^0(p, q)}
\end{equation}
for all $i \in \{1, \ldots, n\}$ and $[p, q] \in \DiscDouble$, with
domain
\begin{equation}
  \DiscDoubleDefStd := \set[\big]{[p, q] \in \DiscDouble}{x^0(p, q)
  \neq 0 \textrm{ and } y^0(p, q) \neq 0}
\end{equation}
and image
$C^\std := \set[\big]{(p, q) \in \CC^n \times \CC^n}{p \cdot q \neq
  1}$.
Moreover, let $\DiagCP\colon \CC\PP^n \to \CC\PP^n \times \CC\PP^n$ be
the $\group{U}(1, n)$-equivariant diagonal inclusion
\begin{equation}
    \label{eq:YetAnotherDiagonal}
    [r] \mapsto \DiagCP\big([r]\big) := \big([r], [\cc{r}]),
\end{equation}
then the right rectangle in the above diagram commutes.

%
%

\subsubsection*{\texorpdfstring{The extended disc $\Discext$:}{The extended disc Dn,ext:}}
Comparing the images of the embeddings of $\Discstd$ and $\DiscDouble$
in $\CC\PP^n$ and $\CC\PP^n\times\CC\PP^n$, respectively, shows that
the image of $\Discstd$ under $\DiagUnten$ in $\DiscDouble$ is only
contained in, but not the whole set,
\begin{equation}
    \label{eq:ExtendedDisc}
    \Discext
    :=
    \set[\big]{
      [p, q] \in \DiscDouble
    }{
      \involutionAll\big([p, q]\big) = [p, q]
    },
\end{equation}
which is a smooth submanifold of $\DiscDouble$ and stable under the
$\group{U}(1, n)$-action. Let
$\DiagUntenExt\colon \Discext \to \DiscDouble$ be the canonical
embedding, which is of course $\group{U}(1, n)$-equivariant.  Then the
diagonal inclusion $\DiagUnten$ of $\Discstd$ in $\DiscDouble$ factors
through $\DiagUntenExt$, such that
$\DiagUnten = \DiagUntenExt \circ \embedInExt$ with a unique
$\group{U}(1, n)$-equivariant smooth map
$\embedInExt\colon \Discstd \to \Discext$. At last, the above
commutative diagram is completed by the smooth
$\group{U}(1, n)$-equivariant map
$\embedInCPnExt\colon \Discext \to \CC\PP^n$,
\begin{equation}
    \label{eq:ExtendedDiscToCPn}
    [p, q]_{\CC_*}
    \mapsto
    \embedInCPnExt\big([p,q]_{\CC_*}\big) := [p]_{\CC_*},
\end{equation}
which is an injective embedding of $\Discext$ in $\CC\PP^n$ with image
$\set{[p] \in \CC\PP^n}{\metric(p) \neq 0}$: This is a consequence of
the observation that, given $p \in \CC^{1+n}$ with
$\metric(p) \neq 0$, there is a unique $q \in \CC^{1+n}$ such that
$[p, q] \in \Discext$, namely $q = -\cc{p}/\metric(p)$, and
$\embedInCPnExt\big([p, q]\big) = [p]$.

%
%

\subsection{The Classical Poisson Algebra}
\label{subsection:classically}
The Poisson structure on $\Discstd$ comes from a K\"ahler structure
which can be obtained from a variant of Marsden-Weinstein reduction
from $\CC^{1+n}$: consider the complex $2$-form
$\hOben = h_{\mu\nu} \D \cc{z}^\mu \tensor \D z^\nu \in
\Gamma(\Lambda^2 \Cotangent \CC^{1+n})$,
then $\hOben$ endows $\CC^{1+n}$ with the structure of a pseudo K\"ahler
manifold with pseudo Riemannian metric
$\gOben := \RE(h) = \frac{1}{2} h_{\mu\nu} \D \cc{z}^\mu \vee \D
z^\nu$
and K\"ahler symplectic form
$\omegaOben := \IM(h) = \frac{1}{2\I} h_{\mu\nu}\D \cc{z}^\mu \wedge
\D z^\nu$.
The inverse tensors are then
$\gOben^{-1} = 2 h^{\mu\nu} \partial_{z^\mu}
\vee \partial_{\cc{z}^\nu}$
and $\piOben := \omegaOben^{-1}=2\I h^{\mu\nu}
\partial_{z^\mu}\wedge\partial_{\cc{z}^\nu}$
with $h^{\mu\nu} = h_{\mu\nu}$ for all
$\mu, \nu \in \{0, \ldots, n\}$, so that the Poisson bracket
$\poi{\argument}{\argument}$ on $\CC^{1+n}$ is given by
\begin{equation}
    \label{eq:PoissonBracketOben}
    \poi{a}{b}
    :=
    \piOben\big(\D a \tensor \D b\big)
    =
    2\I h^{\mu\nu}
    \bigg(
    \frac{\partial a}{\partial z^\mu} \frac{\partial b}{\partial \cc{z}^\nu}
    -
    \frac{\partial b}{\partial z^\mu} \frac{\partial a}{\partial \cc{z}^\nu}
    \bigg)
\end{equation}
for all $a, b \in \Smooth(\CC^{1+n})$. It fulfils
$\poi{a}{b}^* = \poi{a^*}{b^*}$ because the Poisson tensor $\pi$ is
real. Note that $\hOben$, hence also $\gOben$ and $\omegaOben$, are
$\group{U}(1, n)$-invariant, so the Poisson bracket is
$\group{U}(1, n)$-equivariant, i.e.
$\poi{a\racts U}{b\racts U} = \poi{a}{b}\racts U$ for all
$a, b \in \Smooth(\CC^{1+n})$ and $U \in \group{U}(1, n)$.

The action of $\group{U}(1, n)$ is not only K\"ahler, i.e. preserves
$\omegaOben$ and $\gOben$, but also Hamiltonian with an equivariant
momentum map. Explicitly, the infinitesimal action of $\lie{u}(1, n)$
on spaces of tensor fields on $\CC^{1+n}$ is given by
$X \racts u = \Lie_{\lieaction{u} + \cc{\lieaction{u}}} X$, where
$\lie{u}(1, n)\ni u \mapsto \lieaction{u} = u^\mu_\nu
z^\nu\partial_{z^\mu}\in \Gamma(\Tangent \CC^{1+n})$
is an anti-morphism of Lie algebras. An equivariant momentum map for
this action is then given by
$\MMup(u)\colon \lie{u}(1, n) \to \Smooth(\CC^{1+n})$,
\begin{equation}
  \MMup(u)
  :=
  \frac{1}{2\I} h_{\mu\nu} u^\mu_\rho z^\rho \cc{z}^\nu,
\end{equation}
because a straightforward calculation shows that
$\poi{f}{\MMup(u)} = \lieaction{u}(f) + \cc{\lieaction{u}}(f) =
f\racts u$
as well as $\poi{\MMup(u)}{\MMup(v)} = \MMup\big(\kom{u}{v}\big)$
holds for all $u, v \in \lie{u}(1, n)$ and $f \in \Smooth(\CC^{1+n})$.

Note that $\metric = \MMup(2\I \Unit_{1+n})$ with
$2\I \Unit_{1+n} \in \lie{u}(1, n)$ generating the
$\group{U}(1)$-subgroup $\set{\E^{\I\phi}\Unit_{1+n}}{\phi \in \RR}$
of $\group{U}(1, n)$, so $\Levelsetstd$ is a $\lie{u}(1)$-level set
and $\Discstd = \Levelsetstd / \group{U}(1)$ the resulting
Marsden-Weinstein quotient. The K\"ahler structure on $\Discstd$ in the
standard coordinates $\chartstd = (w^1, \ldots, w^n)^\Transpose$ is
then given by the $2$-form
\begin{equation}
    \label{eq:hUnten}
    \hUnten
    =
    (1 - \cc{w}\cdot w)^{-2}
    \big(
    \delta_{ij}(1 - \cc{w}\cdot w)
    +
    \cc{w}^k w^\ell \delta_{ki}\delta_{\ell j}
    \big)
    \D \cc{w}^i \tensor \D w^j,
\end{equation}
and thus the K\"ahler metric becomes $\gUnten = \RE(\hUnten)$ while the
K\"ahler symplectic form is $\omegaUnten = \IM(\hUnten)$.  Finally, the
Poisson tensor is
\begin{equation}
    \label{eq:PoissonUnten}
    \piUnten
    =
    \omegaUnten^{-1}
    =
    2\I
    (1 - \cc{w}\cdot w)
    (\delta^{ij} - \cc{w}^iw^j)
    \partial_{w^i} \wedge \partial_{\cc{w}^j}.
\end{equation}

From the point of view of deformation quantization, the description of
the geometric reduction in terms of function algebras becomes more
important: we outline here how this can be accomplished. Let
$\Smooth(\CC^{1+n})_{\Levelsetstd}$ be the $^*$-ideal in
$\Smooth(\CC^{1+n})$ consisting of all functions in
$\Smooth(\CC^{1+n})$ that vanish on $\Levelsetstd$, then
$\Smooth(\CC^{1+n})_{\Levelsetstd}$ is the ideal generated by $\metric+1$ and
$\Smooth(\Levelsetstd) \cong \Smooth(\CC^{1+n}) /
\Smooth(\CC^{1+n})_{\Levelsetstd}$
as $^*$\=/algebras. Moreover, restricted to
$\Smooth(\CC^{1+n})^{\group{U}(1)}$, the Poisson $^*$-subalgebra of
$\Smooth(\CC^{1+n})$ consisting of all $\group{U}(1)$-invariant
functions, the $^*$-ideal $\Smooth(\CC^{1+n})_{\Levelsetstd}$ is even
a Poisson $^*$-ideal, and consequently
$\Smooth(\CC^{1+n})^{\group{U}(1)} /
\big(\Smooth(\CC^{1+n})_{\Levelsetstd} \cap
\Smooth(\CC^{1+n})^{\group{U}(1)}\big)$
is even a Poisson $^*$\=/algebra and isomorphic to the
reduced Poisson $^*$\=/algebra $\Smooth(\Discstd)$. This isomorphism is
described by the classical reduction map:
\begin{definition}[Classical reduction map]
    \label{definition:ClassicalReductionMap}%
    Define the map
    $\ReductionMap{0}\colon \Smooth(\CC^{1+n})^{\group{U}(1)} \to
    \Smooth(\Discstd)$,
    \begin{equation}
        \label{eq:ClassicalReductionMap}
        a\mapsto \ReductionMap{0}(a)
        \quad
        \textrm{with}
        \quad
        \ReductionMap{0}(a)([r]) := a(r)
        \quad
        \textrm{for all}
        \quad
        [r]\in \Discstd.
    \end{equation}
\end{definition}
One can check that $\ReductionMap{0}$ is indeed a well-defined unital
and $\group{U}(1,n)$-equivariant
Poisson $^*$\=/homomorphism. Moreover, its kernel is clearly
$\Smooth(\CC^{1+n})_{\Levelsetstd} \cap
\Smooth(\CC^{1+n})^{\group{U}(1)}$,
and as every smooth $\group{U}(1)$-invariant function on
$\Levelsetstd$ can be extended to a smooth $\group{U}(1)$-invariant
function on $\CC^{1+n}$ (just take an arbitrary extension and make it
$\group{U}(1)$-invariant by averaging over the action of
$\group{U}(1)$), $\ReductionMap{0}$ is also surjective and thus
descends to an isomorphism from the quotient
$\Smooth(\CC^{1+n})^{\group{U}(1)} / \big(
\Smooth(\CC^{1+n})_{\Levelsetstd} \cap
\Smooth(\CC^{1+n})^{\group{U}(1)} \big)$ to $\Smooth(\Discstd)$.

However, as we are interested in non-formal deformation quantizations
of these Poisson algebras, which cannot be constructed on the whole
spaces $\Smooth(\CC^{1+n})$ and $\Smooth(\Discstd)$, we have to
restrict our attention to suitable subalgebras:
\begin{definition}[Spaces of analytic functions on $\CC^{1+n}$ and $\Discstd$]
    \label{definition:AnalyticFunctions}%
    We define
    \begin{equation}
        \label{eq:AnalyticObenUnten}
        \AnalyticOben
        :=
        \set{\hat{a}\circ \DiagOben}
        {\hat{a} \in \Holomorphic(\CC^{1+n}\times\CC^{1+n})}
        \quad
        \textrm{and}
        \quad
        \AnalyticUnten
        :=
        \set{\hat{a}\circ \DiagUnten}
        {\hat{a}\in\Holomorphic(\DiscDouble)}.
    \end{equation}
\end{definition}
Note that these are unital Poisson $^*$-subalgebras of
$\Smooth(\CC^{1+n})$ and $\Smooth(\Discstd)$, respectively, because
the coefficients of the Poisson tensors $\pi$ and $\piUnten$ with
respect to the standard charts are polynomials in the coordinate
functions and because
$(\hat{a}\circ \DiagOben)^* = \cc{\argument} \circ \hat{a} \circ
\involutionAll \circ \DiagOben$
as well as
$(\hat{b}\circ \DiagUnten)^* = \cc{\argument} \circ \hat{b} \circ
\involutionAll \circ \DiagUnten$
holds for all $\hat{a} \in \Holomorphic(\CC^{1+n}\times\CC^{1+n})$ and
$\hat{b} \in \Holomorphic(\DiscDouble)$, where
$\cc{\argument} \circ \hat{a} \circ \involutionAll$ and
$\cc{\argument} \circ \hat{b} \circ \involutionAll$ are compositions
of two anti-holomorphic and one holomorphic function, hence are
holomorphic.  Moreover, the pullback
$\DiagOben^*\colon \Holomorphic(\CC^{1+n} \times \CC^{1+n}) \to
\AnalyticOben$
is an isomorphism of vector spaces, because
$\frac{\partial \hat{a}}{\partial x^\mu} \circ \DiagOben =
\frac{\partial}{\partial z^\mu} (\hat{a} \circ \DiagOben)$
and
$\frac{\partial \hat{a}}{\partial y^\mu} \circ \DiagOben =
\frac{\partial}{\partial \cc{z}^\mu} (\hat{a} \circ \DiagOben)$
holds for all $\mu \in \{0, \ldots, n\}$ and
$\hat{a} \in \Holomorphic(\CC^{1+n} \times \CC^{1+n})$, so
$\hat{a} \circ \DiagOben = 0$ implies that all derivatives of
$\hat{a}$ vanish at all points in the image of $\DiagOben$, and thus
$\hat{a}=0$. Similarly, the pullback
$\DiagUnten^*\colon \Holomorphic(\DiscDouble) \to \AnalyticUnten$ is
an isomorphism as well. The inverse of these isomorphisms will simply
be denoted by $\hat{\argument}$, i.e. given $a \in \AnalyticOben$,
then $\hat{a}$ is the unique element in
$\Holomorphic(\CC^{1+n} \times \CC^{1+n})$ that fulfils
$a = \hat{a} \circ \DiagOben$, similarly for $\AnalyticUnten$. Note
that we have already used this notation for
$\metric = \metricDouble \circ \DiagOben \in \AnalyticOben$ in
\eqref{eq:DefmetricDouble}.

As $\AnalyticOben$ and $\AnalyticUnten$ are isomorphic to spaces of
holomorphic functions, they are Fr\'echet spaces with respect to the
topology of locally uniform convergence of the holomorphic
extensions. We will only need the topology of $\AnalyticUnten$:
\begin{definition}[Norms on $\AnalyticUnten$]
    \label{definition:topAnalyticUnten}%
    Let $K\subseteq \DiscDouble$ be compact, then define the seminorm
    $\seminormUnten{K}{\argument}$ on $\AnalyticUnten$ as
    \begin{equation}
        \label{eq:SupNormK}
        \seminormUnten{K}{a}
        :=
        \sup_{[p, q]\in K} \abs{\hat{a}\big([p, q]\big)}
    \end{equation}
    for all $a \in \AnalyticUnten$.
\end{definition}
It will be helpful to describe $\AnalyticOben$ and $\AnalyticUnten$ as
completions of algebras of polynomials:
\begin{definition}[Polynomials on $\CC^{1+n}$]
    \label{definition:PolynomialOben}%
    For all multiindices $P, Q \in \NN_0^{1+n}$ we define the monomial
    \begin{equation}
        \label{eq:MonomialPQ}
        \monomial{P}{Q}
        :=
        z^P\cc{z}^Q
        :=
        (z^0)^{P_0} \cdots (z^n)^{P_n}
        (\cc{z}^0)^{Q_0} \cdots (\cc{z}^n)^{Q_n}
        \in
        \AnalyticOben
    \end{equation}
    and write $\PolynomeOben$ for their span, i.e. for the space of
    polynomial functions.
\end{definition}
It is then clear that the polynomial functions form a dense unital
Poisson $^*$-subalgebra of $\AnalyticOben$.

Similarly like before, we denote the (closed) subspaces of
$\group{U}(1)$-invariant analytic functions and polynomials on $\CC^{1+n}$ by
$\AnalyticOben^{\group{U}(1)}$ and $\PolynomeOben[\group{U}(1)]$. Then
$\PolynomeOben[\group{U}(1)]$ is spanned by the
$\group{U}(1)$-invariant monomials $\monomial{P}{Q}$ with
$P, Q \in \NN_0^{1+n}$ and $\abs{P} = \abs{Q}$, and is dense in
$\AnalyticOben^{\group{U}(1)}$. This both is a consequence of the
observation that the Cauchy formula for reconstructing the Taylor
coefficient in front of $\monomial{P}{Q}$ by means of circular
integrals around the origin of $\CC^{1+n} \times \CC^{1+n}$ is
$\group{U}(1)$-invariant only if $\abs{P} = \abs{Q}$. Of course,
$\AnalyticOben^{\group{U}(1)}$ and $\PolynomeOben[\group{U}(1)]$ are
again unital Poisson $^*$-subalgebras of $\Smooth(\CC^{1+n})$. Using
the reduction map we can now also construct ``polynomials'' on
$\Discstd$:
\begin{definition}[Polynomials on $\Discstd$]
    \label{definition:PolynomialUnten}%
    We define
    $\monomialDown{P}{Q} := \ReductionMap{0}(\monomial{P}{Q})$ for all
    $P, Q \in \NN_0^{1+n}$ with $\abs{P} = \abs{Q}$ and write
    $\PolynomeUnten$ for the image of $\PolynomeOben[\group{U}(1)]$
    under $\ReductionMap{0}$, i.e. for the span of the functions
    $\monomialDown{P}{Q}$.
\end{definition}
Note that we will show in Theorem~\ref{theorem:Completion} that
$\PolynomeUnten$ is dense in $\AnalyticUnten$ with respect to its
Fr\'echet topology.
As $\ReductionMap{0}$ is not injective, we cannot expect the
monomials $\monomialDown{P}{Q}$ on $\Discstd$ to be a basis of
$\PolynomeUnten$. A suitable choice for a basis is the following
(see \cite[Lemma~4.20]{beiser.waldmann:2014a}):
\begin{definition}[Fundamental monomials on $\Discstd$]
    \label{definition:BasisMonomialsUnten}%
    For all $P, Q \in \NN_0^n$ we define the fundamental monomial
    \begin{equation}
        \monomialDownRed{P}{Q}
        :=
        \begin{cases}
            \monomialDown{(\abs{Q} - \abs{P}, P_1, \ldots, P_n)}
            {(0, Q_1, \ldots, Q_n)}
            &
            \textrm{if }
            \abs{Q} \ge \abs{P}
            \\
            \monomialDown{(0, P_1, \ldots, P_n)}
            {(\abs{P} - \abs{Q}, Q_1, \dots, Q_n)}
            &
            \text{if }
            \abs{Q} \le \abs{P}.
        \end{cases}
    \end{equation}
\end{definition}
Note that, with respect to the coordinates of the standard chart
$\chartstd = (w^1, \ldots, w^n)^\Transpose$, the monomials on
$\Discstd$ are represented as
\begin{equation}
    \label{eq:monomialDownStandard}
    \monomialDown{P}{Q}
    =
    \frac{
      (w^1)^{P_1} \cdots (w^n)^{P_n}
      (\cc{w}^1)^{Q_1} \cdots (\cc{w}^n)^{Q_n}
    }{
      (1 - w \cdot \cc{w})^{\abs{P}}
    }
\end{equation}
for all $P, Q \in \NN_0^{1+n}$ with $\abs{P} = \abs{Q}$. In
particular,
\begin{equation}
    \label{eq:monomialDownRedStandard}
    \monomialDownRed{P}{Q}
    =
    \frac{
      (w^1)^{P_1} \cdots (w^n)^{P_n}
      (\cc{w}^1)^{Q_1} \cdots (\cc{w}^n)^{Q_n}
    }{
      (1 - w \cdot \cc{w})^{\max\{\abs{P}, \abs{Q}\}}
    }
    =
    \frac{
      w^P\cc{w}^Q
    }{
      (1 - w \cdot \cc{w})^{\max\{\abs{P},\abs{Q}\}}
    }
\end{equation}
for all $P, Q \in \NN_0^n$. Using this one can already show that the
$\monomialDownRed{P}{Q}$ are linearly independent, and they span
$\PolynomeUnten$ because every $\monomialDown{P}{Q}$ with
$P, Q \in \NN_0^{1+n}$ and $\abs{P} = \abs{Q}$ can be rewritten as
\begin{equation}
    \label{eq:basisdecompositionUnten}
    \monomialDown{P}{Q}
    =
    \sum_{\substack{T\in\NN_0^n \\ \abs{T}\le \min\{P_0,Q_0\}}}
    \binom{\min\{P_0,Q_0\}}{\abs{T}}
    \frac{\abs{T}!}{T!}
    \monomialDownRed{P'+T}{Q'+T},
\end{equation}
where $P' = (P_1, \ldots, P_n) \in \NN_0^n$ and analogously for $Q$.
So we get (\cite[Lemma~4.20]{beiser.waldmann:2014a}):
\begin{proposition}
    \label{proposition:TheBasis}%
    The fundamental monomials on $\Discstd$ form a basis of
    $\PolynomeUnten$.
\end{proposition}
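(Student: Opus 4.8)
The plan is to show two things: that the fundamental monomials $\monomialDownRed{P}{Q}$ with $P,Q\in\NN_0^n$ are linearly independent, and that they span $\PolynomeUnten$. Both facts are already essentially prepared by the material immediately preceding the statement, so the proof should be a matter of assembling those pieces cleanly rather than inventing new machinery.

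For the \emph{spanning} part, I would argue as follows. By \refitem{definition:PolynomialUnten} the space $\PolynomeUnten$ is by definition the span of the $\monomialDown{P}{Q}$ with $P,Q\in\NN_0^{1+n}$ and $\abs{P}=\abs{Q}$. Hence it suffices to express each such $\monomialDown{P}{Q}$ as a finite linear combination of fundamental monomials. This is exactly what the expansion \eqref{eq:basisdecompositionUnten} provides: writing $P=(P_0,P')$ and $Q=(Q_0,Q')$ with $P',Q'\in\NN_0^n$, that identity rewrites $\monomialDown{P}{Q}$ as a finite sum of $\monomialDownRed{P'+T}{Q'+T}$ with explicit binomial coefficients. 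Thus every generator of $\PolynomeUnten$ lies in the linear span of the $\monomialDownRed{P}{Q}$, giving one inclusion; the reverse inclusion is immediate since each $\monomialDownRed{P}{Q}$ is by \refitem{definition:BasisMonomialsUnten} itself one of the $\monomialDown{\argument}{\argument}$, hence already in $\PolynomeUnten$. The only routine check here is that \eqref{eq:basisdecompositionUnten} indeed holds, which follows by expanding the $z^0,\cc z^0$ factors using the level-set relation $\abs{z^0}^2=1+\sum_i\abs{z^i}^2$ underlying \eqref{eq:monomialDownStandard} and \eqref{eq:monomialDownRedStandard}.

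For \emph{linear independence}, the cleanest route is to pass to the standard chart $\chartstd=(w^1,\dots,w^n)^\Transpose$ and use the explicit formula \eqref{eq:monomialDownRedStandard}, which represents
\begin{equation*}
  \monomialDownRed{P}{Q}
  =
  \frac{w^P\cc{w}^Q}{(1-w\cdot\cc{w})^{\max\{\abs{P},\abs{Q}\}}}.
\end{equation*}
Suppose a finite combination $\sum_{P,Q} c_{P,Q}\,\monomialDownRed{P}{Q}$ vanishes identically on $\DD_n$. Clearing the common denominator against the largest occurring exponent $N=\max\{\abs{P},\abs{Q}\}$ turns this into the vanishing on $\DD_n$ of an expression of the form $\sum_{P,Q}c_{P,Q}\,w^P\cc{w}^Q\,(1-w\cdot\cc w)^{N-\max\{\abs{P},\abs{Q}\}}$, which is a genuine polynomial in the $2n$ real-independent quantities $w^1,\dots,w^n,\cc{w}^1,\dots,\cc{w}^n$. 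Since the monomials $w^P\cc{w}^Q$ are linearly independent as functions on an open subset of $\CC^n$, one reads off the coefficients $c_{P,Q}$ by comparing terms of fixed bidegree, the factor $(1-w\cdot\cc w)^{\argument}$ only mixing pairs whose $(\abs{P},\abs{Q})$ differ in a controlled, triangular way.

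The main obstacle is precisely this last disentangling: because the exponent $\max\{\abs{P},\abs{Q}\}$ depends on $(P,Q)$, clearing denominators introduces extra factors $(1-w\cdot\cc w)^{N-\max\{\abs{P},\abs{Q}\}}$ that spread a single $\monomialDownRed{P}{Q}$ across several bidegrees $w^{P+S}\cc{w}^{Q+S}$. I would handle this by ordering the indices according to $\max\{\abs{P},\abs{Q}\}$ and arguing by induction, or equivalently by observing that for fixed $(\abs{P},\abs{Q})$ the top-degree part (the $S=0$ term) already isolates $c_{P,Q}$; the triangular structure then lets one peel off coefficients successively. Since all of this takes place in a single holomorphic chart and the relation \eqref{eq:basisdecompositionUnten} is explicit, no deeper input is needed, and the result is exactly \cite[Lemma~4.20]{beiser.waldmann:2014a} as cited.
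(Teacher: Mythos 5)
Your proposal is correct and follows essentially the same route as the paper, which likewise obtains spanning from the decomposition \eqref{eq:basisdecompositionUnten} and linear independence from the chart representation \eqref{eq:monomialDownRedStandard}, deferring details to \cite[Lemma~4.20]{beiser.waldmann:2014a}. One small remark: after clearing denominators the $S=0$ term of $\monomialDownRed{P}{Q}$ is the \emph{lowest}-degree contribution rather than the top one, so the peeling-off must start at the indices with minimal $\max\{\abs{P},\abs{Q}\}$ --- which is exactly the ordering by $\max\{\abs{P},\abs{Q}\}$ that you propose, so the argument goes through.
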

Note that the product of two fundamental monomials on $\Discstd$ is
more complicated than the product of monomials on $\RR^n$: Given
$\monomialDownRed{P}{Q}, \monomialDownRed{R}{S}$ with
$P, Q, R, S \in \NN_0^n$, then
$\monomialDownRed{P}{Q}\,\monomialDownRed{R}{S} =
\monomialDownRed{P+R}{Q+S}$
holds only in the cases that $\abs{P} \ge \abs{Q}$ and
$\abs{R} \ge \abs{S}$ or that $\abs{P} \le \abs{Q}$ and
$\abs{R} \le \abs{S}$. If $\abs{P} \ge \abs{Q}$ and
$\abs{R} \le \abs{S}$ then
\begin{equation}
    \label{eq:classicalproductUnten}
    \monomialDownRed{P}{Q}\,\monomialDownRed{R}{S}
    =
    \sum_{
      \substack{
        T\in\NN_0^n
        \\
        \abs{T}\le \min\{\abs{S}-\abs{R},\abs{P}-\abs{Q}\}
      }
    }
    \binom{\min\{\abs{S}-\abs{R},\abs{P}-\abs{Q}\}}{\abs{T}}
    \frac{\abs{T}!}{T!}
    \monomialDownRed{P+R+T}{Q+S+T},
\end{equation}
and similarly if $\abs{P} \le \abs{Q}$ and $\abs{R} \ge \abs{S}$. This
especially shows that identifying $\monomialDownRed{P}{Q}$ with a
monomial $z^P\cc{z}^Q$ on $\CC^n$ does not extend to an isomorphism of algebras.

The unital Poisson-$^*$\=/algebras $\PolynomeOben$ and $\PolynomeOben[\group{U}(1)]$
are of course graded by the degree of polynomials. However, the
induced filtration will be even more important in the following,
because it remains well-defined after reduction to $\PolynomeUnten$
and will also be respected by the deformed product:
\begin{definition}[Filtration on polynomials]
    \label{definition:filtration}%
    For all $m \in \NN_0$ we define $\PolynomeOben[\group{U}(1), (m)]$
    as the space of $\group{U}(1)$-invariant polynomials of up to
    degree $2m$, i.e. as the span of $\monomial{P}{Q}$ for all
    $P, Q \in \NN_0$ with $\abs{P} = \abs{Q} \le m$.  Similarly,
    $\PolynomeUnten[(m)]$ is defined as the image of
    $\PolynomeOben[\group{U}(1), (m)]$ under $\ReductionMap{0}$,
    i.e. as the span of $\monomialDown{P}{Q}$ for all $P, Q \in \NN_0$
    with $\abs{P} = \abs{Q} \le m$.
\end{definition}
Similarly to \cite[Lemma~4.18]{beiser.waldmann:2014a}, we get:
\begin{proposition}
    \label{proposition:classicalredkernel}%
    For all $m \in \NN_0$ the following holds:
    \begin{propositionlist}
    \item
        $\dim \PolynomeOben[\group{U}(1), (m)] = \sum_{k=0}^m
        \binom{n+k}{k}^2$.
    \item $\dim \PolynomeUnten[(m)] = \binom{n+m}{m}^2$.
    \end{propositionlist}
    Moreover, the kernel of the restriction of $\ReductionMap{0}$ to
    $\PolynomeOben[\group{U}(1)]$ is the ideal in
    $\PolynomeOben[\group{U}(1)]$ generated by $\metric + 1$, i.e.
    \begin{equation}
        \label{eq:KernelRestrictionMapOnPolynomials}
        \ker \ReductionMap{0} \cap \PolynomeOben[\group{U}(1)]
        =
        \set[\big]{(\metric + 1)a}
        {a \in \PolynomeOben[\group{U}(1)]}.
    \end{equation}
\end{proposition}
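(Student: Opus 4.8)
The plan is to obtain the two dimension formulas by counting basis monomials and then to deduce the kernel statement from a rank--nullity argument together with injectivity of multiplication by $\metric+1$. The first formula is immediate: by Definition~\ref{definition:filtration} the space $\PolynomeOben[\group{U}(1),(m)]$ is spanned by the monomials $\monomial{P}{Q}=z^P\cc z^Q$ with $P,Q\in\NN_0^{1+n}$ and $\abs P=\abs Q\le m$, and these are linearly independent, being distinct monomials in the independent indeterminates $z^0,\dots,z^n,\cc z^0,\dots,\cc z^n$. For each fixed $k=\abs P=\abs Q$ there are $\binom{n+k}{k}$ multiindices of each type, namely the number of weak compositions of $k$ into $1+n$ parts, giving $\binom{n+k}{k}^2$ monomials; summing over $k=0,\dots,m$ yields $\sum_{k=0}^m\binom{n+k}{k}^2$.

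For the second formula I would pin down exactly which fundamental monomials span $\PolynomeUnten[(m)]$. By Proposition~\ref{proposition:TheBasis} the $\monomialDownRed{P}{Q}$ with $P,Q\in\NN_0^n$ form a basis of $\PolynomeUnten$, so it suffices to identify those occurring in $\PolynomeUnten[(m)]=\ReductionMap{0}\big(\PolynomeOben[\group{U}(1),(m)]\big)$. Using the expansion \eqref{eq:basisdecompositionUnten}, a generator $\monomialDown{P}{Q}$ with $\abs P=\abs Q=k\le m$ is a combination of fundamental monomials $\monomialDownRed{P'+T}{Q'+T}$ with $\abs T\le\min\{P_0,Q_0\}$ and $P'=(P_1,\dots,P_n)$, $Q'=(Q_1,\dots,Q_n)$; since $\max\{\abs{P'+T},\abs{Q'+T}\}=\max\{\abs{P'},\abs{Q'}\}+\abs T\le k\le m$, only fundamental monomials with $\max\{\abs P,\abs Q\}\le m$ can appear. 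Conversely, each such fundamental monomial equals a generator $\monomialDown{\tilde P}{\tilde Q}$ with $\abs{\tilde P}=\abs{\tilde Q}=\max\{\abs P,\abs Q\}\le m$ by its very definition, hence already lies in $\PolynomeUnten[(m)]$. Thus $\PolynomeUnten[(m)]$ is precisely the span of the $\monomialDownRed{P}{Q}$ with $\abs P\le m$ and $\abs Q\le m$, and counting the $\binom{n+m}{m}$ choices for each of $P,Q\in\NN_0^n$ gives $\binom{n+m}{m}^2$.

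For the kernel formula the inclusion ``$\supseteq$'' is clear: $\metric+1$ vanishes on $\Levelsetstd$, so $(\metric+1)a$ does too and hence lies in $\ker\ReductionMap{0}$, while $\group{U}(1)$-invariance of $\metric+1$ keeps the ideal inside $\PolynomeOben[\group{U}(1)]$. For the reverse inclusion I would argue filtration degree by filtration degree. Restricting $\ReductionMap{0}$ to $\PolynomeOben[\group{U}(1),(m)]$, which maps onto $\PolynomeUnten[(m)]$ by definition, rank--nullity together with the two formulas above gives $\dim\big(\ker\ReductionMap{0}\cap\PolynomeOben[\group{U}(1),(m)]\big)=\sum_{k=0}^m\binom{n+k}{k}^2-\binom{n+m}{m}^2=\sum_{k=0}^{m-1}\binom{n+k}{k}^2$. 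On the other hand, multiplication by the nonzero element $\metric+1$ is injective on the integral domain $\PolynomeOben$, and since its leading term $\metric$ has bidegree $(1,1)$ it cannot cancel, so $(\metric+1)b\in\PolynomeOben[\group{U}(1),(m)]$ forces $b\in\PolynomeOben[\group{U}(1),(m-1)]$; hence the part of the ideal in filtration degree $m$ is exactly $(\metric+1)\PolynomeOben[\group{U}(1),(m-1)]$, which has dimension $\dim\PolynomeOben[\group{U}(1),(m-1)]=\sum_{k=0}^{m-1}\binom{n+k}{k}^2$. This subspace of the kernel therefore has the same dimension as the whole kernel in filtration degree $m$, so the two coincide, and taking the union over all $m\in\NN_0$ yields the claim.

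The main obstacle I anticipate is the combinatorial bookkeeping in the second formula: one must verify that \eqref{eq:basisdecompositionUnten} produces \emph{precisely} the fundamental monomials satisfying $\max\{\abs P,\abs Q\}\le m$, with the converse inclusion ensuring that the span is cut out cleanly by the constraints $\abs P\le m$ and $\abs Q\le m$. Once this index set is correctly identified, the remaining two counting identities and the leading-term argument for injectivity are routine.
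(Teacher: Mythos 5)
Your proposal is correct and follows essentially the same route as the paper: establish the two dimension formulas by counting (fundamental) monomials and then identify the kernel with the ideal by comparing dimensions filtration level by filtration level, using that multiplication by $\metric+1$ is injective and raises the bidegree by exactly $(1,1)$. The only cosmetic differences are that you replace the paper's induction on a dimension inequality by a direct rank--nullity computation, and that you spell out explicitly (via \eqref{eq:basisdecompositionUnten}) why $\PolynomeUnten[(m)]$ is spanned by the $\monomialDownRed{P}{Q}$ with $\max\{\abs{P},\abs{Q}\}\le m$, a point the paper leaves implicit.
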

\begin{proof}
    Given $k \in \NN_0$ and $\ell \in \NN$ then the set
    $\set[\big]{P \in \NN_0^{\ell}}{\abs{P} = k}$ has
    $\binom{\ell-1+k}{k}$ elements. From this one can easily deduce
    the first dimension formula and also the second because
    \begin{equation*}
        \dim \PolynomeUnten[(m)]
        =
        \bigg(\sum_{k=0}^m \binom{n-1+k}{k}\bigg)^2
        =
        \binom{n+m}{m}^2.
    \end{equation*}
    Moreover, it is easy to see that the $^*$-ideal in
    $\PolynomeOben[\group{U}(1)]$ that is generated by $\metric+1$ is
    in the kernel of $\ReductionMap{0}$ and in order to show that it
    is the whole of
    $\ker \ReductionMap{0} \cap \PolynomeOben[\group{U}(1)]$ it is
    sufficient to show for all $m \in \NN_0$ that
    $\ker \ReductionMap{0} \cap \PolynomeOben[\group{U}(1), (m)] =
    \set{(\metric+1)a}{a \in \PolynomeOben[\group{U}(1), (m)]}$, or
    \begin{equation*}
        \dim \PolynomeOben[\group{U}(1), (m)]
        -
        \dim \PolynomeUnten[(m)]
        \le
        \dim \Big(
        \set[\big]{(\metric+1)a}
        {a\in\PolynomeOben[\group{U}(1)]}
        \cap
        \PolynomeOben[\group{U}(1), (m)]
        \Big)
    \end{equation*}
    Due to the above dimension formulas, the left hand side of this
    reduces to $\sum_{k=0}^{m-1} \binom{n+k}{k}^2$.  In the case that
    $m=0$, this inequality is certainly true. But if it holds for one
    $m \in \NN_0$ then also for $m+1$ because the span of all
    $(\metric + 1) \monomial{P}{Q}$ with $P, Q \in \NN_0$ and
    $\abs{P} = \abs{Q}=m$ has dimension $\binom{n+m}{m}^2$ and is a
    subspace of
    $\set{(\metric+1)a}{a \in \PolynomeOben[\group{U}(1)]} \cap
    \PolynomeOben[\group{U}(1), (m+1)]$
    that has trivial intersection with
    $\PolynomeOben[\group{U}(1), (m)]$.
\end{proof}
So the algebraic description of the reduction stays the same for the
polynnomials, i.e. $\PolynomeUnten$ is isomorphic as a Poisson-$^*$\=/algebra to
the quotient of $\PolynomeOben[\group{U}(1)]$ over the ideal generated by $\metric+1$.

%
%

\subsection{The Deformed Quantum Algebra}
\label{subsection:quantum}
Analogously to the Marsden-Weinstein reduction in the classical case,
the star product on the Poincar\'e disc can be constructed by quantum
reduction: In order to obtain a formal deformation quantisation of
$\Discstd$, one can start with the Wick star product on $\CC^{1+n}$
given by
\begin{equation}
    \label{eq:StarWick}
    a \starWick b
    :=
    \sum_{t=0}^\infty
    \frac{(2\hbar)^t}{t!}
    \sum_{i_1, \ldots, i_t, j_1, \ldots, j_t = 0}^n
    h^{i_1j_1} \cdots h^{i_tj_t}
    \frac{\partial^t a}
    {\partial z^{i_1} \cdots \partial z^{i_t}}
    \frac{\partial^t b}
    {\partial \cc{z}^{j_1} \cdots \partial \cc{z}^{j_t}}
\end{equation}
for all $a, b \in \Smooth(\CC^{1+n})\formalPS{\hbar}$, which is a
$\CC\formalPS{\hbar}$-bilinear associative multiplication and
compatible with the $^*$-involution of pointwise complex conjugation,
i.e.  $(a \starWick b)^* = b^* \starWick a^*$ holds for all
$a, b \in \Smooth(\CC^{1+n})\formalPS{\hbar}$.  Its commutator yields
the classical Poisson bracket up to terms of higher order,
$\frac{\I}{\hbar} \kom{a}{b}_{\starWick} = \poi{a}{b} + \hbar \cdots$.
Note that $\MMup$ is not only a classical moment map but even a
quantum moment map, i.e.
$\frac{\I}{\hbar} \kom{a}{\MMup(u)}_{\starWick} = \poi{a}{\MMup(u)} =
a\racts u$
for all $u \in \lie{u}(1, n)$, because in commutators with
$\MMup(\argument)$, only the first order in $\hbar$ contributes due to
the linearity of the moment map in the $z$- and
$\cc{z}$-coordinates. Analogously to the classical Poisson bracket,
the Wick star product is also $\group{U}(1, n)$-equivariant, i.e.
$(a \racts U) \starWick (b \racts U) = (a \starWick b) \racts U$ holds
for all $a, b \in \Smooth(\CC^{1+n})\formalPS{\hbar}$ and
$U\in\group{U}(1, n)$. The reduced star product algebra on $\Discstd$
can then be obtained from the one on $\CC^{1+n}$ by restriction to the
subalgebra of $\group{U}(1)$-invariant elements in
$\Smooth(\CC^{1+n})\formalPS{\hbar}$ and dividing out the ideal
generated by $\metric + 1$.

On $\PolynomeOben$, the Wick star
product converges trivially for every $\hbar \in \CC$ which yields an
associative product $\starWick[\hbar]$ that fulfils
$(a\starWick[\hbar] b)^* = b^* \starWick[\,\cc{\hbar}\,] a^*$, hence
$\big(\PolynomeOben, \starWick[\hbar], \argument^*\big)$ is a
$^*$\=/algebra for all $\hbar \in \RR$. On the basis of $\PolynomeOben$
given by the monomials $\monomial{P}{Q}$, the Wick star product can be
expressed as
\begin{equation}
  \monomial{P}{Q}\starWick[\hbar]\monomial{R}{S}
  =
  \sum_{T=0}^{\min\{P,S\}}
  (-1)^{T_0} (2\hbar)^{|T|} T! \binom{P}{T}\binom{S}{T}
  \monomial{P+R-T}{Q+S-T}
  \label{eq:starwick}
\end{equation}
for all $P, Q, R, S \in \NN_0^{1+n}$. As the classical $^*$-ideal
generated by $\metric+1$ in $\PolynomeOben[\group{U}(1)]$, i.e. the
kernel of $\ReductionMap{0}$, is no longer an ideal with respect to
the Wick star product, one has to perform an equivalence
transformation first that assures that the star product with $\metric$
is the classical product, see
\cite{bordemann.brischle.emmrich.waldmann:1996a,
  beiser.waldmann:2014a}, and can then restrict to functions on
$\Discstd$. This procedure results in the following deformed reduction
map:
\begin{definition}[Deformed reduction map]
    \label{definition:DeformedRestrictionMap}%
    Let $\HbarDef := \CC_*\backslash\set[]{-1/(2m)}{m \in \NN}$ and
    define for all $\hbar \in \HbarDef$ the deformed reduction map
    $\ReductionMap{\hbar}\colon \PolynomeOben[\group{U}(1)] \to
    \PolynomeUnten$ by linear extension of
    \begin{equation}
        \label{eq:ReductionMapHbar}
        \ReductionMap{\hbar}(\monomial{P}{Q})
        :=
        (2\hbar)^{\abs{P}}
        \bigg(\frac{1}{2\hbar}\bigg)_{\abs{P}}
        \ReductionMap{0}(\monomial{P}{Q})
        =
        (2\hbar)^{\abs{P}}
        \bigg(\frac{1}{2\hbar}\bigg)_{\abs{P}}
        \monomialDown{P}{Q}
    \end{equation}
    for all $P, Q \in \NN_0$ with $\abs{P} = \abs{Q}$, where $(z)_m$
    denotes the Pochhammer symbol, or rising factorial,
    \begin{equation}
        \label{eq:Pochhammer}
        (z)_m := \prod_{k=0}^{m-1} (z+k)
    \end{equation}
    for all $z \in \CC$ and $m \in \NN_0$.
\end{definition}
\begin{proposition}
    \label{proposition:quantumredkernel}%
    For all $\hbar \in \HbarDef$ the kernel of the deformed reduction
    map $\ReductionMap{\hbar}$ is the $^*$-ideal generated by
    $\metric + 1$ with respect to the Wick product $\starWick[\hbar]$
    on $\PolynomeOben[\group{U}(1)]$.
\end{proposition}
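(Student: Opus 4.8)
My plan is to prove the two inclusions separately: the inclusion of the $^*$-ideal into $\ker \ReductionMap{\hbar}$ by a direct computation, and equality by a dimension count along the filtration, closely mirroring the proof of Proposition~\ref{proposition:classicalredkernel}. The computational heart is an explicit formula for the Wick product with $\metric$. Writing $\metric = \sum_{\nu=0}^n h_{\nu\nu}\,\monomial{e_\nu}{e_\nu}$ with $e_\nu$ the $\nu$-th unit multiindex and feeding this into \eqref{eq:starwick}, only the contractions $T=0$ and $T=e_\nu$ survive, and using the pleasant coincidence $h_{\nu\nu}(-1)^{\delta_{\nu 0}}=1$ for every $\nu$ I would obtain
\begin{equation*}
    \metric \starWick[\hbar] \monomial{P}{Q}
    =
    \monomial{P}{Q} \starWick[\hbar] \metric
    =
    \metric \cdot \monomial{P}{Q} + 2\hbar\abs{P}\,\monomial{P}{Q}
\end{equation*}
for all $P, Q \in \NN_0^{1+n}$ with $\abs{P}=\abs{Q}$, where $\metric \cdot \monomial{P}{Q}$ denotes the pointwise product. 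In particular $\metric$ is $\starWick[\hbar]$-central on $\PolynomeOben[\group{U}(1)]$, which one may alternatively read off from the quantum moment map identity $\frac{\I}{\hbar}\kom{a}{\MMup(u)}_{\starWick}=a\racts u$ applied to $u=2\I\Unit_{1+n}$ and $\group{U}(1)$-invariant $a$. Since moreover $\metric+1$ is self-adjoint and $\starWick[\hbar]$ preserves $\group{U}(1)$-invariance (by $\group{U}(1,n)$-equivariance), the $^*$-ideal it generates is simply $(\metric+1)\starWick[\hbar]\PolynomeOben[\group{U}(1)]$.

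Next I would test this left ideal against $\ReductionMap{\hbar}$. Abbreviating the weight as $c_m := (2\hbar)^m\big(\tfrac{1}{2\hbar}\big)_m = \prod_{k=0}^{m-1}(1+2k\hbar)$, so that $\ReductionMap{\hbar}(\monomial{P}{Q})=c_{\abs{P}}\,\monomialDown{P}{Q}$ and $c_{m+1}=(1+2m\hbar)c_m$, and using that $\ReductionMap{0}$ is a $^*$-homomorphism with $\ReductionMap{0}(\metric)=-1$ (as $\metric=-1$ on $\Levelsetstd$), the two contributions of
\begin{equation*}
    (\metric+1)\starWick[\hbar]\monomial{P}{Q}
    =
    \metric\cdot\monomial{P}{Q} + (1+2\hbar\abs{P})\,\monomial{P}{Q}
\end{equation*}
cancel:
\begin{equation*}
    \ReductionMap{\hbar}\big((\metric+1)\starWick[\hbar]\monomial{P}{Q}\big)
    =
    \big(-c_{\abs{P}+1} + (1+2\hbar\abs{P})\,c_{\abs{P}}\big)\monomialDown{P}{Q}
    =
    0.
\end{equation*}
By linearity and centrality this yields $(\metric+1)\starWick[\hbar]\PolynomeOben[\group{U}(1)] \subseteq \ker\ReductionMap{\hbar}$. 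This telescoping is exactly what the Pochhammer weights defining $\ReductionMap{\hbar}$ are engineered to produce, and I expect it to be the main obstacle; everything else is dimension bookkeeping.

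Finally I would match dimensions along the filtration. For $\hbar\in\HbarDef$ all $c_m\neq 0$, so $\ReductionMap{\hbar}$ maps $\PolynomeOben[\group{U}(1),(m)]$ onto $\PolynomeUnten[(m)]$; together with the dimension formulas of Proposition~\ref{proposition:classicalredkernel} this gives $\dim\big(\ker\ReductionMap{\hbar}\cap\PolynomeOben[\group{U}(1),(m)]\big)=\dim\PolynomeOben[\group{U}(1),(m)]-\dim\PolynomeUnten[(m)]=\sum_{k=0}^{m-1}\binom{n+k}{k}^2$. On the other hand, the explicit product formula shows that $a\mapsto(\metric+1)\starWick[\hbar]a$ raises the filtration degree by at most one, hence maps $\PolynomeOben[\group{U}(1),(m-1)]$ into $\PolynomeOben[\group{U}(1),(m)]$, and it is injective because the top-bidegree part of $(\metric+1)\starWick[\hbar]a$ is $\metric$ times the top part of $a$, while multiplication by the nonzero polynomial $\metric$ is injective in the integral domain of polynomials. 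Its image thus has dimension $\dim\PolynomeOben[\group{U}(1),(m-1)]=\sum_{k=0}^{m-1}\binom{n+k}{k}^2$ and is caught between
\begin{equation*}
    (\metric+1)\starWick[\hbar]\PolynomeOben[\group{U}(1),(m-1)]
    \subseteq
    \langle\metric+1\rangle_{\starWick[\hbar]}\cap\PolynomeOben[\group{U}(1),(m)]
    \subseteq
    \ker\ReductionMap{\hbar}\cap\PolynomeOben[\group{U}(1),(m)].
\end{equation*}
Since the outer terms have equal dimension, all three coincide at every level $m$, and taking the union over $m$ yields $\langle\metric+1\rangle_{\starWick[\hbar]}=\ker\ReductionMap{\hbar}$, as claimed.
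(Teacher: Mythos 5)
Your proposal is correct and follows essentially the same route as the paper: verify via the explicit formula \eqref{eq:starwick} that $(\metric+1)\starWick[\hbar]\monomial{P}{Q}=(\metric+1+2\hbar\abs{P})\monomial{P}{Q}$ lies in $\ker\ReductionMap{\hbar}$ through the telescoping Pochhammer identity, and then obtain the reverse inclusion by the filtration dimension count of Proposition~\ref{proposition:classicalredkernel}. You merely spell out the details the paper leaves implicit (centrality of $\metric$ on the $\group{U}(1)$-invariants and injectivity of left $\starWick[\hbar]$-multiplication by $\metric+1$ via its leading term), and these details are all correct.
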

\begin{proof}
    Using the explicit formula \eqref{eq:starwick} one can check that
    indeed
    \begin{equation*}
        \monomial{P}{Q}\starWick[\hbar](\metric+1)
        =
        (\metric+1)\starWick[\hbar]\monomial{P}{Q}
        =
        (\metric+1+2\hbar\abs{P})\monomial{P}{Q}
    \end{equation*}
    is in the kernel of $\ReductionMap{\hbar}$ for all
    $P, Q \in \NN_0^{1+n}$ with $\abs{P} = \abs{Q}$, because
    \begin{equation*}
        \ReductionMap{\hbar}
        \big((\metric+1+2\hbar\abs{P})\monomial{P}{Q}\big)
        =
        (2\hbar)^{\abs{P}+1}
        \bigg(\frac{1}{2\hbar}\bigg)_{\abs{P}+1}
        \ReductionMap{0}\big((\metric+1)\monomial{P}{Q}\big)
        =
        0.
    \end{equation*}
    So the $^*$-ideal generated by $\metric+1$ with respect to the
    Wick product on $\PolynomeOben[\group{U}(1)]$ is in the kernel of
    $\ReductionMap{\hbar}$. Conversely, in order to show that this is
    indeed the whole kernel of $\ReductionMap{\hbar}$ one can use the
    same argument as in the proof of
    Proposition~\ref{proposition:classicalredkernel} and count
    dimensions.
\end{proof}
As a consequence, the following product
on $\PolynomeUnten$ is indeed well-defined and associative:
\begin{definition}[Reduced non-formal star product on $\Discstd$]
    \label{definition:starred}%
    For all $\hbar\in\HbarDef$ we define the product
    $\starRed[\hbar]\colon \PolynomeUnten \times \PolynomeUnten \to
    \PolynomeUnten$ as
    \begin{equation}
        a\starRed[\hbar] b := a'\starWick[\hbar] b',
    \end{equation}
    where $a', b' \in \PolynomeOben[\group{U}(1)]$ are arbitrary
    preimages of $a$ and $b$ under $\ReductionMap{\hbar}$.
\end{definition}
Note also that
$(a\starRed[\hbar] b) = a^* \starRed[\,\cc{\hbar}\,] b^*$ holds for
all $\hbar \in \HbarDef$ and $a, b \in \PolynomeUnten$, so pointwise
complex conjugation is a $^*$-involution for $\starRed[\hbar]$ if
$\hbar \in \HbarDef\cap \RR$. An explicit formula for
$\starRed[\hbar]$ on the monomials on $\Discstd$ is
\begin{equation}
  \label{eq:starred}
  \monomialDown{P}{Q} \starRed[\hbar] \monomialDown{R}{S}
  =
  \sum_{T=0}^{\min\{P,S\}}
  (-1)^{T_0}
  \frac{
      (\frac{1}{2\hbar})_{\abs{P+S-T}} T!
    }{
      (\frac{1}{2\hbar})_{\abs{P}}
      (\frac{1}{2\hbar})_{\abs{S}}
  }
  \binom{P}{T}\binom{S}{T}
  \monomialDown{P+R-T}{Q+S-T}
\end{equation}
for all $P, Q, R, S \in \NN_0^{1+n}$ with $\abs{P} = \abs{Q}$ and
$\abs{R} = \abs{S}$. Moreover, as the Wick star product on $\CC^{1+n}$
and the deformes reduction map are $\group{U}(1,n)$-equivariant,
the reduced star product $\starRed[\hbar]$ is also $\group{U}(1,n)$-equivariant.

%
%

\section{The Construction of the Fr\'echet Algebra}
\label{sec:ConstructionFrechetAlgebra}

In this section we now construct a topology for the
algebra $\PolynomeUnten$ for which the star product becomes
continuous. This will allow to complete the polynomial functions to a
Fr\'echet $^*$\=/algebra. In a second step we show that this (abstract)
completion is still a space of functions on the disc by noting that
the evaluation functionals are continuous. Then the size of this
completion is determined by establishing a bijection to the space
$\AnalyticUnten$, i.e.~those real-analytic functions on the disc
$\Discstd$ which arise as  ``diagonal'' restrictions of holomorphic
functions on $\DiscDouble$.

In \cite[Thm.~4.21, (viii)]{beiser.waldmann:2014a} a topology was
constructed for which the star product is also continuous. However,
the topology we discuss here is slightly coarser which is ultimately
the reason that we are able to determine the completion explicitly in
geometric terms.

%
%

\subsection{The Topology}
\label{subsection:topology}
By constructing a locally convex topology on $\PolynomeUnten$ under
which $\starRed[\hbar]$ is continuous, we can extend the star product
to the completion of $\PolynomeUnten$. A well-behaved topology on
$\PolynomeOben$ has already been examined in \cite{waldmann:2014a,
  schoetz.waldmann:2018a}. Transferring these results to the Poincar\'e
disc is straightforward:
\begin{definition}[Norms on $\PolynomeOben$]
    \label{definition:NormsOben}%
    For all $\rho > 0$ we define the norm
    \begin{equation}
        \label{eq:NormOben}
        \seminormOben[\bigg]{\rho}{
          \sum_{P,Q\in \NN_0^{1+n}}
          a_{P,Q} \monomial{P}{Q}
        }
        :=
        \sum_{P,Q\in \NN_0^{1+n}}
        \abs{a_{P,Q}} \rho^{\abs{P+Q}} \sqrt{\abs{P+Q}!}
    \end{equation}
    on $\PolynomeOben$, where $a_{P,Q} \in \CC$ for all
    $P, Q \in \NN_0^{1+n}$.
\end{definition}
Note that the locally convex topology defined by all these norms
on $\PolynomeOben$ is the same as the one in
\cite{schoetz.waldmann:2018a} (even though we are using a different
fundamental system of continuous seminorms here), if one identifies
$\PolynomeOben$ with the symmetric tensor algebra over
$\CC^{1+n}\times\CC^{1+n}$. Then \cite[Prop.~2.11 and
Lemma~2.12]{schoetz.waldmann:2018a} prove the continuity and absolute
convergence of $\starWick[\hbar]$:
\begin{lemma}
    \label{lemma:wickcont}%
    For every compact $K\subseteq \CC$ and every $\rho > 0$ there
    exist $C, \rho' > 0$ such that the estimate
    \begin{equation}
        \label{eq:ContinuityOfWickOben}
        \seminormOben{\rho}{a\starWick[\hbar]b}
        \le
        \sum_{P,Q,R,S\in\NN_0^{1+n}}
        \abs{a_{P,Q}} \abs{b_{R,S}} \,
        \seminormOben{\rho}{\monomial{P}{Q}\starWick[\hbar]\monomial{R}{S}}
        \le
        C \seminormOben{\rho'}{a} \seminormOben{\rho'}{b}
    \end{equation}
    holds for all $\hbar\in K$ and all
    $a=\sum_{P,Q\in\NN_0^{1+n}} a_{P,Q} \monomial{P}{Q},
      b=\sum_{R,S\in\NN_0^{1+n}} b_{R,S} \monomial{R}{S} \in \PolynomeOben$.
\end{lemma}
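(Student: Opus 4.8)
The plan is to reduce everything to a single scalar estimate on the $\seminormOben{\rho}{\argument}$-norm of a product of two basis monomials and then to sum. The left inequality in \eqref{eq:ContinuityOfWickOben} is just the triangle inequality: since $a$ and $b$ are polynomials, $a \starWick[\hbar] b = \sum_{P,Q,R,S} a_{P,Q}\, b_{R,S}\, \monomial{P}{Q}\starWick[\hbar]\monomial{R}{S}$ is a \emph{finite} sum, and $\seminormOben{\rho}{\argument}$ is a norm, so no convergence issue arises here. Hence the real content is the right inequality, and again by the triangle inequality it suffices to prove a pointwise bound
\[
  \seminormOben{\rho}{\monomial{P}{Q}\starWick[\hbar]\monomial{R}{S}}
  \le
  C\,(\rho')^{N}\sqrt{\abs{P+Q}!}\,\sqrt{\abs{R+S}!}
\]
for all $P,Q,R,S \in \NN_0^{1+n}$ and all $\hbar \in K$, where $N := \abs{P+Q+R+S}$ and $C,\rho'$ depend only on $\rho$ and $M := \sup_{\hbar \in K}\abs{\hbar} < \infty$. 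Indeed, multiplying this by $\abs{a_{P,Q}}\abs{b_{R,S}}$ and summing reproduces exactly $C\,\seminormOben{\rho'}{a}\seminormOben{\rho'}{b}$, since $\abs{P+Q}+\abs{R+S} = N$.

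First I would insert the explicit monomial product \eqref{eq:starwick} and apply the triangle inequality, using that each summand is a single monomial $\monomial{P+R-T}{Q+S-T}$ of total degree $\abs{P+R-T}+\abs{Q+S-T} = N - 2\abs{T}$. Writing $t := \abs{T}$, this gives
\[
  \seminormOben{\rho}{\monomial{P}{Q}\starWick[\hbar]\monomial{R}{S}}
  \le
  \sum_{T\le\min\{P,S\}}(2M)^{t}\,T!\binom{P}{T}\binom{S}{T}\,\rho^{N-2t}\sqrt{(N-2t)!}.
\]
The decisive point is the interplay between the contraction combinatorics $T!\binom{P}{T}\binom{S}{T}$, which grows factorially in $t$, and the weight $\sqrt{(N-2t)!}$, which decreases in $t$. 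Since $T \le \min\{P,S\}$ componentwise forces $2t \le \abs{P}+\abs{S} \le N$, one has $N!/(N-2t)! = \prod_{j=0}^{2t-1}(N-j) \ge (2t)!$, whence $\sqrt{(N-2t)!} \le \sqrt{N!}/\sqrt{(2t)!}$; moreover $\sqrt{(2t)!} \ge t! \ge T!$. Combined with $\binom{P}{T}\binom{S}{T}\le 2^{\abs{P}}2^{\abs{S}}$ this produces the crucial cancellation
\[
  T!\binom{P}{T}\binom{S}{T}\,\sqrt{(N-2t)!}
  \le
  2^{\abs{P}+\abs{S}}\,\sqrt{N!}.
\]
This is precisely the estimate for which the factor $\sqrt{\abs{P+Q}!}$ in Definition~\ref{definition:NormsOben} is tailored, and it is the heart of the argument.

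It then remains to absorb the surviving exponential factors into an enlargement of $\rho$. I would split $\sqrt{N!} = \sqrt{\binom{N}{\abs{P+Q}}}\,\sqrt{\abs{P+Q}!}\,\sqrt{\abs{R+S}!} \le 2^{N/2}\sqrt{\abs{P+Q}!}\,\sqrt{\abs{R+S}!}$ and bound the $T$-sum geometrically: with $\rho^{-2t}(2M)^{t} = (2M/\rho^2)^{t}$, the elementary bounds $\sum_i\min\{P_i,S_i\} \le \min\{\abs{P},\abs{S}\} \le N/2$, and the fact that the number of admissible $T$ grows only polynomially in $N$, the whole sum is dominated by $\mathrm{poly}(N)\,\gamma^{N}$ for some $\gamma = \gamma(\rho,M)$. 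Collecting all exponential-in-$N$ contributions yields $\seminormOben{\rho}{\monomial{P}{Q}\starWick[\hbar]\monomial{R}{S}} \le \mathrm{poly}(N)\,(\delta\rho)^{N}\sqrt{\abs{P+Q}!}\,\sqrt{\abs{R+S}!}$ for a constant $\delta = \delta(\rho,M)$. Choosing any $\rho' > \delta\rho$ turns $\mathrm{poly}(N)\,(\delta\rho/\rho')^{N}$ into a bounded sequence, whose supremum is the desired $C$; since $M<\infty$ is the only property of $K$ entering, both $C$ and $\rho'$ are uniform in $\hbar \in K$.

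The one genuinely delicate step, and the only place where the specific shape of the norm is used, is the combinatorial cancellation in the second paragraph. A cruder bound such as $T!\binom{P}{T}\binom{S}{T} \le (\abs{P}\abs{S})^{t}$ is far too lossy — it overlooks that the true growth of the contraction coefficients is only of order $\sqrt{(2t)!}$ — and would fail to be compensated by the decrease of $\sqrt{(N-2t)!}$; getting this balance right is the crux. Everything after it is routine book-keeping of geometric and exponential factors, and in fact mirrors the transfer of the corresponding statements for the symmetric tensor algebra to the present setting.
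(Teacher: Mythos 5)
Your argument is correct, but it is worth pointing out that the paper does not actually prove this lemma by a direct computation: it identifies $\PolynomeOben$ with the symmetric tensor algebra over $\CC^{1+n}\times\CC^{1+n}$ and imports the estimate wholesale from \cite[Prop.~2.11 and Lemma~2.12]{schoetz.waldmann:2018a}, remarking only that a direct verification ``is not very hard''. What you have written is precisely that omitted direct verification. The left inequality is indeed just the triangle inequality for a finite sum, and your key cancellation
\begin{equation*}
    T!\binom{P}{T}\binom{S}{T}\,\sqrt{(N-2\abs{T})!}
    \le
    2^{\abs{P}+\abs{S}}\,\sqrt{N!},
\end{equation*}
resting on $2\abs{T}\le\abs{P}+\abs{S}\le N$, on $(N-2\abs{T})!\le N!/(2\abs{T})!$, and on $T!\le\abs{T}!\le\sqrt{(2\abs{T})!}$, is exactly the mechanism for which the weight $\sqrt{\abs{P+Q}!}$ in Definition~\ref{definition:NormsOben} is designed; the remaining bookkeeping (the splitting $\sqrt{N!}\le 2^{N/2}\sqrt{\abs{P+Q}!}\,\sqrt{\abs{R+S}!}$, the polynomial count of admissible $T$, the absorption of all exponential factors into an enlarged $\rho'$, and the uniformity in $\hbar\in K$ through $M=\sup_{\hbar\in K}\abs{\hbar}$) all checks out. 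The paper's route buys brevity and reuse of the general machinery for the Wick product on locally convex vector spaces; yours buys a self-contained proof that makes visible exactly where the square-root-factorial weights enter, which is arguably more instructive in the present context.
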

It is not very hard to show directly that the estimate in
this lemma holds. However, we will later also need another technical result
about the growth of $\starWick[\hbar]$-powers from
\cite{schoetz.waldmann:2018a}:
\begin{lemma}
    \label{lemma:powergrowthestimate}%
    Let $\hbar \in \RR$ as well as a linear functional
    $\omega\colon \PolynomeOben[\group{U}(1)] \to \CC$ be given, such
    that $\omega$ is continuous with respect to the locally convex
    topology on $\PolynomeOben[\group{U}(1)]$ defined by the norms
    $\seminormOben{\rho}{\argument}$ for all $\rho > 0$ and such that
    $\omega$ is positive with respect $\starWick[\hbar]$, i.e. such
    that $\omega(a^*\starWick[\hbar]a) \ge 0$ holds for all
    $a \in \PolynomeOben[\group{U}(1)]$.  Then for all $k \in \NN_0$
    and all $a \in \PolynomeOben[\group{U}(1), (k)]$ there exist
    $C, D > 0$ with the property that
    \begin{equation}
        \label{eq:OmegaWickPowers}
        \omega\big(
        (a^*)^{\starWick[\hbar] m}
        \starWick[\hbar]
        a^{\starWick[\hbar] m}
        \big)^{\frac{1}{2}}
        \le
        C D^m (km)!
    \end{equation}
    holds for all $m \in \NN_0$, where $a^{\starWick[\hbar] m}$
    denotes the $m$-th power of $a$ with respect to the product
    $\starWick[\hbar]$.
\end{lemma}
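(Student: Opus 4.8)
The plan is to let positivity do only one thing---to turn the left-hand side into the $\omega$-length of the \emph{single} power $a^{\starWick[\hbar]m}$ rather than of the full $2m$-fold sandwich---and then to estimate that length by a degree-sensitive sharpening of Lemma~\ref{lemma:wickcont}. First I would record that continuity of $\omega$ gives $\rho_0>0$ and $C_0>0$ with $\abs{\omega(b)}\le C_0\seminormOben{\rho_0}{b}$ for all $b\in\PolynomeOben[\group{U}(1)]$, and that for real $\hbar$ pointwise conjugation is an anti-automorphism of $\big(\PolynomeOben[\group{U}(1)],\starWick[\hbar]\big)$, so that $(a^{\starWick[\hbar]m})^*=(a^*)^{\starWick[\hbar]m}$ and the quantity to be bounded equals $\omega\big((a^{\starWick[\hbar]m})^*\starWick[\hbar]a^{\starWick[\hbar]m}\big)^{1/2}=:\norm{a^{\starWick[\hbar]m}}_\omega$. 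Applying $\abs{\omega(\argument)}\le C_0\seminormOben{\rho_0}{\argument}$ and then Lemma~\ref{lemma:wickcont} \emph{once}, to the two factors $(a^{\starWick[\hbar]m})^*$ and $a^{\starWick[\hbar]m}$, produces $\rho_0'>0$ and $C>0$ with $\norm{a^{\starWick[\hbar]m}}_\omega^2\le C_0C\,\seminormOben{\rho_0'}{(a^{\starWick[\hbar]m})^*}\,\seminormOben{\rho_0'}{a^{\starWick[\hbar]m}}$. Since the norm \eqref{eq:NormOben} is manifestly invariant under $P\leftrightarrow Q$, hence under $\argument^*$, the right-hand side is $C_0C\,\seminormOben{\rho_0'}{a^{\starWick[\hbar]m}}^2$; positivity is used only to guarantee that the left-hand side is a nonnegative real, which legitimizes taking square roots and reduces the whole claim to proving $\seminormOben{\rho_0'}{a^{\starWick[\hbar]m}}\le C_4 D_4^{\,m}(km)!$ with $\rho_0'$ now \emph{fixed}.

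The heart of the argument is a multiplicativity estimate that, unlike Lemma~\ref{lemma:wickcont}, keeps $\rho$ fixed at the cost of a factor polynomial in the degree of one argument. From the explicit formula \eqref{eq:starwick} I would prove that there is $C_1>0$ (depending on $k,n,\hbar,\rho_0'$) with $\seminormOben{\rho_0'}{a\starWick[\hbar]b}\le C_1\,(1+\deg b)^k\,\seminormOben{\rho_0'}{a}\,\seminormOben{\rho_0'}{b}$ for all $b\in\PolynomeOben[\group{U}(1)]$, using throughout that $a\in\PolynomeOben[\group{U}(1),(k)]$ has degree at most $2k$. Indeed, for a monomial $\monomial{P}{Q}$ of $a$ (so $\abs{P},\abs{Q}\le k$) and a monomial $\monomial{R}{S}$ of $b$, the output $\monomial{P+R-T}{Q+S-T}$ in \eqref{eq:starwick} has total degree $d:=\abs{P+Q}+\abs{R+S}-2\abs{T}$ with $\abs{T}\le\abs{P}\le k$, and comparing its weight $(\rho_0')^{d}\sqrt{d!}$ with the product of the two input weights leaves, after $\binom{S}{T}\le\abs{S}^{\abs{T}}$, a factor bounded by $(1+\abs{R+S})^{\abs{P+Q}/2}\le(1+\abs{R+S})^{k}$. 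Granting this, iterating along $a^{\starWick[\hbar]m}=a\starWick[\hbar]\big(a^{\starWick[\hbar](m-1)}\big)$, whose second factor has degree at most $2k(m-1)$, yields $\seminormOben{\rho_0'}{a^{\starWick[\hbar]m}}\le\seminormOben{\rho_0'}{a}^{\,m}\prod_{j=1}^{m-1}C_1(1+2kj)^k\le C_2^{\,m}\,k^{km}(m!)^k$, and Stirling's formula turns $k^{km}(m!)^k$ into at most $C_3 D_3^{\,m}(km)!$. Together with the reduction above this is exactly the assertion.

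The hard part will be the uniform-in-$T$ cancellation behind this degree-sensitive estimate: raising the contraction order $\abs{T}$ enlarges the combinatorial weight $\binom{S}{T}\le\abs{S}^{\abs{T}}$ but simultaneously lowers the output degree by $2\abs{T}$, shrinking $\sqrt{d!}$ relative to $\sqrt{\abs{R+S}!}$ by a factor of order $\abs{R+S}^{-\abs{T}}$, and the two effects must be shown to cancel, leaving the $\abs{T}$-independent bound $(1+\abs{R+S})^{\abs{P+Q}/2}$. Verifying this cancellation---and that the residual prefactors $(2\abs{\hbar})^{\abs{T}}(\rho_0')^{-2\abs{T}}T!\binom{P}{T}$, together with the bounded number of admissible multi-indices $T$, stay under a constant depending only on $k,n,\hbar,\rho_0'$---is the only genuinely computational step; everything else is bookkeeping. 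This is precisely the mechanism behind the growth estimate cited from \cite{schoetz.waldmann:2018a}, specialized here to the $\group{U}(1)$-invariant polynomials via the identification already used for Lemma~\ref{lemma:wickcont}.
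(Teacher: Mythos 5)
Your proposal is correct, and its first half is exactly the paper's argument: continuity of $\omega$ plus one application of Lemma~\ref{lemma:wickcont} and the $*$-invariance of the norms \eqref{eq:NormOben} reduce everything to the bound $\seminormOben{\rho}{a^{\starWick[\hbar] m}} \le C D^m (km)!$ at a single fixed $\rho$, with positivity used only to make the square root legitimate. Where you diverge is in how that power-growth bound is obtained: the paper simply invokes \cite[Lemma~3.34]{schoetz.waldmann:2018a}, which gives $\seminormOben{\rho}{a^{\starWick[\hbar] m}} \le C''\sqrt{(2km)!}\,D'^m$, and then uses $\sqrt{(2km)!}\le 2^{km}(km)!$; you instead supply a self-contained derivation via the degree-sensitive submultiplicativity $\seminormOben{\rho}{a\starWick[\hbar]b}\le C_1(1+\deg b)^k\seminormOben{\rho}{a}\seminormOben{\rho}{b}$ at fixed $\rho$, iterated along $a^{\starWick[\hbar]m}=a\starWick[\hbar]a^{\starWick[\hbar](m-1)}$. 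Your identification of the crux is accurate and the cancellation does go through: with $d_1=\abs{P+Q}\le 2k$, $d_2=\abs{R+S}$ and $t=\abs{T}\le k$, the factor $\binom{S}{T}\le d_2^{\,t}$ is compensated by $\sqrt{(d_1+d_2-2t)!/(d_1+d_2)!}\le (d_1+d_2-2t+1)^{-t}$ up to a constant depending only on $k$ (treating $d_1+d_2<4k$ and $d_1+d_2\ge 4k$ separately), and the residue $\sqrt{\binom{d_1+d_2}{d_1}}\le (d_1+d_2)^{d_1/2}\le C(k)(1+d_2)^k$ is exactly your claimed bound; the remaining prefactors $(2\abs{\hbar})^{\abs{T}}\rho^{-2\abs{T}}T!\binom{P}{T}$ and the at most $(1+k)^{1+n}$ admissible $T$ contribute only a constant. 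The iteration then gives $\seminormOben{\rho}{a}^m\prod_{j=1}^{m-1}C_1(1+2kj)^k\le C_2^m k^{km}(m!)^k\le C_3D_3^m(km)!$ by Stirling. What your route buys is independence from the external reference (and a transparent view of why the degree filtration controls the growth); what it costs is precisely the computational verification you flag, which the paper avoids by citation. Either way the constants may depend on $\omega$, $\hbar$, $k$, $n$, which the statement permits.
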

\begin{proof}
    Let such $\hbar, \omega,k$ and $a$ be given, then it follows from
    the previous Lemma~\ref{lemma:wickcont} and the continuity of
    $\omega$ that there exists a $C', \rho>0$ with the property that
    $\omega(b^*\starWick[\hbar]b)^{1/2} \le C' \seminormOben{\rho}{b}$
    holds for all $b \in \PolynomeOben[\group{U}(1)]$, and especially
    \begin{equation*}
        \omega\big(
        (a^*)^{\starWick[\hbar] m}
        \starWick[\hbar]
        a^{\starWick[\hbar] m}
        \big)^{\frac{1}{2}}
        \le
        C' \seminormOben{\rho}{a^{\starWick[\hbar] m}}
    \end{equation*}
    for all $m \in \NN_0$. Then
    \cite[Lemma~3.34]{schoetz.waldmann:2018a} shows that there
    exist $C'', D' > 0$ such that
    $\seminormOben{\rho}{a^{\starWick[\hbar] m}} \le
    C''\sqrt{(2km)!}D^{\prime m}$
    holds for all $m \in \NN_0$. As $\sqrt{(2km)!} \le 2^{km} (km)!$,
    this proves the claim with $C = C'C''$ and $D = 2^k D'$.
\end{proof}
Note that we explicitly have to require $\omega$ to be continuous in
this statement: the polynomial functions in $\PolynomeOben$ are
not yet
complete. Hence the usual argument that positive functionals on
Fr\'echet $^*$\=/algebras are automatically continuous
\cite[Thm.~3.6.1]{schmuedgen:1990a}, does not apply here.

Using the explicit basis for $\PolynomeUnten$ we define norms in the
same spirit as before. Note that with our normalization conventions
for the basis functions $\monomialDownRed{P}{Q}$ the following
weighted $\ell^1$-like norms yield a slightly coarser topology than
the original one in \cite{beiser.waldmann:2014a}:
\begin{definition}[Norms on $\PolynomeUnten$]
    \label{definition:NormUnten}%
    For all $\rho > 0$ we define the norm
    \begin{equation}
        \label{eq:NormUnten}
        \seminormUnten[\bigg]{\rho}{
          \sum_{P,Q\in \NN_0^n}
          a_{P,Q} \monomialDownRed{P}{Q}
        }
        :=
        \sum_{P, Q \in \NN_0^n}
        \abs{a_{P,Q}} \rho^{\abs{P+Q}}
    \end{equation}
    on $\PolynomeUnten$, where $a_{P,Q} \in \CC$ for all
    $P,Q\in \NN_0^n$.
\end{definition}

These norms turn out  to be convenient in two essential ways. First,
the practioners will perhaps anticipate  (and we will make this precise  later by
  showing that the completion of $\PolynomeUnten$ w.r.t.~these norms is
  isomorphic to the Fr\'echet space of all \textit{entire} functions on $\CC^n
  \times \CC^n$) that the
  induced topology is exactly the topology of locally uniform convergence on $\Discstd$. Second,
we shall now identify the resulting topology as the quotient topology
with respect to the reduction map $\ReductionMap{\hbar}$! We need the
following well-known estimate for the Pochhammer symbols:
\begin{lemma}
    \label{lemma:Pochhammer}%
    For every compact subset $K \subseteq \CC \backslash (-\NN_0)$
    there exist two constants $\alpha, \omega > 0$ such that
    \begin{equation}
        \label{eq:PochhammerEstiamtes}
        \alpha^m m! \le \abs{(z)_m} \le \omega^m m!
    \end{equation}
    holds for all $z \in K$ and all $m \in \NN_0$.
\end{lemma}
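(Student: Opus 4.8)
The plan is to reduce the claim to a uniform two-sided bound on the individual factors of the Pochhammer symbol. Writing $(z)_m = \prod_{k=0}^{m-1}(z+k)$ and $m! = \prod_{k=0}^{m-1}(k+1)$, I would first record the exact ratio
\begin{equation*}
    \frac{\abs{(z)_m}}{m!}
    =
    \prod_{k=0}^{m-1} \frac{\abs{z+k}}{k+1}.
\end{equation*}
Consequently, it suffices to produce constants $c_1, c_2 > 0$ depending only on $K$ such that $c_1 \le \abs{z+k}/(k+1) \le c_2$ holds for all $z \in K$ and all $k \in \NN_0$; multiplying these inequalities over $k = 0, \ldots, m-1$ then yields the assertion with $\alpha := c_1$ and $\omega := c_2$. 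The case $m=0$ is the empty product $1$, for which the bound holds automatically once $c_1 \le 1 \le c_2$.

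To obtain the factor bounds I would extract two quantities from the compactness of $K$: the bound $M := \sup_{z \in K} \abs{z} < \infty$, and the distance $d := \inf_{z \in K,\, \ell \in \NN_0} \abs{z + \ell} > 0$. The latter is strictly positive precisely because $K$ is compact and disjoint from the closed set $-\NN_0 = \{0, -1, -2, \ldots\}$, which is exactly the hypothesis $K \subseteq \CC \setminus (-\NN_0)$; note also that $\abs{z+k} \ge d$ for every $k \in \NN_0$. The upper bound is then immediate, since $\abs{z+k} \le M + k$ gives $\abs{z+k}/(k+1) \le (M+k)/(k+1) \le \max\{M, 1\} =: c_2$ for all $k \in \NN_0$.

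The lower bound is the only genuinely delicate point, since the naive estimate $\abs{z+k} \ge d$ yields only $\abs{z+k}/(k+1) \ge d/(k+1)$, which degenerates as $k \to \infty$. The remedy is to split the range of $k$ at the threshold $K_0 := \lceil 2M + 1 \rceil$. For $k < K_0$ the denominator is bounded, so $\abs{z+k}/(k+1) \ge d/K_0 > 0$; for $k \ge K_0$ one uses instead the reverse triangle inequality $\abs{z+k} \ge k - M \ge (k+1)/2$, which gives $\abs{z+k}/(k+1) \ge 1/2$. Setting $c_1 := \min\{d/K_0,\, 1/2\} > 0$ then covers both regimes uniformly in $z \in K$ and $k \in \NN_0$, and since $c_1 \le 1 \le c_2$ the $m=0$ case is included as well. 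The main obstacle, as indicated, is handling the lower bound for small $k$ separately from large $k$; I would emphasise that this elementary product estimate is preferable to a Stirling-type asymptotic for $\Gamma(z+m)/\Gamma(m+1)$, which would produce only subexponential behaviour and would make the uniformity over $K$ considerably more awkward.
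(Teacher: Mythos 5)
Your proof is correct and complete. The paper itself states this lemma as a ``well-known estimate'' and supplies no proof at all, so there is nothing to compare against; your elementary factor-by-factor argument --- writing $\abs{(z)_m}/m!$ as $\prod_{k=0}^{m-1}\abs{z+k}/(k+1)$, bounding each factor above by $\max\{M,1\}$ with $M=\sup_{z\in K}\abs{z}$, and bounding it below by splitting at $k\ge\lceil 2M+1\rceil$ (where $\abs{z+k}\ge k-M\ge(k+1)/2$) versus $k<\lceil 2M+1\rceil$ (where the positive distance $d=\mathrm{dist}(K,-\NN_0)$ does the work) --- is exactly the kind of uniform, compactness-based estimate the lemma requires, and every step checks out, including the positivity of $d$ and the treatment of $m=0$.
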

%
With the next lemma we are able to relate the two locally convex
topologies before and after the (quantum) reduction procedure.
\begin{lemma}
    \label{lemma:topred}%
    Let $K \subseteq \HbarDef$ be a compact subset and let $\rho >
    0$. Then there exists a $\rho' > 0$ such that
    \begin{equation}
        \label{eq:ReductionCont}
        \seminormUnten{\rho}{\ReductionMap{\hbar}(a)}
        \le
        \seminormOben{\rho'}{a}
    \end{equation}
    holds for all $\hbar \in K$ and all $a \in \PolynomeOben$.
    Conversely, there exists a $\rho'' > 0$ such that
    \begin{equation}
        \seminormOben{\rho}{\ReductionMapInv{\hbar}(a)}
        \le
        \seminormUnten{\rho''}{a}
    \end{equation}
    holds for all $\hbar \in K$ and all $a \in \PolynomeUnten$, where
    $\ReductionMapInv{\hbar}\colon \PolynomeUnten \to \PolynomeOben$
    is the right inverse of $\ReductionMap{\hbar}$ that is defined as
    the linear extension of
    \begin{equation}
        \ReductionMapInv{\hbar}(\monomialDownRed{P}{Q})
        :=
        \bigg(
        (2\hbar)^{\max\{\abs{P},\abs{Q}\}}
        \bigg(\frac{1}{2\hbar}\bigg)_{\max\{\abs{P},\abs{Q}\}}
        \bigg)^{-1}
        \monomial{\tilde{P}}{\tilde{Q}}
    \end{equation}
    with
    $\tilde{P} := \big( \max\{\abs{Q} - \abs{P}, 0\}, P_1, \ldots,
    P_n\big) \in \NN_0^{1+n}$
    and
    $\tilde{Q} := \big( \max\{\abs{P} - \abs{Q}, 0\}, Q_1, \ldots,
    Q_n\big) \in \NN_0^{1+n}$.
\end{lemma}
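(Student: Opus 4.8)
The plan is to reduce both inequalities to estimates on single basis monomials and then sum, exploiting that both $\seminormOben{\rho}{\argument}$ and $\seminormUnten{\rho}{\argument}$ are weighted $\ell^1$-norms with respect to the bases $\monomial{P}{Q}$ and $\monomialDownRed{P}{Q}$, respectively. Since $\ReductionMap{\hbar}$ is defined on $\PolynomeOben[\group{U}(1)]$ (and we extend it by zero on the non-invariant monomials, which only enlarge the right-hand side), it suffices to treat $\group{U}(1)$-invariant $a$; and by the triangle inequality it is then enough to produce, for each admissible pair $(P,Q)$, a bound of the form $\seminormUnten{\rho}{\ReductionMap{\hbar}(\monomial{P}{Q})} \le \seminormOben{\rho'}{\monomial{P}{Q}}$ with $\rho'$ independent of $P$, $Q$, and $\hbar \in K$. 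The analogous reduction applies to the converse inequality.

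For the first estimate I would start from \eqref{eq:ReductionMapHbar}, so that $\ReductionMap{\hbar}(\monomial{P}{Q}) = (2\hbar)^{m}(\tfrac{1}{2\hbar})_{m}\,\monomialDown{P}{Q}$ with $m = \abs{P} = \abs{Q}$, and then expand $\monomialDown{P}{Q}$ in the basis via \eqref{eq:basisdecompositionUnten}. The crucial point is that all coefficients there are nonnegative, so $\seminormUnten{\rho}{\monomialDown{P}{Q}}$ is literally the sum of those coefficients weighted by powers of $\rho$. Collecting terms by $t = \abs{T}$, using the multinomial identity $\sum_{\abs{T} = t} \tfrac{t!}{T!} = n^{t}$ and then the binomial theorem collapses the double sum to the closed form $\seminormUnten{\rho}{\monomialDown{P}{Q}} = \rho^{2m - P_0 - Q_0}(1 + n\rho^{2})^{\min\{P_0, Q_0\}}$. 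Since $\min\{P_0,Q_0\} \le \tfrac12(P_0 + Q_0)$ and $(1+n\rho^2)^{1/2}/\rho \ge 1$, this is bounded by $(1 + n\rho^{2})^{m}$.

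It then remains to control the Pochhammer prefactor. As $\hbar$ ranges over the compact set $K \subseteq \HbarDef$, the points $\tfrac{1}{2\hbar}$ form a compact subset of $\CC \setminus (-\NN_0)$, since they are never zero and, by definition of $\HbarDef$, never a negative integer; hence Lemma~\ref{lemma:Pochhammer} gives $\omega > 0$ with $\abs{(\tfrac{1}{2\hbar})_m} \le \omega^m m!$, and together with $\abs{2\hbar} \le M$ on $K$ this yields $\abs{(2\hbar)^m (\tfrac{1}{2\hbar})_m} \le (M\omega)^m m!$. Combining with the previous bound and using $m! \le \sqrt{(2m)!}$ gives $\seminormUnten{\rho}{\ReductionMap{\hbar}(\monomial{P}{Q})} \le \bigl(M\omega(1+n\rho^2)\bigr)^{m}\sqrt{(2m)!}$, which equals $\seminormOben{\rho'}{\monomial{P}{Q}}$ for $\rho' = \sqrt{M\omega(1+n\rho^2)}$; summing over $(P,Q)$ finishes \eqref{eq:ReductionCont}.

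The converse is more direct because $\ReductionMapInv{\hbar}$ sends each basis element $\monomialDownRed{P}{Q}$ to a scalar multiple of a single monomial, so no basis expansion is needed. Writing $M^{*} = \max\{\abs{P}, \abs{Q}\}$ one has $\abs{\tilde P} = \abs{\tilde Q} = M^{*}$, hence $\seminormOben{\rho}{\monomial{\tilde P}{\tilde Q}} = \rho^{2M^{*}}\sqrt{(2M^{*})!}$, while the scalar is bounded using the lower estimate $\abs{(\tfrac{1}{2\hbar})_{M^{*}}} \ge \alpha^{M^{*}} M^{*}!$ from Lemma~\ref{lemma:Pochhammer} together with $\abs{2\hbar} \ge m_0 > 0$ on $K$ (here $m_0>0$ because $0\notin K$), giving a factor at most $(m_0\alpha)^{-M^{*}}(M^{*}!)^{-1}$. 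With $\sqrt{(2M^{*})!} \le 2^{M^{*}} M^{*}!$ everything reduces to $\bigl(2\rho^2/(m_0\alpha)\bigr)^{M^{*}}$, and since $\tfrac12\abs{P+Q} \le M^{*} \le \abs{P+Q}$ this is at most $(\rho'')^{\abs{P+Q}} = \seminormUnten{\rho''}{\monomialDownRed{P}{Q}}$ for suitable $\rho''$; summing over $(P,Q)$ proves the second inequality. The hard part will be the exact evaluation of $\seminormUnten{\rho}{\monomialDown{P}{Q}}$ via \eqref{eq:basisdecompositionUnten}: the positivity of the expansion coefficients and the multinomial collapse are what make the argument work, whereas the remaining steps are routine applications of Lemma~\ref{lemma:Pochhammer} and elementary factorial inequalities.
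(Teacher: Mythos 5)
Your proposal is correct and follows essentially the same route as the paper: reduce to single basis monomials, expand $\monomialDown{P}{Q}$ via \eqref{eq:basisdecompositionUnten}, collapse the $T$-sum with the multinomial identity, and control the Pochhammer factors on both sides with Lemma~\ref{lemma:Pochhammer} together with $m! \le \sqrt{(2m)!}$ and $\binom{2M}{M}^{1/2}\le 2^M$. The only cosmetic difference is that you evaluate $\seminormUnten{\rho}{\monomialDown{P}{Q}}$ exactly as $\rho^{2m-P_0-Q_0}(1+n\rho^2)^{\min\{P_0,Q_0\}}$ before bounding, whereas the paper assumes $\rho\ge1$ and bounds the $T$-sum by $(1+n)^{\min\{P_0,Q_0\}}$ directly; both yield the same kind of $\rho'$.
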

\begin{proof}
    Let $K \subseteq \HbarDef$ and, without loss of generality,
    $\rho \ge 1$ be given. Then the previous
    Lemma~\ref{lemma:Pochhammer} shows that there exist
    $\alpha, \omega > 0$ such that
    $\alpha^m m! \le \abs{(1/(2\hbar))_m} \le \omega^m m!$ holds for
    all $m \in \NN_0$ and all $\hbar \in K$.  Define
    $r_{\max} := \max_{\hbar\in K} \abs{2\hbar}$ and
    $r_{\min} := \min_{\hbar\in K} \abs{2\hbar} > 0$.  For all
    $\hbar \in K$ and
    $a = \sum_{P,Q\in \NN_0^{1+n}, \abs{P} = \abs{Q}} a_{P, Q}
    \monomial{P}{Q} \in \PolynomeOben[\group{U}(1)]$
    we get the following estimate with the help of identity
    \eqref{eq:basisdecompositionUnten} and the prime-notation for
    omission of the $0$-component in tuples used there:
    \begin{align*}
        &\seminormUnten{\rho}{\ReductionMap{\hbar}(a)}
        \\
        &\quad=
        \seminormUnten[\bigg]{\rho}{
          \sum_{
            \substack{
              P,Q\in \NN_0^{1+n}\\
              \abs{P} = \abs{Q}
            }
          }
          a_{P,Q} (2\hbar)^{\abs{P}}
          \bigg(\frac{1}{2\hbar}\bigg)_{\abs{P}}
          \monomialDown{P}{Q}
        }
        \\
        &\quad\le
        \sum_{
          \substack{
            P, Q \in \NN_0^{1+n} \\
            \abs{P} = \abs{Q}
          }
        }
        \abs{a_{P,Q}}
        \abs{2\hbar}^{\abs{P}}
        \abs[\bigg]{\bigg(\frac{1}{2\hbar}\bigg)_{\abs{P}}}
        \sum_{
          \substack{
            T \in \NN_0^{n} \\
            \abs{T} \le \min\{P_0, Q_0\}
          }
        }
        \binom{\min\{P_0, Q_0\}}{\abs{T}}
        \frac{\abs{T}!}{T!}
        \rho^{\abs{P'+Q'+2T}}
        \\
        &\quad\le
        \sum_{
          \substack{
            P, Q \in \NN_0^{1+n} \\
            \abs{P} = \abs{Q}
          }
        }
        \abs{a_{P,Q}}
        \Big(\rho\sqrt{\omega r_{\max}}\Big)^{\abs{P+Q}}
        \abs{P}!
        \sum_{
          \substack{
            T \in \NN_0^{n} \\
            \abs{T} \le \min\{P_0, Q_0\}
          }
        }
        \binom{\min\{P_0,Q_0\}}{\abs{T}}
        \frac{\abs{T}!}{T!}
        \\
        &\quad=
        \sum_{
          \substack{
            P, Q \in \NN_0^{1+n} \\
            \abs{P} = \abs{Q}
          }
        }
        \abs{a_{P,Q}}
        \Big(\rho\sqrt{\omega r_{\max}}\Big)^{\abs{P+Q}}
        (1+n)^{\min\{P_0, Q_0\}}
        \abs{P}!
        \\
        &\quad\le
        \sum_{
          \substack{
            P, Q \in \NN_0^{1+n} \\
            \abs{P} = \abs{Q}
          }
        }
        \abs{a_{P,Q}}
        \Big( \rho\sqrt{\omega r_{\max}(1+n)} \Big)^{\abs{P+Q}}
        \sqrt{\abs{P+Q}}!
        \\
        &\quad=
        \seminormOben{\rho\sqrt{\omega r_{\max}(1+n)}}{a}.
    \end{align*}
    This shows the first estimate with
    $\rho' = \rho\sqrt{\omega r_{\max}(1+n)}$.  Conversely,
    for all $\hbar \in K$ and all
    $b = \sum_{P, Q \in \NN_0^n} b_{P,Q} \monomialDownRed{P}{Q} \in
    \PolynomeUnten$ we get
    \begin{align*}
        &\seminormOben{\rho}{\ReductionMapInv{\hbar}(b)}
        \\
        &\quad=
        \seminormOben[\bigg]{\rho}{
          \sum_{P, Q \in \NN_0^n}
          b_{P,Q}
          \bigg(
          (2\hbar)^{\max\{\abs{P},\abs{Q}\}}
          \bigg(\frac{1}{2\hbar}\bigg)_{\max\{\abs{P},\abs{Q}\}}
          \bigg)^{-1}
          \monomial{\tilde{P}}{\tilde{Q}}
        }
        \\
        &\quad\le
        \sum_{P, Q \in \NN_0^n}
        \abs{b_{P,Q}}
        \bigg(
        (r_{\min} \alpha)^{\max\{\abs{P},\abs{Q}\}}
        \big(\max\{\abs{P}, \abs{Q}\}\big)!
        \bigg)^{-1}
        \rho^{2\max\{\abs{P},\abs{Q}\}}
        \sqrt{\big(2\max\{\abs{P},\abs{Q}\}\big)!}
        \\
        &\quad=
        \sum_{P, Q \in \NN_0^n}
        \abs{b_{P,Q}}
        \bigg(\frac{\rho^2}{r_{\min}\alpha}\bigg)^{\max\{\abs{P},\abs{Q}\}}
        \binom{2\max\{\abs{P}, \abs{Q}\}}
        {\max\{\abs{P}, \abs{Q}\}}^{\frac{1}{2}}
        \\
        &\quad\le
        \sum_{P,Q\in \NN_0^n}
        \abs{b_{P,Q}}
        \bigg(\frac{2\rho^2}{r_{\min}\alpha}\bigg)^{\max\{\abs{P},\abs{Q}\}}
        \\
        &\quad\le
        \seminormUnten{\rho''}{b}.
    \end{align*}
    With $\rho'' = \max\{2\rho^2/(r_{\min}\alpha),1\}$ as $\max\{\abs{P},\abs{Q}\} \le \abs{P}+\abs{Q}$.
\end{proof}
The previous Lemma~\ref{lemma:topred} shows that for all
$\hbar \in \HbarDef$ the norms $\seminormUnten{\rho}{\argument}$ with
$\rho > 0$ induce the quotient topology of
$\PolynomeOben[\group{U}(1)]/\ker \ReductionMap{\hbar}$ with the
locally convex topology of the norms $\seminormOben{\rho}{\argument}$
with $\rho > 0$. Together with the continuity of $\starWick[\hbar]$
from Lemma~\ref{lemma:wickcont} this yields:
\begin{theorem}[Continuity of the star product]
    \label{theorem:starRedCont}%
    For every $\hbar\in\HbarDef$ the product $\starRed[\hbar]$ on
    $\PolynomeUnten$ is continuous with respect to the locally convex
    topology defined by the norms $\seminormUnten{\rho}{\argument}$
    for all $\rho > 0$.
\end{theorem}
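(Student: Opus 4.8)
The plan is to reduce everything to the two facts already in hand: the continuity of $\starWick[\hbar]$ on $\PolynomeOben$ from Lemma~\ref{lemma:wickcont}, and the two-sided comparison of the topologies before and after reduction from Lemma~\ref{lemma:topred}. Fix $\hbar \in \HbarDef$. Since a single point is compact, I would take $K = \{\hbar\}$ in both lemmas, so that all the constants they produce are available for this particular $\hbar$.

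First I would rewrite the reduced product in a shape to which these lemmas apply verbatim. Since $\ker \ReductionMap{\hbar}$ is a $\starWick[\hbar]$-ideal by Proposition~\ref{proposition:quantumredkernel}, the map $\ReductionMap{\hbar}$ descends to an algebra homomorphism intertwining $\starWick[\hbar]$ and $\starRed[\hbar]$, so that Definition~\ref{definition:starred} gives $a \starRed[\hbar] b = \ReductionMap{\hbar}(a' \starWick[\hbar] b')$ for any preimages $a', b' \in \PolynomeOben[\group{U}(1)]$. I would choose the canonical right inverse and set $a' := \ReductionMapInv{\hbar}(a)$ and $b' := \ReductionMapInv{\hbar}(b)$. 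The one point requiring care is that $a'$ and $b'$ are $\group{U}(1)$-invariant: the monomials $\monomial{\tilde P}{\tilde Q}$ occurring in $\ReductionMapInv{\hbar}$ satisfy $\abs{\tilde P} = \abs{\tilde Q} = \max\{\abs{P}, \abs{Q}\}$, and hence, by $\group{U}(1)$-equivariance of the Wick product, $a' \starWick[\hbar] b' \in \PolynomeOben[\group{U}(1)]$. This invariance is exactly what makes Lemma~\ref{lemma:topred} applicable to the intermediate element.

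The estimate then follows by chaining three inequalities. Given $\rho > 0$, the first part of Lemma~\ref{lemma:topred} supplies a $\rho_1 > 0$ with
\begin{equation*}
    \seminormUnten{\rho}{a \starRed[\hbar] b}
    =
    \seminormUnten{\rho}{\ReductionMap{\hbar}(a' \starWick[\hbar] b')}
    \le
    \seminormOben{\rho_1}{a' \starWick[\hbar] b'}.
\end{equation*}
Lemma~\ref{lemma:wickcont}, applied with the parameter $\rho_1$, yields constants $C, \rho_2 > 0$ with $\seminormOben{\rho_1}{a' \starWick[\hbar] b'} \le C \seminormOben{\rho_2}{a'} \seminormOben{\rho_2}{b'}$. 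Finally the second part of Lemma~\ref{lemma:topred}, applied with the parameter $\rho_2$, produces a $\rho_3 > 0$ with $\seminormOben{\rho_2}{\ReductionMapInv{\hbar}(a)} \le \seminormUnten{\rho_3}{a}$, and likewise for $b$. Combining these gives
\begin{equation*}
    \seminormUnten{\rho}{a \starRed[\hbar] b}
    \le
    C \, \seminormUnten{\rho_3}{a} \, \seminormUnten{\rho_3}{b}
\end{equation*}
for all $a, b \in \PolynomeUnten$, which is precisely the bilinear continuity estimate for $\starRed[\hbar]$ relative to the norms $\seminormUnten{\rho}{\argument}$.

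I do not expect a genuine obstacle here: the analytic content has been isolated in Lemmas~\ref{lemma:wickcont} and~\ref{lemma:topred}, and what remains is the bookkeeping of the seminorm parameters together with the verification of $\group{U}(1)$-invariance of $a' \starWick[\hbar] b'$ noted above. I would close by remarking that a single product estimate of this form for each $\rho$ already yields joint continuity of the bilinear map $\starRed[\hbar]$, since the underlying spaces are locally convex and (via the countable cofinal family $\rho \in \NN$) metrizable.
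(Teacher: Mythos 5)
Your proof is correct and follows exactly the route the paper intends: the paper states the theorem as an immediate consequence of Lemma~\ref{lemma:wickcont} and the quotient-topology identification from Lemma~\ref{lemma:topred}, and you have simply written out the explicit chain of seminorm estimates (including the correct observation that $\ReductionMapInv{\hbar}$ lands in $\PolynomeOben[\group{U}(1)]$ so that both parts of Lemma~\ref{lemma:topred} apply). No gaps.
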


%
%

\subsection{Characterization of the Completion}
Having constructed a suitable locally convex topology on
$\PolynomeUnten$, the next step is to characterize the topology as
well as the completion of the space $\PolynomeUnten$ under this
topology. Understanding various charts on $\DiscDouble$ will be
especially helpful. Recall that we have already defined the standard
chart $\chartstdDouble\colon \DiscDoubleDefStd \to C^\std$ in
\eqref{eq:standardcoord} such that
\begin{equation}
    \label{eq:ChartStdDouble}
    \chartstdDouble \circ \prodiscDouble
    :=
    \left(
        \frac{x^1}{x^0}, \dots, \frac{x^n}{x^0}, \frac{y^1}{y^0}, \dots,
        \frac{y^n}{y^0}
    \right)\bigg|_{\LevelsetDouble},
\end{equation}
where
$\DiscDoubleDefStd = \set[\big]{[p,q] \in \DiscDouble}{x^0(p,q) \neq 0
  \textrm{ and } y^0(p,q) \neq 0}$
and
$C^\std := \set[\big]{(p,q) \in \CC^n \times \CC^n}{p\cdot q \neq 1}$.
We will also need the following two charts:
\begin{definition}[$P$- and $Q$-chart on $\DiscDouble$]
    \label{definition:PQCharts}%
    Let
    $\DiscDoubleDefP := \set[\big]{[p,q] \in \DiscDouble}{y^0(p,q)
      \neq 0}$
    and
    $\DiscDoubleDefQ := \set[\big]{[p,q] \in \DiscDouble}{x^0(p,q)
      \neq 0}$
    and define the $P$-chart
    $\chartPDouble\colon \DiscDoubleDefP \to \CC^n \times \CC^n$ as
    well as the $Q$-chart
    $\chartQDouble\colon \DiscDoubleDefQ \to \CC^n \times \CC^n$ by
    \begin{align}
        \chartPDouble \circ \prodiscDouble
        &:=
        \left(
            x^1 y^0, \ldots, x^n y^0,
            \frac{y^1}{y^0}, \ldots, \frac{y^n}{y^0}
        \right)\bigg|_{\LevelsetDouble}
        \label{eq:Pcoord} \\
        \shortintertext{and}
        \chartQDouble  \circ \prodiscDouble
        &:=
        \left(
            \frac{x^1}{x^0}, \ldots, \frac{x^n}{x^0},
            x^0y^1, \ldots, x^0y^n
        \right)\bigg|_{\LevelsetDouble},
        \label{eq:Qcoord}
    \end{align}
    respectively.
\end{definition}
Note that $\chartstdDouble$, $\chartPDouble$ and $\chartQDouble$ are
all well-defined biholomorphic maps. With respect to these charts, the
monomials $\monomialDownHat{P}{Q}$ with $P, Q \in \NN_0^{1+n}$,
$\abs{P} = \abs{Q}$, are represented as
\begin{align}
    \monomialDownHat{P}{Q} \circ \chartstdDoubleInv
    &=
    \frac{x^{P'}y^{Q'}}{(1 - x \cdot y)^{\abs{P}}} \bigg|_{C^\std},
    \label{eq:chartstddoublerep}
    \\
    \monomialDownHat{P}{Q} \circ \chartPDoubleInv
    &=
    (1 + x \cdot y)^{P_0} x^{P'} y^{Q'},
    \label{eq:chartPdoublerep}
    \\
    \shortintertext{and}
    \monomialDownHat{P}{Q} \circ \chartQDoubleInv
    &=
    (1 + x \cdot y)^{Q_0} x^{P'} y^{Q'},
    \label{eq:chartQdoublerep}
\end{align}
where $P' = (P_1, \ldots, P_n) \in \NN_0^n$ and
$Q' = (Q_1, \ldots, Q_n) \in \NN_0^n$.
In particular, this implies that
$\monomialDownRedHat{P}{Q} \circ \chartPDoubleInv = x^{P} y^{Q}$ for
all $P, Q \in \NN_0^n$ with $\abs{P} \ge \abs{Q}$ and
$\monomialDownRedHat{P}{Q} \circ \chartQDoubleInv = x^{P} y^{Q}$ for
all $P, Q \in \NN_0^n$ with $\abs{P} \le \abs{Q}$, which motivates the
following definition:
\begin{definition}[Coordinate functionals for fundamental monomials]
    \label{definition:CoeffUnten}%
    For all $P, Q \in \NN_0^n$, define the linear functional
    $\CoeffUnten{P}{Q}\colon \AnalyticUnten \to \CC$ as the
    Cauchy integral
    \begin{align}
        \label{eq:CoeffUntenPgeQ}
        \CoeffUnten{P}{Q}(a)
        &:=
        \frac{1}{(-4\pi^2)^n} \oint_{C} \cdots \oint_{C}
        \frac{\hat{a}\circ \chartPDoubleInv}{ x^{P+1} y^{Q+1} }
        \D^n x\wedge \D^n y
        \qquad
        \textrm{if }
        \abs{P}\ge \abs{Q}
        \\
        \label{eq:CoeffUntenQgeP}
        \shortintertext{and}
        \CoeffUnten{P}{Q}(a)
        &:=
        \frac{1}{(-4\pi^2)^n} \oint_{C} \cdots \oint_{C}
        \frac{\hat{a}\circ \chartQDoubleInv}{ x^{P+1} y^{Q+1} }
        \D^n x\wedge \D^n y
        \qquad
        \textrm{if }\abs{P} < \abs{Q}
    \end{align}
    for all $a \in \AnalyticUnten$, where $C\subseteq \CC$
    is a circle around $0$ with arbitrary positive radius and where
    $P+1 := (P_1+1, \ldots, P_n+1)$, analogous for $Q$.
\end{definition}
Using the explicit formulas \eqref{eq:chartPdoublerep} and \eqref{eq:chartQdoublerep}
we immediately get:
\begin{proposition}
    \label{proposition:CoeffUntenDualBasis}%
    For all $P, Q, R, S \in \NN_0^n$, the identity
    \begin{equation}
        \label{eq:CoeffUntenDeltas}
        \CoeffUnten{R}{S}\big(\monomialDownRed{P}{Q}\big)
        =
        \delta_{P,R}\,\delta_{Q,S}
    \end{equation}
    holds.
\end{proposition}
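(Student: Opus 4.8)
The functionals $\CoeffUnten{R}{S}$ are nothing but iterated Cauchy integrals extracting a single Taylor coefficient, so the plan is to identify that coefficient for the explicit polynomial representing $\monomialDownRedHat{P}{Q}$ in the relevant chart. The normalisation is chosen so that $(-4\pi^2)^n = (2\pi\I)^{2n}$, and the one--dimensional formula $\frac{1}{2\pi\I}\oint_C z^{m-k-1}\,\D z = \delta_{m,k}$, applied in each of the $2n$ variables, yields for any power series $f = \sum_{A,B\in\NN_0^n} c_{A,B}\,x^A y^B$ convergent near the origin
\begin{equation*}
    \frac{1}{(-4\pi^2)^n}\oint_C\cdots\oint_C \frac{f}{x^{R+1}y^{S+1}}\,\D^n x\wedge\D^n y = c_{R,S}.
\end{equation*}
Hence $\CoeffUnten{R}{S}(a)$ equals the coefficient of $x^R y^S$ in the expansion of $\hat{a}\circ\chartPDoubleInv$ when $\abs{R}\ge\abs{S}$ and of $\hat{a}\circ\chartQDoubleInv$ when $\abs{R}<\abs{S}$; it remains to evaluate this for $a=\monomialDownRed{P}{Q}$, using the explicit chart representations \eqref{eq:chartPdoublerep} and \eqref{eq:chartQdoublerep} together with Definition~\ref{definition:BasisMonomialsUnten}.

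I would first dispose of the matching cases. If $\abs{R}\ge\abs{S}$ and $\abs{P}\ge\abs{Q}$, then Definition~\ref{definition:BasisMonomialsUnten} and \eqref{eq:chartPdoublerep} give $\monomialDownRedHat{P}{Q}\circ\chartPDoubleInv = x^P y^Q$ (the $0$-component of the underlying $(1+n)$-multiindex vanishes, so the $(1+x\cdot y)$-factor has exponent zero), and the displayed Cauchy formula yields $\delta_{P,R}\,\delta_{Q,S}$, as required. The case $\abs{R}<\abs{S}$ with $\abs{P}\le\abs{Q}$ is identical after replacing $\chartPDoubleInv$ by $\chartQDoubleInv$ and invoking \eqref{eq:chartQdoublerep}.

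The only genuine content lies in the mismatched cases, where the chart selected by $R,S$ does not correspond to the representation of $\monomialDownRed{P}{Q}$ dictated by $P,Q$; this is the step I expect to be the main obstacle. Here \eqref{eq:chartPdoublerep} resp.\ \eqref{eq:chartQdoublerep} produce a nontrivial extra factor: for $\abs{R}\ge\abs{S}$ with $\abs{P}<\abs{Q}$ the integrand is $(1+x\cdot y)^{\abs{Q}-\abs{P}}\,x^P y^Q$, and for $\abs{R}<\abs{S}$ with $\abs{P}>\abs{Q}$ it is $(1+x\cdot y)^{\abs{P}-\abs{Q}}\,x^P y^Q$. The key observation is that $x\cdot y = \sum_i x^i y^i$ raises the $x$- and $y$-multidegrees by equal amounts, so expanding the power produces only monomials $x^{P+\gamma}y^{Q+\gamma}$ with a common $\gamma\in\NN_0^n$. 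The coefficient of $x^R y^S$ can therefore be nonzero only if $R-P = S-Q = \gamma$, which forces $\abs{R}-\abs{S}=\abs{P}-\abs{Q}$; but in the first mismatched case $\abs{R}-\abs{S}\ge 0 > \abs{P}-\abs{Q}$ and in the second $\abs{R}-\abs{S}<0<\abs{P}-\abs{Q}$, a contradiction in both. Thus the Cauchy integral vanishes, and it does so in agreement with the right-hand side, since $P=R$, $Q=S$ would in either mismatched case contradict the standing inequality between $\abs{R}$ and $\abs{S}$, whence $\delta_{P,R}\,\delta_{Q,S}=0$ as well. This exhausts all cases and establishes \eqref{eq:CoeffUntenDeltas}.
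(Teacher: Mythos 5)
Your proof is correct and follows the same route the paper takes: the paper simply asserts that the identity follows ``immediately'' from the chart representations \eqref{eq:chartPdoublerep} and \eqref{eq:chartQdoublerep} together with the Cauchy integral formula. Your case analysis is sound, and your treatment of the mismatched cases --- where the parity of $(1+x\cdot y)^k$ in the $x$- and $y$-degrees forces the relevant Taylor coefficient to vanish exactly when $\delta_{P,R}\,\delta_{Q,S}=0$ --- supplies precisely the detail the paper leaves implicit.
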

\begin{proposition}
    \label{proposition:CoeffUntenAlternativeFormula}%
    The two formulas for $\CoeffUnten{P}{Q}$ in the $P$- and $Q$-chart
    can be combined into one single formula in the standard-chart,
    namely
    \begin{equation}
        \label{eq:dualbasisstadrep}
        \CoeffUnten{P}{Q}(a)
        =
        \frac{1}{(-4\pi^2)^n} \oint_{C} \cdots \oint_{C}
        \frac{\hat{a} \circ \chartstdDoubleInv}{x^{P+1} y^{Q+1}}
        (1 - x \cdot y)^{\max\{\abs{P}, \abs{Q}\} - 1}
        \D^n x\wedge \D^n y,
    \end{equation}
    for all $a\in\AnalyticUnten$,
    where $C\subseteq \CC$ is a circle around $0$ with radius in
    $]0, 1/\sqrt{n}[$ and where again $P+1 := (P_1+1, \ldots, P_n+1)$,
    analogous for $Q$.
\end{proposition}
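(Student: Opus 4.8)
The plan is to derive the single standard-chart formula \eqref{eq:dualbasisstadrep} directly from the defining $P$- and $Q$-chart integrals of Definition~\ref{definition:CoeffUnten} by one holomorphic change of variables, carried out for each fixed $a \in \AnalyticUnten$ separately; no density or continuity input is needed. I treat the case $\abs{P} \ge \abs{Q}$, where $\CoeffUnten{P}{Q}$ is defined through the $P$-chart; the case $\abs{P} < \abs{Q}$ is entirely symmetric, using the $Q$-chart and interchanging the roles of the two $\CC^n$-factors (the analogous transition map $(x,y) \mapsto (x, y/(1 - x\cdot y))$ then produces the weight $(1 - x\cdot y)^{\abs{Q}-1}$).

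First I record the transition map. Comparing \eqref{eq:standardcoord} and \eqref{eq:Pcoord} on the level set, where $x^0 y^0 = 1 + \sum_i x^i y^i$ since $\metricDouble = -1$ there, one computes $\Phi := \chartPDouble \circ \chartstdDoubleInv$ to be $\Phi(x, y) = \bigl(x/(1 - x\cdot y),\, y\bigr)$, a biholomorphism from $C^\std$ onto $\CC^n \times \CC^n$ satisfying $\chartPDoubleInv \circ \Phi = \chartstdDoubleInv$. Its Jacobian is block–triangular, the second block being the identity, so it equals $\det\bigl(\partial(x^i/(1 - x\cdot y))/\partial x^j\bigr)$. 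Writing $s := 1 - x\cdot y$, this matrix is $s^{-1}\bigl(I + s^{-1} x y^{\Transpose}\bigr)$, a rank-one update of a multiple of the identity, whose determinant evaluates to $s^{-(n+1)}$ once one uses $x\cdot y = 1 - s$. Substituting $\Phi$ into the $P$-chart integrand and using $\hat a \circ \chartPDoubleInv \circ \Phi = \hat a \circ \chartstdDoubleInv$ together with $\xi^{P+1} = x^{P+1} s^{-(\abs{P}+n)}$ and $\eta^{Q+1} = y^{Q+1}$ for $(\xi,\eta) = \Phi(x,y)$, the Jacobian factor $s^{-(n+1)}$ collects exactly the power $s^{\abs{P}+n-(n+1)} = s^{\abs{P}-1} = (1 - x\cdot y)^{\max\{\abs{P},\abs{Q}\}-1}$ appearing in \eqref{eq:dualbasisstadrep}. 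Thus $\Phi^\ast$ turns the $P$-chart integrand into precisely the standard-chart integrand, and the change-of-variables formula for holomorphic top-degree forms gives $\int_{T_r}(\text{std integrand}) = \int_{\Phi(T_r)}(\text{$P$-integrand})$, where $T_r$ is the polytorus $\{\abs{x^i} = \abs{y^i} = r\}$ with $r \in \,]0, 1/\sqrt{n}[$.

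The one genuinely nontrivial step — and the point I expect to be the main obstacle — is matching the contours, since the $P$-chart integral in Definition~\ref{definition:CoeffUnten} is over a polytorus of \emph{arbitrary} radius while the change of variables produces the image cycle $\Phi(T_r)$. Here I use that $\hat a \circ \chartPDoubleInv$ is entire on $\CC^n \times \CC^n$ (the $P$-chart maps biholomorphically onto all of $\CC^n \times \CC^n$), so the $P$-chart integrand $\omega := (\hat a \circ \chartPDoubleInv)\, x^{-(P+1)} y^{-(Q+1)}\, \D^n x \wedge \D^n y$ is a closed holomorphic $2n$-form on $(\CC_*)^{2n}$, with poles only on the coordinate hyperplanes. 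The linear homotopy $\Phi_t(x, y) := \bigl(x/(1 - t\, x\cdot y),\, y\bigr)$, $t \in [0,1]$, stays in $(\CC_*)^{2n}$ on $T_r$ because $\abs{t\, x\cdot y} \le n r^2 < 1$ keeps $1 - t\, x\cdot y \neq 0$ and all coordinates nonzero; it deforms $\Phi(T_r) = \Phi_1(T_r)$ into the standard polytorus $\Phi_0(T_r)$. As $\omega$ is closed, Stokes' theorem makes $\int_{\Phi_t(T_r)}\omega$ independent of $t$, and over the standard polytorus this integral is the iterated Cauchy integral defining $\CoeffUnten{P}{Q}(a)$.

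Chaining the three equalities then yields \eqref{eq:dualbasisstadrep}. The radius bound $r < 1/\sqrt{n}$ is exactly what is needed: it guarantees $T_r \subseteq C^\std$, so $\Phi$ and all $\Phi_t$ are defined on $T_r$, and in the degenerate case $P = Q = 0$, where the weight $(1 - x\cdot y)^{-1}$ is singular, it ensures that $T_r$ avoids the polar locus $\{x \cdot y = 1\}$.
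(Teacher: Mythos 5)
Your proof is correct and follows essentially the same route as the paper: both pull the $P$-chart (resp.\ $Q$-chart) Cauchy integral back along the transition map $\chartPDouble \circ \chartstdDoubleInv\colon (x,y) \mapsto \bigl(x/(1-x\cdot y),\,y\bigr)$ and track the Jacobian factor $(1-x\cdot y)^{-(n+1)}$ against the power $(1-x\cdot y)^{\abs{P}+n}$ from the denominator to produce the weight $(1-x\cdot y)^{\max\{\abs{P},\abs{Q}\}-1}$. In fact you are more careful than the paper on one point it leaves implicit, namely that the image cycle of the polytorus under the transition map can be deformed back to a standard polytorus without changing the integral; your homotopy $\Phi_t$ together with closedness of the holomorphic top-degree form and the bound $nr^2<1$ settles this cleanly.
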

\begin{proof}
    The change of coordinates from the standard- to the $P$-chart is
    given by
    \begin{align*}
        \Psi^P
        :=
        \chartPDouble \circ \chartstdDoubleInv
        \colon
        C^\std
        &\to
        \set{(\xi, \eta) \in \CC^n \times \CC^n}
        {\xi \cdot \eta = -1}
        \\
        (\xi, \eta)
        &\mapsto
        \Psi^P(\xi,\eta)
        =
        \bigg(\frac{\xi}{1 - \xi \cdot \eta}, \eta\bigg).
    \end{align*}
    Then
    \begin{align*}
        \CoeffUnten{P}{Q}(a)
        &:=
        \frac{1}{(-4\pi^2)^n} \int_{\Psi^P(C^{2n})}
        \frac{\hat{a} \circ \chartPDoubleInv}{x^{P+1} y^{Q+1}}
        \D^n x\wedge \D^n y \\
        &=
        \frac{1}{(-4\pi^2)^n} \int_{(C')^{2n}}
        \frac{
          \hat{a}\circ \chartPDoubleInv\circ\Psi^P
        }{
          (x^{P+1} y^{Q+1})\circ \Psi^P
        }
        \D^n (x\circ \Psi^P)\wedge \D^n (y\circ\Psi^P) \\
        &=
        \frac{1}{(-4\pi^2)^n} \int_{(C')^{2n}}
        \frac{\hat{a}\circ \chartstdDoubleInv}{ x^{P+1} y^{Q+1} }
        (1 - x \cdot y)^{\abs{P+1}}
        \frac{\D^n x \wedge \D^n y}{(1 - x \cdot y)^{1+n}},
    \end{align*}
    which yields \eqref{eq:dualbasisstadrep} if $\abs{P} \ge \abs{Q}$.
    Note that the calculation of $\D^n (x\circ \Psi^P)\at{\xi,\eta}$
    is easy for $\eta = (1, 0 \ldots, 0)^\Transpose \in \CC^n$, which
    is already sufficient by symmetry. If $\abs{P} < \abs{Q}$, the argument is
    analogous using the $Q$-chart.
\end{proof}

Recall that $\AnalyticUnten$ is endowed with a Fr\'echet topology given
by the seminorms $\seminormUnten{K}{\argument}$ defined for all
compact $K \subseteq \DiscDouble$ in
Definition~\ref{definition:topAnalyticUnten}. We would of course like
to understand the relation between this topology and the topology
defined by the norms $\seminormUnten{\rho}{\argument}$ for all
$\rho > 0$.
\begin{proposition}
    \label{proposition:CoeffCont}%
    For every $P, Q \in \NN_0^n$ the linear functional
    $\CoeffUnten{P}{Q}\colon \AnalyticUnten \to \CC$ is
    continuous. Moreover, for every $\rho > 0$ there exists a compact
    $K\subseteq \DiscDouble$ such that the estimate
    \begin{equation}
        \label{eq:CoeffCont}
        \sum_{P,Q\in \NN_0^n}
        \abs{\CoeffUnten{P}{Q}(a)} \rho^{\abs{P+Q}}
        \le
        2^{2n+2} \seminormUnten{K}{a}
    \end{equation}
    holds for all $a \in \AnalyticUnten$.
\end{proposition}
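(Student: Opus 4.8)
The plan is to estimate $\CoeffUnten{P}{Q}(a)$ directly from its Cauchy\=/integral representation, but crucially to use the $P$\=/ and $Q$\=/chart formulas \eqref{eq:CoeffUntenPgeQ} and \eqref{eq:CoeffUntenQgeP} of Definition~\ref{definition:CoeffUnten} rather than the combined standard\=/chart formula \eqref{eq:dualbasisstadrep}, because in the former the integration radius may be chosen arbitrarily large. The first thing I would record is that, since $\chartPDouble$ is a biholomorphism from $\DiscDoubleDefP$ onto all of $\CC^n\times\CC^n$, the pullback $\hat{a}\circ\chartPDoubleInv$ is an \emph{entire} function on $\CC^n\times\CC^n$ for every $a\in\AnalyticUnten$; by \eqref{eq:chartPdoublerep} together with Proposition~\ref{proposition:CoeffUntenDualBasis}, the number $\CoeffUnten{P}{Q}(a)$ is exactly its Taylor coefficient in front of $x^P y^Q$ when $\abs{P}\ge\abs{Q}$, and likewise with $\chartQDoubleInv$ (entire as well) when $\abs{P}<\abs{Q}$.

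Next I would apply the $2n$\=/fold Cauchy estimate on the polytorus $\{\,\abs{x_i}=\abs{y_i}=R\,\}$ for an arbitrary radius $R>0$, which gives $\abs{\CoeffUnten{P}{Q}(a)}\le R^{-\abs{P+Q}}\sup\abs{\hat{a}\circ\chartPDoubleInv}$, the supremum taken over that polytorus. Since $\chartPDoubleInv$ is continuous, the image $K_R^P:=\chartPDoubleInv(\{\abs{x_i}=\abs{y_i}=R\})$ is a compact subset of $\DiscDouble$ and the supremum is just $\seminormUnten{K_R^P}{a}$. Taking any fixed radius (say $R=1$) already dominates each $\CoeffUnten{P}{Q}$ by a single Fr\'echet seminorm, proving the first assertion of the proposition.

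For the estimate \eqref{eq:CoeffCont}, given $\rho>0$ I would set $R:=2\rho$ and let $K:=K_R^P\cup K_R^Q$, with $K_R^Q$ the analogous compact image under $\chartQDoubleInv$. Using the $P$\=/chart bound when $\abs{P}\ge\abs{Q}$ and the $Q$\=/chart bound otherwise, monotonicity of the seminorm in its compact argument yields $\abs{\CoeffUnten{P}{Q}(a)}\le R^{-\abs{P+Q}}\seminormUnten{K}{a}$ \emph{uniformly} in $P,Q\in\NN_0^n$. Summing the resulting geometric series, $\sum_{P,Q\in\NN_0^n}(\rho/R)^{\abs{P+Q}}=(1-\rho/R)^{-2n}=2^{2n}$, gives $\sum_{P,Q}\abs{\CoeffUnten{P}{Q}(a)}\rho^{\abs{P+Q}}\le 2^{2n}\seminormUnten{K}{a}$, which is comfortably below the stated bound $2^{2n+2}\seminormUnten{K}{a}$.

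The main obstacle is conceptual rather than computational: the naive attempt using the standard\=/chart formula \eqref{eq:dualbasisstadrep} fails, because there the radius must stay below $1/\sqrt{n}$, and one checks that the induced geometric ratio $\rho(1+nr^2)/r$ cannot be pushed below $2\rho\sqrt{n}$, so the series diverges once $\rho$ is not small. Recognizing that $\hat{a}\circ\chartPDoubleInv$ and $\hat{a}\circ\chartQDoubleInv$ are entire --- so that $R$ may be taken arbitrarily large in the chart\=/adapted formulas --- is precisely what makes the sum converge for \emph{every} $\rho>0$. The only remaining care is to bundle the two chart cases into one compact set $K$, as done above.
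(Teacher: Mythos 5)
Your proof is correct and follows essentially the same route as the paper: Cauchy estimates in the $P$\=/ and $Q$\=/charts on polydiscs/polytori of radius $2\rho$, the compact set $K$ taken as the union of their images under $\chartPDoubleInv$ and $\chartQDoubleInv$, and summation of the resulting geometric series (your constant $2^{2n}$ is in fact sharper than the stated $2^{2n+2}$, which the paper also obtains only as an upper bound). Your added remark on why the standard\=/chart formula with radius below $1/\sqrt{n}$ cannot work is a correct and useful observation not made explicit in the paper.
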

\begin{proof}
    It is sufficient to show that the estimate \eqref{eq:CoeffCont}
    holds, which is much stronger than mere continuity of
    $\CoeffUnten{P}{Q}$. So let $\rho > 0$ be given and define $K^P$
    and $K^Q$ as the images of the polydiscs with radius $2\rho$ in
    $\CC^n \times \CC^n$ under the holomorphic maps $\chartPDoubleInv$
    and $\chartQDoubleInv$, respectively, and
    $K := K^P \cup K^Q \subseteq \DiscDouble$. Then $K$ is compact and
    from the usual estimate for the Cauchy integral over the boundary
    of a polydisc with radius $2\rho$ it follows for all
    $a \in \PolynomeUnten$ that
    \begin{align*}
        \sum_{P,Q\in \NN_0^n}
        \abs{\CoeffUnten{P}{Q}(a)} \rho^{\abs{P+Q}}
        \le
        \sum_{P,Q\in\NN_0^n}
        \seminormUnten{K}{a}
        \frac{\rho^{\abs{P+Q}}}{(2\rho)^{\abs{P+Q}}}
        =
        2^{2n+2} \seminormUnten{K}{a}.
    \end{align*}
\end{proof}
\begin{lemma}
    \label{lemma:monomialredest}%
    For all compact $K \subseteq \DiscDouble$ there exists a
    $\rho > 0$ such that the estimate
    $\seminormUnten{K}{\monomialDownRed{P}{Q}} \le \rho^{\abs{P+Q}}$
    holds for all $P, Q \in \NN_0^n$.
\end{lemma}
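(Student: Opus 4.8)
The plan is to pull the whole estimate back from $\DiscDouble$ to the level set $\LevelsetDouble \subseteq \CC^{1+n}\times\CC^{1+n}$, where the fundamental monomials turn into ordinary coordinate monomials and the geometric bound is obvious. Write $\tilde P,\tilde Q$ as in Lemma~\ref{lemma:topred}, so that $\monomialDownRed{P}{Q} = \monomialDown{\tilde P}{\tilde Q}$ by Definition~\ref{definition:BasisMonomialsUnten} and $\abs{\tilde P} = \abs{\tilde Q} = \max\{\abs P,\abs Q\}$. The first step is to identify the $\prodiscDouble$-pullback of the holomorphic extension. Since $\abs{\tilde P} = \abs{\tilde Q}$, the monomial $x^{\tilde P}y^{\tilde Q} \in \Holomorphic(\CC^{1+n}\times\CC^{1+n})$ is invariant under the action $z\acts(p,q) = (zp,q/z)$ and hence descends to a holomorphic function $g$ on $\DiscDouble$. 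Restricting $g$ to the diagonal gives $g\circ\DiagUnten([r]) = (x^{\tilde P}y^{\tilde Q})(r,\cc r) = r^{\tilde P}\cc r^{\tilde Q} = \ReductionMap{0}(\monomial{\tilde P}{\tilde Q})([r]) = \monomialDownRed{P}{Q}([r])$, so by injectivity of $\DiagUnten^*$ we get $g = \monomialDownRedHat{P}{Q}$ (equivalently, this can be read off directly from the standard-chart formula \eqref{eq:chartstddoublerep} using the defining relation $\metricDouble|_{\LevelsetDouble} = -1$). Thus
\begin{equation*}
    \monomialDownRedHat{P}{Q} \circ \prodiscDouble
    =
    \big(x^{\tilde P} y^{\tilde Q}\big)\big|_{\LevelsetDouble}.
\end{equation*}

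The second step, which I expect to be the only genuine difficulty, is to produce a \emph{single} compact set $\tilde K \subseteq \LevelsetDouble$ with $\prodiscDouble(\tilde K) = K$. This cannot be done by naively lifting $K$, since the fibres of $\prodiscDouble$ are the noncompact orbits $\CC_*$, so $\prodiscDouble$ is not proper; moreover one cannot simply cover $K$ by the standard-, $P$- and $Q$-chart domains, because these three do not exhaust $\DiscDouble$ (the locus $x^0 = y^0 = 0$ is missed). I would instead pass to a compact slice. The real-analytic subset $\Lambda := \set[\big]{(p,q)\in\LevelsetDouble}{\norm{p} = \norm{q}}$ meets every $\CC_*$-orbit, since $p,q\neq 0$ on $\LevelsetDouble$ and one rescales by the unique $\abs z$ with $\abs{z}^2 = \norm{q}/\norm{p}$; the residual action of the phases $\group{U}(1)\subseteq\CC_*$ preserves $\Lambda$, is free, and has quotient $\DiscDouble$. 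Hence $\prodiscDouble|_\Lambda\colon \Lambda \to \DiscDouble$ is a principal $\group{U}(1)$-bundle with compact fibre, so its projection is proper, and $\tilde K := \Lambda \cap \prodiscDouble^{-1}(K)$ is a compact subset of $\LevelsetDouble$ with $\prodiscDouble(\tilde K) = K$.

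With these two steps the bound is immediate. By compactness of $\tilde K$ there is a constant $M \ge 1$ bounding all coordinate functions $\abs{x^\mu}, \abs{y^\mu}$ on $\tilde K$, and then for all $P,Q\in\NN_0^n$
\begin{equation*}
    \seminormUnten{K}{\monomialDownRed{P}{Q}}
    =
    \sup_{\tilde K}\abs{x^{\tilde P} y^{\tilde Q}}
    \le
    M^{\abs{\tilde P}+\abs{\tilde Q}}
    =
    M^{2\max\{\abs P,\abs Q\}}
    \le
    M^{2\abs{P+Q}},
\end{equation*}
using $\prodiscDouble(\tilde K) = K$ together with the pullback identity for the first equality, and $\max\{\abs P,\abs Q\}\le \abs{P+Q}$ with $M\ge 1$ for the last. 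Setting $\rho := M^2$ proves the claim. The entire argument is elementary once $\tilde K$ is in hand; the compact-slice observation is what keeps it uniform over all fibres and avoids an awkward chart-by-chart analysis over the full atlas inherited from $\CC\PP^n\times\CC\PP^n$.
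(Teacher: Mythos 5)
Your proof is correct, but it takes a genuinely different and noticeably heavier route than the paper's. The paper disposes of the lemma in two lines: the seminorm $\seminormUnten{K}{\argument}$ is submultiplicative for the pointwise product (since $\widehat{ab}=\hat{a}\hat{b}$), and $\monomialDownRed{P}{Q}=\monomialDown{\tilde P}{\tilde Q}$ factors pointwise into $\max\{\abs{P},\abs{Q}\}\le\abs{P+Q}$ factors of the form $\monomialDown{E_\mu}{E_\nu}$ with $\mu,\nu\in\{0,\dots,n\}$, each bounded on $K$ by a single constant; taking $\rho$ to be the square of the largest of these finitely many constants gives the claim. No lift to $\LevelsetDouble$, no charts and no properness argument are needed, which is why the issue you flag --- that the standard, $P$- and $Q$-charts miss the locus $x^0=y^0=0$ --- never arises there. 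Your two ingredients are nevertheless both sound: the identity $\monomialDownRedHat{P}{Q}\circ\prodiscDouble=\big(x^{\tilde P}y^{\tilde Q}\big)\big|_{\LevelsetDouble}$ is exactly the degree\-/one statement the paper itself records in the proof of Proposition~\ref{proposition:embedding}, extended to all degrees via injectivity of $\DiagUnten^*$, and the slice $\Lambda=\set{(p,q)\in\LevelsetDouble}{\norm{p}=\norm{q}}$ does meet every $\CC_*$-orbit in a single free $\group{U}(1)$-orbit, so that $\prodiscDouble|_\Lambda$ is a proper surjection and $\tilde K$ is compact; the only point you gloss over is that the induced continuous bijection $\Lambda/\group{U}(1)\to\DiscDouble$ is a homeomorphism, which follows from the homeomorphism $\LevelsetDouble\cong\,]0,\infty[\,\times\Lambda$ given by $t=\sqrt{\norm{p}/\norm{q}}$. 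What your version buys is an explicit global realization of the fundamental monomials as coordinate monomials on the level set together with a reusable compact cross-section of $\prodiscDouble$; what it costs is replacing the one elementary observation the lemma is really about --- submultiplicativity of the sup-seminorm --- by a geometric detour.
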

\begin{proof}
    Given such a $K \subseteq \DiscDouble$, then one possible choice
    for $\rho$ is the maximum of
    $\seminormUnten{K}{\monomialDown{E_\mu}{E_\nu}}^2$ over all
    $\mu, \nu \in \{1, \ldots, n\}$.  Submultiplicativity of
    $\seminormUnten{K}{\argument}$ with respect to the pointwise
    product yields
    $\seminormUnten{K}{\monomialDown{P}{Q}} \le
    \sqrt{\rho}^{\abs{P+Q}}$
    for all $P, Q \in \NN_0^{1+n}$ and thus
    $\seminormUnten{K}{\monomialDownRed{P}{Q}} \le
    \sqrt{\rho}^{2\abs{P+Q}} = \rho^{\abs{P+Q}}$
    for all $P, Q \in \NN_0^n$.
\end{proof}
\begin{proposition}
    \label{proposition:PolynomeUntenRvsKTop}%
    On $\PolynomeUnten$ the locally convex topology defined by the
    seminorms $\seminormUnten{\rho}{\argument}$ for all $\rho>0$
    coincides with the subspace topology inherited from
    $\AnalyticUnten$.
\end{proposition}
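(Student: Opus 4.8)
The plan is to recall that two locally convex topologies generated by two families of seminorms coincide precisely when each seminorm of the one family is dominated, up to a multiplicative constant, by some seminorm of the other. So I would compare the two families on $\PolynomeUnten$: the norms $\seminormUnten{\rho}{\argument}$ for $\rho > 0$, and the sup-seminorms $\seminormUnten{K}{\argument}$ for compact $K \subseteq \DiscDouble$ that generate the subspace topology inherited from $\AnalyticUnten$. Since every element of $\PolynomeUnten$ is a \emph{finite} linear combination $a = \sum_{P,Q \in \NN_0^n} a_{P,Q}\, \monomialDownRed{P}{Q}$, no convergence subtleties arise and both domination estimates reduce to manipulations with finite sums.

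For the direction showing the subspace topology to be coarser I would fix a compact $K \subseteq \DiscDouble$ and invoke Lemma~\ref{lemma:monomialredest} to obtain a $\rho > 0$ with $\seminormUnten{K}{\monomialDownRed{P}{Q}} \le \rho^{\abs{P+Q}}$ for all $P, Q \in \NN_0^n$. Applying the triangle inequality for the sup-seminorm to the finite expansion of $a$ then gives
\[
    \seminormUnten{K}{a}
    \le
    \sum_{P,Q \in \NN_0^n} \abs{a_{P,Q}}\, \seminormUnten{K}{\monomialDownRed{P}{Q}}
    \le
    \sum_{P,Q \in \NN_0^n} \abs{a_{P,Q}}\, \rho^{\abs{P+Q}}
    =
    \seminormUnten{\rho}{a}.
\]

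For the converse domination the key observation is that on $\PolynomeUnten$ the expansion coefficients are recovered by the coordinate functionals: Proposition~\ref{proposition:CoeffUntenDualBasis} yields $\CoeffUnten{R}{S}(\monomialDownRed{P}{Q}) = \delta_{P,R}\,\delta_{Q,S}$, and hence $\CoeffUnten{P}{Q}(a) = a_{P,Q}$ for every $a \in \PolynomeUnten$. Fixing $\rho > 0$ and inserting this identity into the estimate of Proposition~\ref{proposition:CoeffCont} then produces a compact $K \subseteq \DiscDouble$ with
\[
    \seminormUnten{\rho}{a}
    =
    \sum_{P,Q \in \NN_0^n} \abs{\CoeffUnten{P}{Q}(a)}\, \rho^{\abs{P+Q}}
    \le
    2^{2n+2}\, \seminormUnten{K}{a}
\]
for all $a \in \PolynomeUnten$, which is exactly the reverse bound. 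Combining the two estimates finishes the proof.

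Since all the analytic content has already been packaged into Lemma~\ref{lemma:monomialredest} and Proposition~\ref{proposition:CoeffCont}, no individual step carries genuine computational difficulty. The only point that deserves care is the bookkeeping in the second direction: one must be sure that the Cauchy-integral functionals $\CoeffUnten{P}{Q}$ genuinely read off the coefficients relative to the fundamental monomials rather than some other basis, which is guaranteed precisely by the dual-basis identity of Proposition~\ref{proposition:CoeffUntenDualBasis}. I would therefore expect the mild obstacle to be conceptual rather than technical, namely recognizing that Proposition~\ref{proposition:CoeffCont}, originally formulated on all of $\AnalyticUnten$, restricts cleanly to $\PolynomeUnten$ through this identification of coefficients with values of the $\CoeffUnten{P}{Q}$.
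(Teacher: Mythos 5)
Your proposal is correct and follows essentially the same route as the paper: Lemma~\ref{lemma:monomialredest} for the bound $\seminormUnten{K}{a} \le \seminormUnten{\rho}{a}$ and Proposition~\ref{proposition:CoeffCont} (with the dual-basis identity of Proposition~\ref{proposition:CoeffUntenDualBasis} justifying $\CoeffUnten{P}{Q}(a) = a_{P,Q}$) for the converse. The paper uses that identity implicitly when writing $\seminormUnten{\rho}{a} = \sum_{P,Q} \abs{\CoeffUnten{P}{Q}(a)}\rho^{\abs{P+Q}}$; you have merely made this step explicit.
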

\begin{proof}
    Let a compact $K \subseteq \DiscDouble$ be given, then by the
    previous Lemma~\ref{lemma:monomialredest} there exists a
    $\rho > 0$ such that
    $\seminormUnten{K}{\monomialDownRed{P}{Q}} \le \rho^{\abs{P+Q}}$
    holds for all $P, Q \in \NN_0^n$, so
    \begin{equation*}
        \seminormUnten{K}{a}
        \le
        \sum_{P, Q \in \NN_0^n}
        \abs{a_{P,Q}}
        \seminormUnten{K}{\monomialDownRed{P}{Q}}
        \le
        \seminormUnten{\rho}{a}
    \end{equation*}
    holds for all
    $a = \sum_{P, Q \in \NN_0^n} a_{P,Q} \monomialDownRed{P}{Q}
    \in\PolynomeUnten$
    with complex coefficients $a_{P,Q}$.  The converse estimate
    follows directly from Proposition~\ref{proposition:CoeffCont},
    which shows that for every $\rho > 0$ there exists a compact
    $K\subseteq \DiscDouble$ such that
    \begin{equation*}
        \seminormUnten{\rho}{a}
        =
        \sum_{P, Q \in \NN_0^n}
        \abs{\CoeffUnten{P}{Q}(a)}\, \rho^{\abs{P+Q}}
        \le
        2^{2n+2}
        \seminormUnten{K}{a}
    \end{equation*}
    holds for all $a \in \PolynomeUnten$.
\end{proof}
\begin{lemma}
    \label{lemma:PolynomeUntenDense}%
    If $a \in \AnalyticUnten$ fulfils
    $\CoeffUnten{P}{Q}(a) = 0$ for all $P, Q \in \NN_0^n$, then
    $a = 0$.
\end{lemma}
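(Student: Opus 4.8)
The plan is to pull the holomorphic extension $\hat a\in\Holomorphic(\DiscDouble)$ back through the $P$- and $Q$-charts, where it becomes an \emph{entire} function whose Taylor coefficients are precisely the values $\CoeffUnten{P}{Q}(a)$, and then to combine the two chart expansions via the scaling symmetry underlying the $\CC_*$-quotient. First I would record that $\chartPDouble$ and $\chartQDouble$ are biholomorphisms from $\DiscDoubleDefP$ and $\DiscDoubleDefQ$ onto \emph{all} of $\CC^n\times\CC^n$; for $\chartPDouble$ one checks that $(\xi,\eta)$ is the image of the point on $\LevelsetDouble$ with representative $x^0=1+\xi\cdot\eta$, $x^i=\xi_i$, $y^0=1$, $y^i=\eta_i$. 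Hence $F:=\hat a\circ\chartPDoubleInv$ and $G:=\hat a\circ\chartQDoubleInv$ are entire on $\CC^n\times\CC^n$. By Definition~\ref{definition:PQCharts} together with \eqref{eq:chartPdoublerep} and \eqref{eq:chartQdoublerep}, the Cauchy integrals of Definition~\ref{definition:CoeffUnten} extract Taylor coefficients (this is the content of Proposition~\ref{proposition:CoeffUntenDualBasis}): the coefficient of $\xi^P\eta^Q$ in $F$ equals $\CoeffUnten{P}{Q}(a)$ whenever $\abs{P}\ge\abs{Q}$, and the coefficient of $\xi^P\eta^Q$ in $G$ equals $\CoeffUnten{P}{Q}(a)$ whenever $\abs{P}<\abs{Q}$. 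Under the hypothesis, $F$ therefore involves only monomials with $\abs{P}<\abs{Q}$, while $G$ involves only monomials with $\abs{P}\ge\abs{Q}$.

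The key step is to exploit the scaling $(\xi,\eta)\mapsto(\lambda\xi,\lambda^{-1}\eta)$, $\lambda\in\CC_*$, which is the holomorphic automorphism of $\DiscDouble$ induced by $\mathrm{diag}(1,\lambda,\dots,\lambda)$ on the $x$-block and its inverse on the $y$-block (it preserves $\metricDouble$ and commutes with the $\CC_*$-action defining $\DiscDouble$). A direct computation of the transition map gives
\[
  \chartQDouble\circ\chartPDoubleInv(\xi,\eta)
  =\Big(\tfrac{\xi}{1+\xi\cdot\eta},\,(1+\xi\cdot\eta)\,\eta\Big)
  \qquad\text{on }\{\xi\cdot\eta\neq-1\},
\]
and since $\xi\cdot\eta$ is invariant under the scaling, the scaling acts by the identical rule $(\tilde\xi,\tilde\eta)\mapsto(\lambda\tilde\xi,\lambda^{-1}\tilde\eta)$ in the $Q$-coordinates. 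Now fix $(\xi,\eta)$ with $\xi\cdot\eta\neq-1$ and set $H(\lambda):=F(\lambda\xi,\lambda^{-1}\eta)$, holomorphic in $\lambda\in\CC_*$. Expanding $F$ and grouping by the power of $\lambda$ (legitimate by absolute convergence of the entire series) expresses $H$ through \emph{strictly negative} powers of $\lambda$ only, since each monomial of $F$ has $\abs{P}-\abs{Q}<0$. Reading the same point through $G$ gives $H(\lambda)=G(\lambda\tilde\xi,\lambda^{-1}\tilde\eta)$, whose expansion has \emph{non-negative} powers of $\lambda$ only, since each monomial of $G$ has $\abs{P}-\abs{Q}\ge0$. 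By uniqueness of the Laurent expansion on $\CC_*$ every coefficient vanishes, so $H\equiv0$.

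Evaluating at $\lambda=1$ then gives $F(\xi,\eta)=0$ for all $(\xi,\eta)$ with $\xi\cdot\eta\neq-1$, so $F$ vanishes on a dense open subset of $\CC^n\times\CC^n$ and hence $F\equiv0$. Thus $\hat a$ vanishes on the open set $\DiscDoubleDefP$; since $\DiscDouble$ is connected (it contains the dense, connected chart domain $\DiscDoubleDefP\cong\CC^n\times\CC^n$), the identity theorem forces $\hat a\equiv0$ on $\DiscDouble$, whence $a=\hat a\circ\DiagUnten=0$. I expect the only genuinely delicate point to be the bookkeeping in the scaling step, namely verifying that one and the same $\lambda$-scaling intertwines the $P$- and $Q$-chart expansions, so that the two one-sided Laurent series really describe the same function $H$; once this is pinned down, uniqueness of the Laurent expansion closes the argument at once.
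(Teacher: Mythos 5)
Your proof is correct, and while it shares the paper's setup --- passing to the entire functions $F=\hat a\circ\chartPDoubleInv$ and $G=\hat a\circ\chartQDoubleInv$ and observing that the hypothesis kills the Taylor coefficients of $F$ with $\abs{P}\ge\abs{Q}$ and those of $G$ with $\abs{P}<\abs{Q}$ --- the decisive step is genuinely different. The paper argues by contradiction with a \emph{minimal total degree}: it transfers the $P$-chart series to the $Q$-chart through the transition map, notes that the factor $(1+\xi\cdot\eta)^{\abs{P}-\abs{Q}}$ (the paper misprints the sign here, but your formula for the transition is the correct one) only feeds into strictly higher total degrees, and then re-extracts the lowest-degree coefficients by Cauchy integrals over small circles to find they equal $\CoeffUnten{P}{Q}(a)=0$. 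You instead exploit the grading by $\abs{P}-\abs{Q}$ directly, realizing it through the one-parameter group of holomorphic automorphisms induced by $(x^0,x',y^0,y')\mapsto(x^0,\lambda x',y^0,\lambda^{-1}y')$, which indeed preserves $\metricDouble$, commutes with the defining $\CC_*$-action, and reads as $(\xi,\eta)\mapsto(\lambda\xi,\lambda^{-1}\eta)$ in \emph{both} charts; the function $H(\lambda)=\hat a(\sigma_\lambda(m))$ then has a Laurent expansion with only negative powers (from $F$) and only non-negative powers (from $G$), hence vanishes. All the points you flag as delicate check out: the regrouping by $\abs{P}-\abs{Q}$ is justified by the locally uniform absolute convergence of the entire Taylor series on the compact sets $\set{\sigma_\lambda(m)}{r\le\abs{\lambda}\le R}$, and the scaling preserves both chart domains, so the two one-sided series really represent the same $H$. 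What your route buys is the elimination of the induction on the minimal degree and of the second round of Cauchy integrals, replaced by the uniqueness of Laurent expansions; what it costs is the extra verification that the scaling descends to $\DiscDouble$ and acts by the same rule in both charts, which you have supplied. Either argument finishes by density of $\DiscDoubleDefP\cap\DiscDoubleDefQ$ in the connected manifold $\DiscDouble$.
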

\begin{proof}
    Given $a\in\AnalyticUnten$, then
    $\hat{a}\circ \chartPDoubleInv \in \Holomorphic(\CC^n \times
    \CC^n)$,
    so there exist unique complex coefficients $\tilde{a}_{P,Q}$ such
    that
    $\hat{a}\circ \chartPDoubleInv = \sum_{P, Q \in \NN_0^n}
    \tilde{a}_{P,Q} x^P y^Q$
    (and the series converges absolutely and locally uniformly). It is
    sufficient to show that all these coefficients vanish, because the
    domain of the $P$-chart is dense in $\DiscDouble$. From
    the definition of $\CoeffUnten{P}{Q}$ it is immediately clear that
    $\tilde{a}_{P,Q} = 0$ for all $P, Q \in \NN_0^n$ with
    $\abs{P} \ge \abs{Q}$.  Now assume that there is a non-vanishing
    coefficient $\tilde{a}_{P,Q}$, then there is a minimal
    $N \in \NN_0$ such that $\tilde{a}_{P,Q} \neq 0$ for some
    $P, Q \in \NN_0^n$ with $\abs{P} < \abs{Q}$ and $\abs{P+Q} = N$,
    so
    \begin{equation*}
        \hat{a} \circ \chartPDoubleInv
        =
        \sum_{
          \substack{
            P, Q \in \NN_0^n\\
            \abs{P} < \abs{Q}
            \textrm{ and }
            \abs{P+Q}\ge N
          }
        }
        \tilde{a}_{P,Q} x^P y^Q.
    \end{equation*}
    Consider
    $\Psi := \chartPDouble \circ \chartQDoubleInv|_{C^\Psi} \colon
    C^\Psi \to C^\Psi$
    with
    $C^\Psi := \set{(\xi,\eta) \in \CC^{n} \times \CC^{n}}{\xi \cdot
      \eta \neq 1}$,
    which is explicitly given by
    $\Psi(\xi, \eta) = \big(\xi(1 - \xi \cdot \eta), \frac{\eta}{1 -
      \xi \cdot \eta}\big)$
    and describes the change of coordinates between $P$- and
    $Q$-chart. Then
    $\hat{a} \circ \chartPDoubleInv \circ \Psi = \hat{a} \circ
    \chartQDoubleInv|_{C^\Psi}$
    can be represented as the absolutely and locally uniformly
    convergent series
    \begin{align*}
        \hat{a} \circ \chartQDoubleInv|_{C^\Psi}
        =
        \sum_{
          \substack{
            P, Q \in \NN_0^n\\
            \abs{P} < \abs{Q}
            \textrm{ and }
            \abs{P+Q} \ge N
          }
        }
        \tilde{a}_{P,Q}
        \,
        \frac{x^P y^Q}{(1 - x \cdot y)^{\abs{P}-\abs{Q}}}.
    \end{align*}
    It follows that
    $\tilde{a}_{P, Q} = \CoeffUnten{P}{Q}(a)$ for all
    $P, Q \in \NN_0^n$ with $\abs{P} < \abs{Q}$ and $\abs{P+Q} = N$ by
    evaluating the Cauchy-integral for $\CoeffUnten{P}{Q}(a)$ on
    sufficiently small circles in the $Q$-chart. So $\tilde{a}_{P, Q} = 0$ and
    we have a contradiction.
\end{proof}
\begin{theorem}[Completion of $\PolynomeUnten$]
    \label{theorem:Completion}%
    The Fr\'echet $^*$\=/algebra $\AnalyticUnten$ with the pointwise
    operations is the completion of the $^*$\=/algebra $\PolynomeUnten$
    with the pointwise operations and the locally convex topology
    defined by the seminorms $\seminormUnten{\rho}{\argument}$ for all
    $\rho > 0$. Moreover, the functions $\monomialDownRed{P}{Q}$ with
    $P, Q \in \NN_0^n$ form an absolute Schauder basis of
    $\AnalyticUnten$ and the coefficients of the expansion in this
    basis can be calculated explicitly by means of the integral
    formulas for $\CoeffUnten{P}{Q}$ from
    Definition~\ref{definition:CoeffUnten} or
    Proposition~\ref{proposition:CoeffUntenAlternativeFormula}:
    \begin{equation}
        \label{eq:SchauderBasisExpansion}
        a
        =
        \sum_{P, Q \in \NN_0^n}
        \CoeffUnten{P}{Q}(a)
        \,
        \monomialDownRed{P}{Q}
        =
        \sum_{P, Q \in \NN_0^n}
        \frac{\monomialDownRed{P}{Q}}{(-4\pi^2)^n}
        \oint_{C} \cdots \oint_{C}
        \hat{a}
        \,
        \frac{(1 - u \cdot v)^{\max\{\abs{P},\abs{Q}\}-1}}
        {v^{P+1} w^{Q+1}}
        \D^n u\wedge \D^n v,
    \end{equation}
    for all $a \in \AnalyticUnten$, where $u^1,\dots,u^n,v^1,\dots,v^n$ are the
    coordinate functions of the standard chart \eqref{eq:standardcoord}.
\end{theorem}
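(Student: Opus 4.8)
The plan is to assemble the ingredients already prepared in this subsection rather than to compute anything new. By Proposition~\ref{proposition:PolynomeUntenRvsKTop} the topology on $\PolynomeUnten$ defined by the norms $\seminormUnten{\rho}{\argument}$, $\rho > 0$, coincides with the subspace topology inherited from the Fr\'echet space $\AnalyticUnten$. Since $\AnalyticUnten$ is complete, it is the completion of $\PolynomeUnten$ precisely when $\PolynomeUnten$ is dense in it. I would therefore prove density \emph{constructively}, by showing that for every $a \in \AnalyticUnten$ the canonical series $\sum_{P,Q} \CoeffUnten{P}{Q}(a)\,\monomialDownRed{P}{Q}$ converges to $a$ in $\AnalyticUnten$; this one computation will simultaneously deliver density, the expansion~\eqref{eq:SchauderBasisExpansion}, and the Schauder basis property.

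For convergence, fix a compact $K \subseteq \DiscDouble$. Lemma~\ref{lemma:monomialredest} produces a $\rho > 0$ with $\seminormUnten{K}{\monomialDownRed{P}{Q}} \le \rho^{\abs{P+Q}}$ for all $P, Q \in \NN_0^n$, whence
\[
    \sum_{P,Q\in\NN_0^n} \abs{\CoeffUnten{P}{Q}(a)}\, \seminormUnten{K}{\monomialDownRed{P}{Q}}
    \le
    \sum_{P,Q\in\NN_0^n} \abs{\CoeffUnten{P}{Q}(a)}\, \rho^{\abs{P+Q}}.
\]
Applying Proposition~\ref{proposition:CoeffCont} with exactly this $\rho$ bounds the right-hand side by $2^{2n+2}\seminormUnten{K'}{a} < \infty$ for a suitable compact $K' \subseteq \DiscDouble$. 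Thus the series is absolutely convergent in every seminorm $\seminormUnten{K}{\argument}$, and by completeness of $\AnalyticUnten$ it converges there to some $b \in \AnalyticUnten$; the same chain of estimates, being uniform in the index set, is what makes the basis \emph{absolute}.

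To identify $b$ with $a$, I would use that each $\CoeffUnten{R}{S}$ is continuous on $\AnalyticUnten$ (again Proposition~\ref{proposition:CoeffCont}). Applying $\CoeffUnten{R}{S}$ termwise to the absolutely convergent series and invoking the duality $\CoeffUnten{R}{S}(\monomialDownRed{P}{Q}) = \delta_{P,R}\,\delta_{Q,S}$ of Proposition~\ref{proposition:CoeffUntenDualBasis} gives $\CoeffUnten{R}{S}(b) = \CoeffUnten{R}{S}(a)$ for all $R, S \in \NN_0^n$, so $\CoeffUnten{R}{S}(a-b) = 0$ for all $R, S$. Lemma~\ref{lemma:PolynomeUntenDense} then forces $a = b$. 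Since the partial sums lie in $\PolynomeUnten$, density follows, and the explicit integral form of the coefficients is read off directly from Definition~\ref{definition:CoeffUnten} and Proposition~\ref{proposition:CoeffUntenAlternativeFormula}.

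Uniqueness of the expansion, needed for the Schauder basis claim, is immediate from the same duality: if $a = \sum_{P,Q} c_{P,Q}\,\monomialDownRed{P}{Q}$ converges in $\AnalyticUnten$, applying the continuous $\CoeffUnten{R}{S}$ yields $c_{R,S} = \CoeffUnten{R}{S}(a)$, so the coefficients are forced to be the $\CoeffUnten{P}{Q}(a)$. The $^*$\=/algebra statement is then automatic, since $\PolynomeUnten$ is a $^*$\=/subalgebra of $\AnalyticUnten$ for the pointwise operations and these restrict to the given ones. The only genuine work is the two-sided bridge between the $\seminormUnten{\rho}{\argument}$ and $\seminormUnten{K}{\argument}$ families, i.e.\ the interplay of Lemma~\ref{lemma:monomialredest} and Proposition~\ref{proposition:CoeffCont}; I expect the subtle point to be the identification of the limit $b$ with $a$, where the injectivity input of Lemma~\ref{lemma:PolynomeUntenDense} is exactly what upgrades convergence of the canonical series to convergence \emph{towards} $a$.
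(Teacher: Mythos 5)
Your proposal is correct and follows essentially the same route as the paper's own proof: establish the topology coincidence via Proposition~\ref{proposition:PolynomeUntenRvsKTop}, prove absolute convergence of the canonical series from Proposition~\ref{proposition:CoeffCont} together with Lemma~\ref{lemma:monomialredest}, and identify the limit with $a$ by combining continuity of the $\CoeffUnten{R}{S}$, the duality of Proposition~\ref{proposition:CoeffUntenDualBasis}, and the injectivity statement of Lemma~\ref{lemma:PolynomeUntenDense}. The paper's argument is a more compressed version of exactly this; your added remarks on uniqueness of the expansion and on where the genuine work lies are accurate but not a deviation.
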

\begin{proof}
    Proposition~\ref{proposition:PolynomeUntenRvsKTop} shows that the
    $\seminormUnten{\rho}{\argument}$-topology on $\PolynomeUnten$
    coincides with the relative topology inherited from
    $\AnalyticUnten$. Moreover, given $a\in\AnalyticUnten$, then
    $\tilde{a} := \sum_{P, Q \in \NN_0^n} \CoeffUnten{P}{Q}(a) \,
      \monomialDownRed{P}{Q}$
    converges absolutely in $\AnalyticUnten$ due to the
    estimates in Proposition~\ref{proposition:CoeffCont} and
    Lemma~\ref{lemma:monomialredest}. As
    $\CoeffUnten{R}{S}(a)=\CoeffUnten{R}{S}(\tilde{a})$ for all
    $R, S \in \NN_0^n$ due to the continuity of $\CoeffUnten{R}{S}$
    shown in Proposition~\ref{proposition:CoeffCont} and due to the
    identity from Proposition~\ref{proposition:CoeffUntenDualBasis},
    it follows from Lemma~\ref{lemma:PolynomeUntenDense} that
    $a = \tilde{a}$. So $a$ is an element of the closure of
    $\PolynomeUnten$ in $\AnalyticUnten$.  As the functions
    $\monomialDownRed{P}{Q}$ with $P, Q \in \NN_0^n$ are linearly
    independent, this also shows that they form an absolute Schauder
    basis of $\AnalyticUnten$ and that the coefficients of $a$ with
    respect to this basis are the $\CoeffUnten{P}{Q}(a)$.
\end{proof}
Note that we have also shown that $\Holomorphic(\DiscDouble)$ is
isomorphic to $\Holomorphic(\CC^n \times \CC^n)$ as a Fr\'echet
\emph{space} via the isomorphism
\begin{equation}
    \Holomorphic(\DiscDouble)
    \ni
    \sum_{P, Q \in \NN_0^n} a_{P,Q} \monomialDownRedHat{P}{Q}
    \mapsto
    \sum_{P, Q \in \NN_0^n} a_{P,Q} x^P y^Q
    \in
    \Holomorphic(\CC^n \times \CC^n).
\end{equation}
However, this is not an isomorphism of Fr\'echet \emph{algebras} due to
the more complicated formula \eqref{eq:classicalproductUnten} for the
(commutative) product on $\PolynomeUnten$.
\begin{theorem}[The completed star product]
    \label{theorem:TheStarProductAlgebra}%
    For all $\hbar \in \HbarDef$ the product $\starRed[\hbar]$ on
    $\PolynomeUnten$ extends continuously to the completion
    $\AnalyticUnten$, such that $\AnalyticUnten$ with the product
    $\starRed[\hbar]$ becomes a Fr\'echet algebra. The product can explicitly
    be calculated as the series
    \begin{equation}
      a \starRed[\hbar] b
      =
      \sum_{P,Q,R,S \in \NN_0^n}
      a_{P,Q} b_{R,S}\,
      \monomialDownRed{P}{Q} \starRed[\hbar] \monomialDownRed{R}{S}\,,
    \end{equation}
    which converges absolutely and locally uniformly in $\hbar \in \HbarDef$
    for all $a,b\in\AnalyticUnten$ with coefficients $a_{P,Q} := \CoeffUnten{P}{Q}(a)$
    and $b_{R,S} := \CoeffUnten{R}{S}(b)$.
    If $\hbar$ is also
    real, then this Fr\'echet algebra is even a Fr\'echet $^*$\=/algebra
    with pointwise complex conjugation as $^*$-involution.
\end{theorem}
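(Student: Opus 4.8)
The plan is to extend $\starRed[\hbar]$ to the completion by continuity and density, and then to identify that extension with the claimed series by a direct absolute-convergence estimate. The first task is a continuity bound for $\starRed[\hbar]$ that is \emph{uniform} for $\hbar$ in compact subsets of $\HbarDef$. For $a, b \in \PolynomeUnten$ one has, by Definition~\ref{definition:starred} together with the fact that $\ReductionMapInv{\hbar}$ is a right inverse of $\ReductionMap{\hbar}$ (Lemma~\ref{lemma:topred}),
\begin{equation*}
    a \starRed[\hbar] b
    =
    \ReductionMap{\hbar}\big(
        \ReductionMapInv{\hbar}(a) \starWick[\hbar] \ReductionMapInv{\hbar}(b)
    \big).
\end{equation*}
Chaining the two estimates of Lemma~\ref{lemma:topred} (for $\ReductionMap{\hbar}$ and $\ReductionMapInv{\hbar}$) with the continuity estimate of Lemma~\ref{lemma:wickcont} (for $\starWick[\hbar]$), each of which holds uniformly for $\hbar$ in a fixed compact $K \subseteq \HbarDef$, yields for every $\rho > 0$ constants $C, \rho' > 0$ depending only on $\rho$ and $K$ with $\seminormUnten{\rho}{a \starRed[\hbar] b} \le C\, \seminormUnten{\rho'}{a}\, \seminormUnten{\rho'}{b}$ for all $\hbar \in K$ and $a, b \in \PolynomeUnten$. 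Since $\AnalyticUnten$ is the completion of $\PolynomeUnten$ (Theorem~\ref{theorem:Completion}), this continuous bilinear map extends uniquely to the completion; associativity and the unit property survive by continuity, so $(\AnalyticUnten, \starRed[\hbar])$ is a Fr\'echet algebra.

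Next I would read off the series. Using $\seminormUnten{\rho'}{\monomialDownRed{P}{Q}} = (\rho')^{\abs{P+Q}}$ (Definition~\ref{definition:NormUnten}), the bound above specialises to
\begin{equation*}
    \seminormUnten{\rho}{\monomialDownRed{P}{Q} \starRed[\hbar] \monomialDownRed{R}{S}}
    \le
    C\, (\rho')^{\abs{P+Q} + \abs{R+S}}
\end{equation*}
uniformly for $\hbar \in K$. With $a_{P,Q} = \CoeffUnten{P}{Q}(a)$ and $b_{R,S} = \CoeffUnten{R}{S}(b)$ the Schauder coefficients from Theorem~\ref{theorem:Completion}, it follows that
\begin{equation*}
    \sum_{P,Q,R,S \in \NN_0^n}
    \abs{a_{P,Q}}\, \abs{b_{R,S}}\,
    \seminormUnten{\rho}{\monomialDownRed{P}{Q} \starRed[\hbar] \monomialDownRed{R}{S}}
    \le
    C\, \seminormUnten{\rho'}{a}\, \seminormUnten{\rho'}{b}
    < \infty,
\end{equation*}
so the quadruple series converges absolutely in $\AnalyticUnten$. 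Because this dominating bound is independent of $\hbar \in K$ while each summand depends holomorphically on $\hbar$ through the Pochhammer factors in \eqref{eq:starred}, the convergence is moreover locally uniform in $\hbar \in \HbarDef$. That the sum equals $a \starRed[\hbar] b$ then follows by approximating $a$ and $b$ by finite partial sums of their Schauder expansions and passing to the limit in the continuous product, the rearrangement being licensed by the absolute convergence just shown.

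For real $\hbar$ I would promote this to a $^*$-algebra. The $^*$-compatibility noted after Definition~\ref{definition:starred} reads $(a \starRed[\hbar] b)^* = b^* \starRed[\cc{\hbar}] a^*$ on $\PolynomeUnten$, which for $\hbar \in \HbarDef \cap \RR$ becomes $(a \starRed[\hbar] b)^* = b^* \starRed[\hbar] a^*$. Pointwise complex conjugation maps $\AnalyticUnten$ to itself and is continuous for the Fr\'echet topology, since on holomorphic extensions it acts as $\hat{a} \mapsto \cc{\argument} \circ \hat{a} \circ \involutionAll$ on $\Holomorphic(\DiscDouble)$ and hence preserves the seminorms $\seminormUnten{K}{\argument}$. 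The involution relation therefore extends from the dense subspace $\PolynomeUnten$ to all of $\AnalyticUnten$, making $(\AnalyticUnten, \starRed[\hbar], \argument^*)$ a Fr\'echet $^*$-algebra.

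The principal obstacle is precisely the uniformity in $\hbar$: Theorem~\ref{theorem:starRedCont} asserts continuity of $\starRed[\hbar]$ only for each fixed $\hbar$, so one must verify that the constants furnished by Lemmas~\ref{lemma:topred} and \ref{lemma:wickcont} genuinely depend on the compact set $K \subseteq \HbarDef$ alone and not on the individual parameter. This uniform control is what upgrades plain absolute convergence of the series to absolute \emph{and} locally uniform convergence in the deformation parameter, and it is what will later underpin the holomorphic dependence of the product on $\hbar$.
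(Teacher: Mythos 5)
Your proposal is correct and takes essentially the same route as the paper: continuity of $\starRed[\hbar]$ (hence extension to the completion $\AnalyticUnten$) obtained by chaining the estimates of Lemma~\ref{lemma:topred} and Lemma~\ref{lemma:wickcont} through the factorization $a \starRed[\hbar] b = \ReductionMap{\hbar}\big(\ReductionMapInv{\hbar}(a) \starWick[\hbar] \ReductionMapInv{\hbar}(b)\big)$, absolute and locally uniform convergence of the quadruple series from the same bounds being uniform over compact subsets of $\HbarDef$, and the $^*$-involution for real $\hbar$ by continuity. You merely spell out the uniformity in $\hbar$ and the partial-sum identification that the paper's much terser proof leaves implicit.
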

\begin{proof}
    Continuity of $\starRed[\hbar]$ on $\PolynomeUnten$ has already
    been shown in Theorem~\ref{theorem:starRedCont}, so
    $\starRed[\hbar]$ extends continuously to the completion of
    $\PolynomeUnten$, which is $\AnalyticUnten$ by the previous
    Theorem~\ref{theorem:Completion}.

    From the construction of $\starRed[\hbar]$ out of the star product
    $\starWick[\hbar]$ on $\CC^{1+n}$ in Definition~\ref{definition:starred},
    the locally uniform estimate for $\starWick[\hbar]$ in
    Lemma~\ref{lemma:wickcont} and the locally uniform estimates for
    the reduction map $\ReductionMap{\hbar}$ in
    Lemma~\ref{lemma:topred} it follows that the explicit formula for
    $\starRed[\hbar]$ converges absolutely and locally uniformly in
    $\hbar \in \HbarDef$.

    Finally, if $\hbar \in \RR$,
    then pointwise complex conjugation is a $^*$-involution for this
    product by construction, and this also extends to the completion
    by continuity.
\end{proof}

%
%

\section{Properties of the Construction}
\label{section:properties}

In this section we investigate now some first properties of the
algebra we obtained by completion of the polynomial functions. In
particular, we examine the dependence on $\hbar$ which is now much
more delicate due to the presence of the poles on the negative axis:
unlike for the Weyl algebra on $\CC^{1+n}$ as discussed in
\cite{waldmann:2014a, schoetz.waldmann:2018a} we do not have an entire
deformation anymore. Even worse, the classical limit $\hbar = 0$ is
only a boundary point of the domain where the deformation is
holomorphic. This makes the discussion of the limit $\hbar
\longrightarrow 0$ more involved. In a next step we investigate the
classically positive linear functionals on $\AnalyticUnten$ and show
that the characters of $\AnalyticUnten$, i.e. its (maximal) spectrum,
are given by $\Discext$, finally showing that the completion is indeed
still a space of functions.  Then the deformation $\starRed[\hbar]$
for $\hbar>0$ is shown to have a faithful representation as unbounded
operators on a Hilbert space and we discuss the question whether and
how the infinitesimal action of $\lie{su}(1, n)$ exponentiates to the
global symmetry under the group $\group{SU}(1, n)$. Finally, we
shortly discuss an additional symmetry that occurs in the special case
$n=1$.

%
%

\subsection{\texorpdfstring{Holomorphic Dependence on $\hbar$ and Classical
Limit}{Holomorphic Dependence on h and Classical Limit}}
By now we have seen that for $\hbar = 0$, the completion
of the $^*$\=/algebra $\PolynomeUnten$ is the function algebra $\AnalyticUnten$,
and that the deformed product $\starRed[\hbar]$ extends continuously to
$\AnalyticUnten$ for all $\hbar \in \HbarDef$. We would like to
understand how $a \starRed[\hbar] b$ with $a, b \in \AnalyticUnten$
depends on $\hbar$, especially in the limit $\hbar\to0$.
\begin{theorem}[Holomorphic dependence on $\hbar$]
    \label{theorem:holomorphic}%
    For all $a, b \in \AnalyticUnten$ the function
    \begin{equation}
        \label{eq:HolomorphicInHbar}
        \HbarDef \ni \hbar
        \mapsto
        a \starWick[\hbar] b \in \AnalyticUnten
    \end{equation}
    is holomorphic. The singularities at $\hbar = -1/(2m)$ with
    $m \in \NN$ are at most poles of order $1$.
\end{theorem}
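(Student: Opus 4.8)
The plan is to work from the explicit series representation of the product established in Theorem~\ref{theorem:TheStarProductAlgebra},
\begin{equation*}
    a \starRed[\hbar] b
    =
    \sum_{P,Q,R,S \in \NN_0^n}
    \CoeffUnten{P}{Q}(a)\, \CoeffUnten{R}{S}(b)\,
    \monomialDownRed{P}{Q} \starRed[\hbar] \monomialDownRed{R}{S},
\end{equation*}
which converges absolutely and locally uniformly in $\hbar \in \HbarDef$ as an $\AnalyticUnten$-valued series. Holomorphy on $\HbarDef$ is then almost immediate: by \eqref{eq:starred} applied to the lifted monomials, together with the $\hbar$-independent re-expansion \eqref{eq:basisdecompositionUnten}, each summand $\monomialDownRed{P}{Q}\starRed[\hbar]\monomialDownRed{R}{S}$ is a finite linear combination of fundamental monomials whose coefficients are products of binomials with a single ratio of Pochhammer symbols $(\frac{1}{2\hbar})_\bullet$. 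The denominators of these ratios are $(\frac{1}{2\hbar})_{M_1}(\frac{1}{2\hbar})_{M_2}$, where $M_1 := \max\{\abs{P},\abs{Q}\}$ and $M_2 := \max\{\abs{R},\abs{S}\}$ are the degrees of the lifted monomials, and these vanish exactly at the points excluded from $\HbarDef$. Hence each summand is an $\AnalyticUnten$-valued holomorphic function on $\HbarDef$, and since $\AnalyticUnten$ is a Fr\'echet space a locally uniform limit of such maps is holomorphic.

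For the statement about the poles, fix $m \in \NN$, set $\hbar_0 := -1/(2m)$ and $z := 1/(2\hbar)$, which is a biholomorphism from a neighbourhood of $\hbar_0$ onto a neighbourhood of $-m$ with non-vanishing derivative, so orders of zeros and poles in $\hbar$ and in $z$ agree. The crucial point is a termwise bound on the pole order. Up to an $\hbar$-independent factor, each coefficient occurring in $\monomialDownRed{P}{Q}\starRed[\hbar]\monomialDownRed{R}{S}$ is of the form $(z)_{N}/\big[(z)_{M_1}(z)_{M_2}\big]$ with $N = M_1 + M_2 - \abs{T}$ and $\abs{T}\le \min\{M_1,M_2\}$, so $N \ge \max\{M_1,M_2\}$. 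Since $(z)_k = \prod_{j=0}^{k-1}(z+j)$ has at most a simple zero at $z=-m$, present precisely when $k > m$, the order of this ratio at $z=-m$ equals
\begin{equation*}
    \mathbbm{1}[N > m] - \mathbbm{1}[M_1 > m] - \mathbbm{1}[M_2 > m].
\end{equation*}
I claim this is at least $-1$: if at most one of $M_1 > m$, $M_2 > m$ holds, the last two indicators sum to at most $1$ and the expression is $\ge 0-1 = -1$; if both hold, then $N \ge \max\{M_1,M_2\} > m$, so $\mathbbm{1}[N>m]=1$ and the expression equals $1 - 2 = -1$. Thus every summand has at most a simple pole at $\hbar_0$.

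It remains to pass from this termwise bound to the infinite series, and this is the main obstacle. Multiplying by $(1 + 2m\hbar) = 2m(\hbar - \hbar_0)$, which has a simple zero at $\hbar_0$ and no other zero nearby, clears the simple pole of each summand, so each term of $(1+2m\hbar)\,(a\starRed[\hbar]b)$ is holomorphic on a disc $D$ around $\hbar_0$ with $\overline{D}\subseteq\CC_*$ avoiding the remaining poles $-1/(2k)$, $k\neq m$. The task is to show this new series still converges absolutely and locally uniformly on $D$; holomorphy of the sum then shows $(1+2m\hbar)(a\starRed[\hbar]b)$ is holomorphic near $\hbar_0$, i.e.\ $a\starRed[\hbar]b$ has a pole of order at most $1$. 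The difficulty is that the convergence estimates of Lemma~\ref{lemma:wickcont} and Lemma~\ref{lemma:topred} rest on the lower Pochhammer bound of Lemma~\ref{lemma:Pochhammer}, which degenerates as $\hbar\to\hbar_0$ because the factor $z+m$ of $(z)_k$ (for $k>m$) tends to $0$. The remedy I would use is to upgrade Lemma~\ref{lemma:Pochhammer} to a bound valid uniformly on $\overline{D}$ after extracting this single factor: writing $(z)_k = (z+m)^{\mathbbm{1}[k>m]}\prod_{\substack{0\le j<k\\ j\neq m}}(z+j)$ and noting that the factors $z+j$ with $j\neq m$ stay bounded away from $0$ and grow only factorially for $\hbar\in\overline{D}$, one obtains two-sided estimates $\alpha^k k! \le \abs{(z+m)^{-\mathbbm{1}[k>m]}(z)_k} \le \omega^k k!$ uniform in $\hbar\in\overline{D}$. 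The single extracted factor is precisely the one producing the at-most-simple pole isolated above, and the $(1+2m\hbar)$-prefactor cancels it, after which the combinatorial and factorial bookkeeping of Lemma~\ref{lemma:wickcont} and Lemma~\ref{lemma:topred} runs verbatim with these uniform constants. Since every $a,b\in\AnalyticUnten$ satisfy $\seminormUnten{\rho}{a},\seminormUnten{\rho}{b}<\infty$ for all $\rho>0$, the resulting bound on $\sup_{\hbar\in\overline{D}}\seminormUnten{\rho}{(1+2m\hbar)(\monomialDownRed{P}{Q}\starRed[\hbar]\monomialDownRed{R}{S})}$ is summable against $\abs{\CoeffUnten{P}{Q}(a)}\,\abs{\CoeffUnten{R}{S}(b)}$, which yields the required locally uniform convergence and completes the proof.
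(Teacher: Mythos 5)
Your proposal is correct, and its first half --- holomorphy on $\HbarDef$ from locally uniform convergence of the series, plus the termwise verification via \eqref{eq:starred} that each $\monomialDownRed{P}{Q}\starRed[\hbar]\monomialDownRed{R}{S}$ is rational in $\hbar$ with at most simple poles at the points $-1/(2k)$ --- is exactly the content of the paper's proof; your indicator-function computation of the order of $(z)_N/\big[(z)_{M_1}(z)_{M_2}\big]$ at $z=-m$ merely makes explicit what the paper asserts in one sentence for $a,b\in\PolynomeUnten$. Where you genuinely diverge is in passing the pole bound to the completion. You rightly observe that locally uniform convergence on $\HbarDef$ says nothing a priori about the behaviour at the excluded points, and you repair this by hand: a refined Pochhammer estimate, uniform on a closed disc around $\hbar_0=-1/(2m)$ after extracting the single vanishing factor $z+m$, followed by a re-run of the Lemma~\ref{lemma:wickcont}/Lemma~\ref{lemma:topred} bookkeeping for $(1+2m\hbar)(a\starRed[\hbar]b)$. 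This is workable (the estimate $\alpha^k k!\le\abs{(z+m)^{-\mathbbm{1}[k>m]}(z)_k}\le\omega^k k!$ does hold on such a disc, since $m!\,(k-1-m)!$ and $k!$ differ only by a factor geometric in $k$ for fixed $m$), but it is considerably more labour than needed, and your ``runs verbatim'' is the one place a referee would ask you to actually write out. The paper's one-line ``extends to the completion'' instead appeals to a soft argument that makes all of this unnecessary: the partial sums $s_N$, multiplied by $(1+2m\hbar)$, are $\AnalyticUnten$-valued holomorphic functions on a full disc $D$ around $\hbar_0$; they converge uniformly on a circle $\partial D'\subseteq D\setminus\{\hbar_0\}\subseteq\HbarDef$; and the vector-valued Cauchy integral formula (equivalently the maximum principle applied, after Hahn--Banach, to the differences $h_N-h_M$) then forces uniform convergence on all of the closed disc $D'$ to a holomorphic limit $h$, whence $a\starRed[\hbar]b=h/(1+2m\hbar)$ has at most a simple pole. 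So your route proves the same statement by redoing the convergence estimates near the singularity; its only advantage over the soft argument is that it yields explicit quantitative bounds on $(1+2m\hbar)(a\starRed[\hbar]b)$ near $\hbar_0$.
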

\begin{proof}
    If $a, b \in \PolynomeUnten$, then this all is clear because the
    explicit formula \eqref{eq:starred} for $\starRed[\hbar]$ shows
    that $a \starRed[\hbar] b$ is even a rational function of $\hbar$
    with finitely many poles of at most order $1$ only at the points
    $-1/(2m)$ with $m \in \NN$. From the explicit formula for
    $\starRed[\hbar]$ in Theorem~\ref{theorem:TheStarProductAlgebra}
    and its absolute and locally uniform convergence it follows
    that this result extends to the completion.
\end{proof}
Note that the above theorem does not give any information about the
classical limit $\hbar \to 0$. In fact, this limit is (contrary to the
case of the ordinary Wick star product on $\CC^{1+n}$) non-trivial
because the following example shows that there can indeed occur a pole
at every $\hbar = -1/(2m)$ with $m \in \NN$:
\begin{example}
    \label{example:PolesEverywhere}%
    Let $j,k\in\NN$ be given and write $E_1 := (0,1,0,\dots,0) \in \NN_0^{1+n}$, then
    \begin{align*}
        \monomialDown{j E_1}{j E_1}
        \starRed[\hbar]
        \monomialDown{k E_1}{k E_1}
        &=
        \sum_{t=0}^{\min\{j,k\}}
        \frac{
          (\frac{1}{2\hbar})_{j+k-t}
          t!
        }{
          (\frac{1}{2\hbar})_{j}(\frac{1}{2\hbar})_{k}
        }
        \binom{j}{t}
        \binom{k}{t}
        \monomialDown{(j+k-t)E_1}{(j+k-t)E_1}.\\
        \shortintertext{Moreover,}
        \frac{(z)_{j+k-t}}{(z)_j(z)_k} &= \frac{\prod_{i=\max\{j,k\}}^{j+k-t-1} (z+i)}{\prod_{i=0}^{\min\{j,k\}-1} (z+i)}
    \end{align*}
    has first order poles at all $z = -m$ with $m \in \big\{0, \ldots, \min\{j,k\}-1\big\}$
    and residue
    \begin{equation*}
      (-1)^m \frac{(j+k-t-m-1)!}{m!(j-m-1)!(k-m-1)!},
    \end{equation*}
    whose sign only depends on $m$, but not on $j,k$ or $t$. This implies that
    if $a,b\in \AnalyticUnten$ are of the form
    \begin{equation*}
      a = \sum_{j=0}^\infty a_j \monomialDown{jE_1}{jE_1}
      \quad\quad\text{and}\quad\quad
      b = \sum_{k=0}^\infty b_k \monomialDown{kE_1}{kE_1},
    \end{equation*}
    with positive coefficients $a_j,b_k \in\,\,]0,\infty[$, e.g. $a_j = b_j = 1/j!$, then
    \begin{equation*}
      a\starRed[\hbar] b
      =
      \sum_{j,k=0}^\infty a_j b_k
      \big(
        \monomialDown{j E_1}{j E_1}
        \starRed[\hbar]
        \monomialDown{k E_1}{k E_1}
      \big)
    \end{equation*}
    has simple poles at each of the points $\hbar=-1/(2m)$, $m \in \NN$.
\end{example}
\begin{lemma}
    \label{lemma:classical1}%
    For all $p, s \in \NN_0$ and $x \in [0,1]$, the estimate
    \begin{equation}
        \label{eq:TechnicalEstimate}
        1
        \le
        \frac{
          \prod_{i=0}^{p+s-1}(1+xi)
        }{
          \big(\prod_{j=0}^{p-1}(1+xj)\big) \big(\prod_{k=0}^{s-1}(1+xk)\big)
        }
        \le
        1+ x 2^{p+s}
    \end{equation}
    holds.
\end{lemma}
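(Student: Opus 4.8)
The plan is to first simplify the central expression. Writing $R(x)$ for the quotient in the middle of \eqref{eq:TechnicalEstimate}, the first $p$ factors of the numerator cancel against $\prod_{j=0}^{p-1}(1+xj)$, leaving the telescoped form
\[ R(x) = \prod_{k=0}^{s-1}\frac{1+x(p+k)}{1+xk}. \]
From this the lower bound is immediate: each factor is $\ge 1$ because $p+k \ge k$ and $x \ge 0$, so $R(x)\ge 1$. I would also record at the outset that $R$ is symmetric under $p \leftrightarrow s$ (manifest from the original quotient, which depends only on the unordered pair), as is the target bound $1+x2^{p+s}$; hence I may assume without loss of generality that $p \ge s$.

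For the upper bound the key reformulation is the substitution $t = 1/x \in [1,\infty)$ (the case $x=0$ being trivial). Then $R = \prod_{k=0}^{s-1}\frac{t+p+k}{t+k}$ is a rational function of $t$ tending to $1$ as $t\to\infty$, with $R(1) = \prod_{k=0}^{s-1}\frac{p+1+k}{1+k} = \binom{p+s}{p}$. Since $R \le 1+x2^{p+s}$ rewrites as $t\,(R-1)\le 2^{p+s}$, the whole estimate reduces to showing that $g(t) := t\,(R(t)-1)$ is non-increasing on $[1,\infty)$: granting this, $g(t)\le g(1) = \binom{p+s}{p}-1 \le 2^{p+s}-1$, because $\binom{p+s}{p}$ is a single summand of $\sum_j \binom{p+s}{j} = 2^{p+s}$.

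To prove that $g$ is non-increasing I would induct on $s$ (with $p$ fixed and $p\ge s$). Writing $R_s$ and $g_s$ to emphasize the $s$-dependence, the one-step recursion $R_s = R_{s-1}\bigl(1+\tfrac{p}{t+s-1}\bigr)$ yields
\[ g_s(t) = g_{s-1}(t) + p\,\psi_s(t), \qquad \psi_s(t) = \frac{t\,R_{s-1}(t)}{t+s-1} = \frac{t+p}{t+s-1}\prod_{k=1}^{s-2}\frac{t+p+k}{t+k} \quad (s\ge 2). \]
Each factor $\frac{t+p+k}{t+k} = 1+\frac{p}{t+k}$ is positive and decreasing in $t$, and the leading factor $\frac{t+p}{t+s-1} = 1+\frac{p-(s-1)}{t+s-1}$ is decreasing precisely because $p \ge s-1$ (here the normalization $p\ge s$ is used). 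A product of positive decreasing functions is decreasing, so $\psi_s$ is decreasing; together with the inductive hypothesis that $g_{s-1}$ is non-increasing (the base cases $g_0 = 0$ and $g_1 = p$ being constant) this shows $g_s$ is non-increasing, completing the induction.

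The main obstacle is exactly the point where the normalization $p\ge s$ enters. A naive single-variable induction on $p+s$, or any crude termwise or exponential bound such as $R \le (1+xp)^s$, overshoots the sharp value $\binom{p+s}{p}$ at $x=1$ and fails on the diagonal $p=s$ (for instance $(1+p)^p$ already exceeds $2^{2p}$ once $p\ge 4$). What rescues the estimate is not the generous constant $2^{p+s}$ by itself but the \emph{convexity}-type monotonicity of $g$, which forces $R(x)-1$ to grow at most linearly in $x$ from $0$ at $x=0$ up to $\binom{p+s}{p}-1$ at $x=1$. The product-of-M\"obius-factors structure of $\psi_s$ makes this monotonicity transparent, but only after the symmetry is used to arrange $p\ge s$ so that every factor is genuinely decreasing.
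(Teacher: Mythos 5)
Your proof is correct. It shares the paper's overall skeleton --- telescoping the quotient to $\prod_{k=0}^{s-1}\frac{1+x(p+k)}{1+xk}$, reducing by symmetry to $p\ge s$, inducting on $s$, and landing on the sharp value $\binom{p+s}{s}$ at $x=1$ before relaxing it to $2^{p+s}$ --- but the execution of the induction step is genuinely different. The paper proves the intermediate bound $\prod_{k=0}^{s-1}\frac{1+x(p+k)}{1+xk}\le 1+x\binom{p+s}{s}-x$ directly, by an algebraic manipulation of the extra factor $\frac{1+x(p+s)}{1+xs}$; in your notation (with $t=1/x$) this is exactly the statement $g(t)\le g(1)$, but the verification is a chain of identities whose final inequality quietly uses $p\ge s$ through $\binom{p+s}{s}\ge 1+s$. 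You instead establish the strictly stronger claim that $g(t)=t\,(R(t)-1)$ is non-increasing on $[1,\infty)$, via the recursion $g_s=g_{s-1}+p\,\psi_s$ and the observation that $\psi_s$ is a product of positive non-increasing M\"obius factors, with the hypothesis $p\ge s$ entering transparently as the condition that the one exceptional factor $\frac{t+p}{t+s-1}$ be non-increasing. Your route is more conceptual and makes the role of the normalization visible; the paper's is shorter on the page and avoids the change of variables. Both arrive at the same constant, since $\binom{p+s}{s}-1\le 2^{p+s}-1$.
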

\begin{proof}
    Without loss of generality we can assume that $p \ge s$. Note that
    \begin{equation*}
        \frac{
          \prod_{i=0}^{p+s-1}(1+xi)
        }{
          \big(\prod_{j=0}^{p-1}(1+xj)\big)
          \big(\prod_{k=0}^{s-1}(1+xk)\big)
        }
        =
        \prod_{k=0}^{s-1} \frac{1+x(p+k)}{1+xk}
    \end{equation*}
    holds. So the first estimate $1 \le \ldots$ is trivial, for
    the second one we will show by induction over $s$ that
    $\prod_{k=0}^{s-1} \frac{1+x(p+k)}{1+xk} \le 1 +
    x\binom{p+s}{s}-x$
    holds. If $s = 0$ or $s = 1$, then this is certainly true, and if
    it holds for one $s \in \NN$ with $s < p$, then also for $s+1$,
    because then
    \begin{align*}
        \prod_{k=0}^{s} \frac{1+x(p+k)}{1+xk}
        &\le
        \bigg(1+x\binom{p+s}{s}-x \bigg) \frac{1+x(p+s)}{1+xs}
        \\
        &=
        1
        +
        x
        \binom{p+s}{s}
        \frac{1+xp+xs}{1+xs}
        -
        x
        +
        \frac{(1-x)xp}{1+xs}
        \\
        &=
        1
        +
        x
        \binom{p+s}{s}
        \frac{1+p+s}{1+s}
        -
        x
        \binom{p+s}{s}
        \frac{(1-x)p}{(1+xs)(1+s)}
        -
        x
        +
        \frac{(1-x)xp}{1+xs}
        \\
        &\le
        1
        +
        x\binom{p+s+1}{s+1}
        -
        x.
    \end{align*}
\end{proof}
\begin{lemma}
    \label{lemma:classical2}%
    For all $t, k_0 \in \NN_0$ and all $x \in [0, 1]$, the estimate
    \begin{equation*}
        \frac{x^t t!}{\prod_{k=k_0}^{k_0+t-1}(1+xk)}
        \le
        x^m 2^t m!
    \end{equation*}
    holds for all $m \in \{0, \ldots, t\}$.
\end{lemma}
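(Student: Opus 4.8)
The plan is to reduce first to the case $k_0 = 0$ and then to prove the inequality there by a single factorwise estimate of a telescoping product, with the dependence on $m$ entering only at the last step.

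For the reduction I would exploit that the denominator is monotone in its starting index. Reindexing $\prod_{k=k_0}^{k_0+t-1}(1+xk) = \prod_{j=0}^{t-1}\bigl(1+x(k_0+j)\bigr)$ and comparing factorwise with $\prod_{j=0}^{t-1}(1+xj)$, the elementary bound $1+x(k_0+j) \ge 1+xj$ (valid since $x \ge 0$ and $k_0 \ge 0$) gives $\prod_{k=k_0}^{k_0+t-1}(1+xk) \ge \prod_{k=0}^{t-1}(1+xk)$. (Equivalently, this is the left inequality of Lemma~\ref{lemma:classical1} with $p = k_0$, $s = t$ after dividing out $\prod_{j=0}^{k_0-1}(1+xj)$.) Hence
\[
\frac{x^t t!}{\prod_{k=k_0}^{k_0+t-1}(1+xk)} \le \frac{x^t t!}{\prod_{k=0}^{t-1}(1+xk)},
\]
so it suffices to treat $k_0 = 0$.

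For $k_0 = 0$ I would write the quotient as a product of $t$ factors using $t! = \prod_{k=0}^{t-1}(k+1)$, namely
\[
\frac{x^t t!}{\prod_{k=0}^{t-1}(1+xk)} = \prod_{k=0}^{t-1} \frac{x(k+1)}{1+xk},
\]
and then split the product at the index $m$. For the first $m$ factors ($k = 0, \dots, m-1$) I use $1+xk \ge 1$ to bound $\tfrac{x(k+1)}{1+xk} \le x(k+1)$, whose product is exactly $x^m m!$. For the remaining $t-m$ factors I claim each is at most $2$: indeed $x(k+1) \le 2(1+xk)$ is equivalent to $x - 2 \le xk$, which holds because $x \le 1 < 2 \le 2 + xk$. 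Multiplying the two groups yields $\prod_{k=0}^{t-1}\tfrac{x(k+1)}{1+xk} \le x^m m!\,2^{t-m} \le x^m 2^t m!$, which is the assertion.

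The estimate is elementary and I do not expect a genuine obstacle. The only points needing a little care are making the argument uniform in $m \in \{0,\dots,t\}$ (the $m$-dependence appears solely in the final splitting step, while the reduction to $k_0 = 0$ is independent of $m$) and observing that the per-factor bound $\tfrac{x(k+1)}{1+xk} \le 2$ holds for every $k \ge 0$ and every $x \in [0,1]$ without any division, so that the boundary values $x = 0$ and $t = 0$ (empty products equal to $1$) are covered automatically.
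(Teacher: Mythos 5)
Your proof is correct and follows essentially the same route as the paper's: both first reduce to $k_0=0$ via the monotonicity $1+xk \ge 1+x(k-k_0)$ and then bound the resulting quotient by an elementary rearrangement of the product of $t$ factors. Your factorwise splitting at the index $m$ (first $m$ factors bounded by $x(k+1)$, the rest by $2$) is in fact marginally cleaner, since it covers $m=0$ and $t=0$ uniformly, whereas the paper's intermediate bound $x^m t(m-1)!$ implicitly requires $m\ge 1$ and treats $t=m=0$ as a separate case.
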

\begin{proof}
    As $\frac{1}{1+xk} \le \frac{1}{1+x(k-k_0)}$ it is sufficient to
    prove the estimate for the special case $k_0=0$. If $t=m=0$ then
    this is certainly true, and otherwise $t\ge 1$ and we have
    \begin{equation*}
        \frac{x^t t!}{\prod_{k=0}^{t-1}(1+xk)}
        =
        x^m
        \bigg(\prod_{k=0}^{m-1} \frac{1}{1+xk}\bigg)
        \bigg(\prod_{k=m}^{t-1} \frac{xk}{1+xk}\bigg)
        t (m-1)!
        \le
        x^m t (m-1)!
        \le
        x^m 2^t m!.
    \end{equation*}
\end{proof}
\begin{theorem}[Classical limit]
    \label{theorem:ClassicalLimit}%
    For all $a, b \in \AnalyticUnten$ the functions
    \begin{equation}
        \label{eq:ClassicalLimitProduct}
        ]0,\infty[\,\,\ni \hbar
        \mapsto
        a \starRed[\hbar] b \in \AnalyticUnten
        \quad
        \textrm{and}
        \quad
        ]0,\infty[\,\,\ni \hbar
        \mapsto
        \frac{\I}{\hbar}\kom{a}{b}_{\starRed[\hbar]}
        \in \AnalyticUnten
    \end{equation}
    are continuous and can be extended continuously to $[0,\infty[$ by
    \begin{equation}
        \label{eq:ClassicalLimitTheLimits}
        \lim_{\hbar \to 0^+} a \starRed[\hbar] b
        =
        a b
        \quad
        \textrm{and}
        \quad
        \lim_{\hbar \to 0^+}
        \frac{\I}{\hbar}\kom{a}{b}_{\starRed[\hbar]}
        =
        \poi{a}{b}.
    \end{equation}
\end{theorem}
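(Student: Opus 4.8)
The plan is to reduce both limits to the explicit action of $\starRed[\hbar]$ on the spanning monomials $\monomialDown{P}{Q}$, for which the closed formula \eqref{eq:starred} is available, and to prove two estimates that are uniform for $\hbar$ in a right neighbourhood $]0,\hbar_0]$ of $0$: an estimate of the form $\seminormUnten{\rho}{a\starRed[\hbar]b - ab}\le C\hbar\,\seminormUnten{\rho'}{a}\seminormUnten{\rho'}{b}$ together with its analogue for $\frac{\I}{\hbar}\kom{a}{b}_{\starRed[\hbar]} - \poi{a}{b}$. Granting these, the limits \eqref{eq:ClassicalLimitTheLimits} are immediate, continuity at $0^+$ is exactly their content, and continuity on the open half-line $]0,\infty[$ is already supplied by the holomorphic dependence on $\hbar$ from Theorem~\ref{theorem:holomorphic} (note $]0,\infty[\subseteq\HbarDef$, and $\hbar\mapsto\frac{\I}{\hbar}$ is holomorphic away from $0$, so the commutator map is a product of holomorphic $\AnalyticUnten$-valued and scalar functions). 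Because the coefficient functionals $\CoeffUnten{P}{Q}$ are continuous and the basis expansion converges absolutely (Theorem~\ref{theorem:Completion}), it suffices to prove the two estimates on the dense subalgebra $\PolynomeUnten$, i.e. for $a,b$ ranging over the $\monomialDown{P}{Q}$.

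First I would analyse the coefficients in \eqref{eq:starred}. Writing $x:=2\hbar\in[0,1]$ and using $(\tfrac{1}{2\hbar})_m = x^{-m}\prod_{k=0}^{m-1}(1+xk)$, the coefficient of $\monomialDown{P+R-T}{Q+S-T}$ becomes $(-1)^{T_0}\,x^{\abs{T}}\,\tfrac{\prod_{k=0}^{\abs{P}+\abs{S}-\abs{T}-1}(1+xk)}{\prod_{k=0}^{\abs{P}-1}(1+xk)\prod_{k=0}^{\abs{S}-1}(1+xk)}\,T!\binom{P}{T}\binom{S}{T}$. For $T=0$ this is precisely the ratio bounded in Lemma~\ref{lemma:classical1} by $[1,\,1+x2^{\abs{P}+\abs{S}}]$; since the $T=0$ term of \eqref{eq:starred} is the pointwise product $\monomialDown{P}{Q}\cdot\monomialDown{R}{S} = \monomialDown{P+R}{Q+S}$, the leading contribution deviates from $ab$ by $O(\hbar)$ with merely geometric growth $2^{\abs{P}+\abs{S}}$ in the degrees. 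For $\abs{T}\ge1$ the decisive tool is Lemma~\ref{lemma:classical2}: pulling the factor $\tfrac{x^{\abs{T}}T!}{\prod_{k=0}^{\abs{T}-1}(1+xk)}$ out of the coefficient and using $T!\le\abs{T}!$, the lemma bounds it by $x\,2^{\abs{T}}$ (choice $m=1$), so the factorial $T!$ is cancelled against the Pochhammer denominator and what remains is a factor $O(\hbar)$ times purely geometric quantities, the residual product ratio being again controlled by Lemma~\ref{lemma:classical1}. Re-expanding each output $\monomialDown{E}{F}$ in the fundamental basis via \eqref{eq:basisdecompositionUnten} only introduces further geometric factors (binomials and multinomials $\le(1+n)^{\abs{E}}$), so all of these are absorbed into an enlarged radius $\rho'$, yielding the first estimate and hence $\lim_{\hbar\to0^+}a\starRed[\hbar]b = ab$.

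For the commutator I would identify the $\hbar^0$-term as the Poisson bracket by passing through the reduction homomorphism. Since $\ReductionMap{\hbar}$ is an algebra homomorphism from $\big(\PolynomeOben[\group{U}(1)],\starWick[\hbar]\big)$ to $\big(\PolynomeUnten,\starRed[\hbar]\big)$ (Definition~\ref{definition:starred} and Proposition~\ref{proposition:quantumredkernel}), one has $\frac{\I}{\hbar}\kom{a}{b}_{\starRed[\hbar]} = \ReductionMap{\hbar}\big(\frac{\I}{\hbar}\kom{a'}{b'}_{\starWick[\hbar]}\big)$ for any preimages $a',b'\in\PolynomeOben[\group{U}(1)]$. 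As $\frac{\I}{\hbar}\kom{\argument}{\argument}_{\starWick[\hbar]} = \poi{\argument}{\argument}+\hbar(\cdots)$ upstairs with remainder polynomial in $\hbar$, and $\ReductionMap{\hbar}(\monomial{P}{Q}) = \prod_{k=0}^{\abs{P}-1}(1+2\hbar k)\,\monomialDown{P}{Q}\to\ReductionMap{0}(\monomial{P}{Q})$ coefficientwise as $\hbar\to0^+$, the limit on polynomials equals $\ReductionMap{0}(\poi{a'}{b'}) = \poi{\ReductionMap{0}(a')}{\ReductionMap{0}(b')} = \poi{a}{b}$, using that $\ReductionMap{0}$ is a Poisson homomorphism (Definition~\ref{definition:ClassicalReductionMap}). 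The uniform extension to $\AnalyticUnten$ runs exactly as before: in the antisymmetric combination the $T=0$ contributions cancel, so after division by $\hbar$ only the $\abs{T}\ge1$ terms survive, each carrying one power of $x=2\hbar$ by Lemma~\ref{lemma:classical2}; subtracting the $\abs{T}=1$ terms (whose $\hbar\to0^+$ values assemble into $\poi{a}{b}$) and bounding both the $O(\hbar)$ corrections of their coefficient ratios via Lemma~\ref{lemma:classical1} and the full $\abs{T}\ge2$ remainder via Lemma~\ref{lemma:classical2} (now with $m=2$) gives the second estimate.

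The genuinely delicate point throughout is the uniformity down to $\hbar=0$. The continuity estimate of Lemma~\ref{lemma:topred}, and with it the Pochhammer bounds of Lemma~\ref{lemma:Pochhammer}, degenerate there, since $\tfrac{1}{2\hbar}\to\infty$ escapes every compact subset of $\CC\setminus(-\NN_0)$; in particular the factorial-suppressing \emph{lower} Pochhammer bound that makes $\starRed[\hbar]$ continuous for fixed $\hbar\neq0$ is no longer available, so the estimate cannot simply be transported upstairs. This is exactly the gap filled by Lemmas~\ref{lemma:classical1} and~\ref{lemma:classical2}, which are stated in the rescaled variable $x=2\hbar\in[0,1]$: the former yields the $O(\hbar)$ deviation of the leading ($T=0$) coefficient and bounds the residual product ratios, while the latter uses the denominator $\prod_{k=0}^{\abs{T}-1}(1+xk)$ to cancel $T!$ so that every $\abs{T}\ge1$ term is $O(\hbar)$ with only geometric degree-dependence, which enlarging $\rho$ then absorbs. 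I expect the bulk of the work to be precisely this bookkeeping — turning the pointwise coefficient bounds into the two seminorm estimates, including the $\monomialDownRed{}{}$-re-expansion — but no idea beyond Lemmas~\ref{lemma:classical1} and~\ref{lemma:classical2} should be needed.
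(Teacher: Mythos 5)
Your proposal follows essentially the same route as the paper's proof: continuity on $]0,\infty[$ via Theorem~\ref{theorem:holomorphic}, reduction to the monomials via absolute convergence of the basis expansion, the split of \eqref{eq:starred} into the $T=0$ term (controlled by Lemma~\ref{lemma:classical1}) and the $\abs{T}\ge1$ terms (controlled by Lemma~\ref{lemma:classical2} with $m=1$ for the product and $m=2$ for the commutator, after the $T=0$ cancellation). The only cosmetic difference is that you phrase the final estimate purely in the $\seminormUnten{\rho}{\argument}$ norms with a re-expansion via \eqref{eq:basisdecompositionUnten}, whereas the paper bounds the output monomials directly with $\seminormUnten{K}{\argument}$ and submultiplicativity; since the two seminorm systems are equivalent, this changes nothing of substance.
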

\begin{proof}
    The continuity of these functions on $]0, \infty[$ is a direct
    consequence of the holomorphic dependence of $\starRed[\hbar]$ on
    $\hbar$ from Theorem~\ref{theorem:holomorphic}.  For the limit
    $\hbar \to 0^+$ we first consider only products of
    $\monomialDown{P}{Q}$ and $\monomialDown{R}{S}$ with
    $P, Q, R, S \in \NN_0^{1+n}$ as well as $\abs{P} = \abs{Q}$ and
    $\abs{R} = \abs{S}$. It will be helpful to use both the
    fundamental system of continuous seminorms
    $\seminormUnten{K}{\argument}$ of $\AnalyticUnten$ with $K$
    running over all compact subsets of $\DiscDouble$ and the
    fundamental system of continuous seminorms
    $\seminormUnten{\rho}{\argument}$ for all $\rho > 0$, extended
    continuously from $\PolynomeUnten$ to $\AnalyticUnten$. Recall
    that these two systems are equivalent by
    Theorem~\ref{theorem:Completion}.  Let $\hbar \in \,\,]0, 1/2]$ and a
    compact $K\subseteq \DiscDouble$ be given, then the estimate
    \begin{align*}
        \seminormUnten[\big]{K}{
          &\monomialDown{P}{Q} \starRed[\hbar] \monomialDown{R}{S}
          -
          \monomialDown{P}{Q} \monomialDown{R}{S}
        }
        \le
        \\
        &\le
        \bigg|
        \frac{
          (\frac{1}{2\hbar})_{\abs{P+S}}
        }{
          (\frac{1}{2\hbar})_{\abs{P}}
          (\frac{1}{2\hbar})_{\abs{S}}
        }
        -
        1
        \bigg|
        \seminormUnten[\big]{K}{\monomialDown{P+R}{Q+S}}
        +
        \sum_{
          \substack{
            T \in \NN_0^{1+n},\\
            \abs{T} > 0
            \text{ and }\\
            T \le \min\{P, S\}
          }
        }
        \frac{
          (\frac{1}{2\hbar})_{\abs{P+S-T}} T!
        }{
          (\frac{1}{2\hbar})_{\abs{P}}
          (\frac{1}{2\hbar})_{\abs{S}}
        }
        \binom{P}{T}\binom{S}{T}
        \seminormUnten[\big]{K}{\monomialDown{P+R-T}{Q+S-T}}
    \end{align*}
    holds by the formula \eqref{eq:starred} for
    $\starRed[\hbar]$. Using the results of the previous two lemmas,
    we get
    \begin{equation*}
        \bigg|
        \frac{
          (\frac{1}{2\hbar})_{\abs{P+S}}
        }{
          (\frac{1}{2\hbar})_{\abs{P}}
          (\frac{1}{2\hbar})_{\abs{S}}
        }
        -
        1
        \bigg|
        =
        \bigg|
        \frac{
          \prod_{i=0}^{\abs{P+S}-1}(1+2\hbar i)
        }{
          \big(\prod_{j=0}^{\abs{P}-1}(1+2\hbar j)\big)
          \big(\prod_{k=0}^{\abs{S}-1}(1+2\hbar k)\big)
        }
        -
        1
        \bigg|
        \le
        2\hbar \, 2^{\abs{P+S}}
    \end{equation*}
    by Lemma~\ref{lemma:classical1} and
    \begin{align*}
        \frac{
          (\frac{1}{2\hbar})_{\abs{P+S-T}} T!
        }{
          (\frac{1}{2\hbar})_{\abs{P}}
          (\frac{1}{2\hbar})_{\abs{S}}
        }
        &=
        \frac{
          \big(\prod_{i=0}^{\abs{P+S-T}-1}(1+2\hbar i)\big)
          (2\hbar)^{\abs{T}} T!
        }{
          \big(\prod_{j=0}^{\abs{P}-1}(1+2\hbar j)\big)
          \big(\prod_{k=0}^{\abs{S-T}-1}(1+2\hbar k)\big)
          \big(\prod_{\ell=\abs{S-T}}^{\abs{S}-1}(1+2\hbar \ell)\big)
        }
        \\
        &\le
        \binom{P+S-T}{P} 2\hbar\, 2^{\abs{T}}
        \\
        &\le
        2\hbar \, 2^{\abs{P+S}}
    \end{align*}
    by Lemma~\ref{lemma:classical2} with $m = 1$ and by using that
    $\frac{1 + 2\hbar i}{1 + 2\hbar k} \le \frac{1 + i}{1 + k}$ as
    long as $i \ge k$.  Now define
    \begin{equation*}
        \rho := 2+\max_{\mu,\nu\in\{1, \ldots, n\}}
        \seminormUnten{K}{\monomialDown{E_\mu}{E_\nu}},
    \end{equation*}
    then
    $\seminormUnten{K}{\monomialDown{P+R-T}{Q+S-T}} \le
    \rho^{\abs{P+R+Q+S-2T}/2} = \rho^{\abs{P+S-T}}$
    by submultiplicativity, and this yields
    \begin{align*}
        \seminormUnten[\big]{K}{
          \monomialDown{P}{Q}
          \starRed[\hbar]
          \monomialDown{R}{S}
          -
          \monomialDown{P}{Q} \monomialDown{R}{S}
        }
        &\le
        2\hbar \, 2^{\abs{P+S}}
        \sum_{T=0}^{\min\{P, S\}}
        \binom{P}{T}\binom{S}{T}
        \rho^{\abs{P+S-T}}
        \\
        &\le
        2\hbar \, (4\rho)^{\abs{P+S}}
        \sum_{T=0}^{\min\{P, S\}}
        \rho^{-\abs{T}}
        \\
        &\le
        2\hbar\,2^{1+n} \, (4\rho)^{\abs{P+S}}
    \end{align*}
    as $\rho\ge2$.
    From the definition of the fundamental monomials it now follows
    that especially
    \begin{equation*}
        \seminormUnten[\big]{K}{
          \monomialDownRed{P}{Q}
          \starRed[\hbar]
          \monomialDownRed{R}{S}
          -
          \monomialDownRed{P}{Q} \monomialDownRed{R}{S}
        }
        \le
        2\hbar\,2^{1+n}
        \,
        (4\rho)^{\max\{\abs{P},\abs{Q}\}+\max\{\abs{R},\abs{S}\}}
    \end{equation*}
    holds for all $P, Q, R, S \in \NN_0^n$. Thus, for all
    $a=\sum_{P, Q \in \NN_0^n} a_{P, Q} \monomialDownRed{P}{Q}$ and
    $b=\sum_{R, S \in \NN_0^n} b_{R, S} \monomialDownRed{R}{S}$ with
    complex coefficients $a_{P, Q}$ and $b_{R, S}$ we get
    \begin{align*}
        \seminormUnten{K}{a\starRed[\hbar] b - ab}
        &\le
        \sum_{P,Q,R,S\in\NN_0^n}
        \abs{a_{P,Q} b_{R,S}}
        \seminormUnten{K}{
          \monomialDownRed{P}{Q}
          \starRed[\hbar]
          \monomialDownRed{R}{S}
          -
          \monomialDownRed{P}{Q} \monomialDownRed{R}{S}
        }
        \\
        &\le
        2\hbar\,2^{1+n}
        \sum_{P,Q,R,S\in\NN_0^n}
        \abs{a_{P,Q} b_{R,S}}
        (4\rho)^{\max\{\abs{P},\abs{Q}\}+\max\{\abs{R},\abs{S}\}}
        \\
        &\le
        2\hbar\,2^{1+n}
        \sum_{P,Q,R,S\in\NN_0^n}
        \abs{a_{P,Q} b_{R,S}}
        (4\rho)^{\abs{P+Q+R+S}}
        \\
        &=
        2\hbar\,2^{1+n}\,
        \seminormUnten{4\rho}{a} \seminormUnten{4\rho}{b},
    \end{align*}
    which proves that
    $\lim_{\hbar \to 0^+} a \starRed[\hbar] b = a b$.  In order to
    prove the result for the limit of the commutator, we proceed
    analogously and start with commutators of $\monomialDown{P}{Q}$
    and $\monomialDown{R}{S}$ with $P, Q, R, S \in \NN_0^{1+n}$. Let
    $\hbar \in \,\,]0, 1/2]$ and a compact $K\subseteq \DiscDouble$ be
    given, then the estimate
    \begin{align*}
        \seminormUnten[\bigg]{K}{
          &\frac{\I}{\hbar}
          \kom{\monomialDown{P}{Q}}
          {\monomialDown{R}{S}}_{\starRed[\hbar]}
          -
          \poi{\monomialDown{P}{Q}}
          {\monomialDown{R}{S}}
        }
        \le
        \\
        &\le
        \frac{1}{\hbar}
        \sum_{m=0}^n
        \bigg|
        \frac{
          (\frac{1}{2\hbar})_{\abs{P+S}-1}
        }{
          (\frac{1}{2\hbar})_{\abs{P}}
          (\frac{1}{2\hbar})_{\abs{S}}
        }
        -
        2\hbar
        \bigg|
        \big( P_m S_m + Q_m R_m \big)
        \seminormUnten[\big]{K}{\monomialDown{P+R-E_m}{Q+S-E_m}}
        \\
        &\quad+
        \frac{1}{\hbar}
        \sum_{
          \substack{
            T \in \NN_0^{1+n},\\
            \abs{T} > 1
            \textrm{ and }
            T \le \min\{P, S\}
          }
        }
        \frac{
          (\frac{1}{2\hbar})_{\abs{P+S-T}} T!
        }{
          (\frac{1}{2\hbar})_{\abs{P}}
          (\frac{1}{2\hbar})_{\abs{S}}
        }
        \bigg(
        \binom{P}{T}\binom{S}{T}
        +
        \binom{Q}{T}\binom{R}{T}
        \bigg)
        \seminormUnten[\big]{K}{\monomialDown{P+R-T}{Q+S-T}}
    \end{align*}
    holds by the formula \eqref{eq:starred} for $\starRed[\hbar]$ and
    because
    \begin{equation*}
        \poi{\monomialDown{P}{Q}}{\monomialDown{R}{S}}
        =
        \lim_{\hbar \to 0^+} \frac{\I}{\hbar}
        \kom{\monomialDown{P}{Q}}
        {\monomialDown{R}{S}}_{\starRed[\hbar]}
        =
        2\I\big( P_m S_m - Q_m R_m \big)
        \monomialDown{P+R-E_m}{Q+S-E_m}
    \end{equation*}
    by construction of $\starRed[\hbar]$.  For the first term we can
    use
    \begin{align*}
        \bigg|
        \frac{
          (\frac{1}{2\hbar})_{\abs{P+S}-1}
        }{
          (\frac{1}{2\hbar})_{\abs{P}}
          (\frac{1}{2\hbar})_{\abs{S}}
        }
        -
        2\hbar
        \bigg|
        &=
        2\hbar
        \bigg|
        \frac{
          (\frac{1}{2\hbar})_{\abs{P+S}}
        }{
          (\frac{1}{2\hbar})_{\abs{P}}
          (\frac{1}{2\hbar})_{\abs{S}}
        }
        \frac{1}{1+2\hbar(\abs{P+S}-1)}
        -
        1
        \bigg|
        \\
        &\le
        2\hbar
        \bigg|
        \frac{
          (\frac{1}{2\hbar})_{\abs{P+S}}
        }{
          (\frac{1}{2\hbar})_{\abs{P}}
          (\frac{1}{2\hbar})_{\abs{S}}
        }
        -1
        \bigg|
        \frac{1}{1+2\hbar(\abs{P+S}-1)}
        +
        \frac{(2\hbar)^2(\abs{P+S}-1)}{1+2\hbar(\abs{P+S}-1)}
        \\
        &\le
        (2\hbar)^2\,\big(2^{\abs{P+S}}+\abs{P+S}\big)
        \\
        &\le
        2\,(2\hbar)^2\,2^{\abs{P+S}}
    \end{align*}
    by Lemma~\ref{lemma:classical1} as long as $\abs{P+S} \ge 1$,
    which is of course the only case of interest. For the second term,
    an analogous argument as before using Lemma~\ref{lemma:classical2}
    with $m = 2$ yields
    \begin{align*}
        \frac{
          (\frac{1}{2\hbar})_{\abs{P+S-T}} T!
        }{
          (\frac{1}{2\hbar})_{\abs{P}}
          (\frac{1}{2\hbar})_{\abs{S}}
        }
        \le
        2\,(2\hbar)^2 \, 2^{\abs{P+S}}
    \end{align*}
    and by putting all of this together we see that
    \begin{align*}
        \seminormUnten[\bigg]{K}{
          \frac{\I}{\hbar}
          \kom{\monomialDown{P}{Q}}{\monomialDown{R}{S}}_{\starRed[\hbar]}
          &-
          \poi{\monomialDown{P}{Q}}{\monomialDown{R}{S}}
        }
        \le
        \\
        &\le
        8\hbar\,2^{\abs{P+S}}
        \sum_{\substack{T\in\NN_0^{1+n},\\ \abs{T}>0\text{ and } T\le\min\{P,S\}}}
        \bigg(
        \binom{P}{T}\binom{S}{T}
        +
        \binom{Q}{T}\binom{R}{T}
        \bigg)
        \rho^{\abs{P+S-T}}
        \\
        &\le
        16\hbar\,(4\rho)^{\abs{P+S}}
        \sum_{\substack{T\in\NN_0^{1+n},\\ \abs{T}>0\text{ and } T\le\min\{P,S\}}}
        \rho^{-\abs{T}}
        \\
        &\le
        16\hbar\,2^{1+n}\,(4\rho)^{\abs{P+S}}
    \end{align*}
    hence
    $\lim_{\hbar\to 0^+} \frac{\I}{\hbar}\kom{a}{b}_{\starRed[\hbar]}
    = \poi{a}{b}$.
\end{proof}

%
%

\subsection{Gel'fand Transformation and Classically Positive Linear Functionals}
\label{subsec:GelfandTrafoClassical}
From the construction of the commutative $^*$\=/algebra $\AnalyticUnten$
with the pointwise product it is clear that $\AnalyticUnten$ can be
interpreted as a $^*$\=/algebra of functions on
$\Discstd$. Nevertheless, this is to some extend artificial. The most
natural representation of $\AnalyticUnten$ as a $^*$\=/algebra of
functions is on the space of its characters (the unital
$^*$\=/homomorphisms to $\CC$) via Gel'fand transformation. In the
context of topological $^*$\=/algebras, it seems reasonable to focus on
continuous characters. Note, however, that by a theorem of Xia, see
\cite[Thm.~3.6.1]{schmuedgen:1990a}, every positive linear functional on a
unital Fr\'echet-$^*$\=/algebra is actually continuous, so in our case this
is not a restriction.
\begin{proposition} \label{proposition:embedding}
  The map $M\colon \DiscDouble \to \CC^{(1+n)\times(1+n)}$ with components
  $M^{\mu\nu} := \monomialDownHat{E_\mu}{E_\nu}$ is a holomorphic embedding
  that realizes $\DiscDouble$ as the submanifold
  \begin{equation}
    \mathcal{S}
    :=
    \set[\big]{
      A\in\CC^{(1+n)\times(1+n)}
    }{
      h_{\mu\nu}A^{\mu\nu} = -1\text{ and }A^{\mu\nu}A^{\rho\sigma} = A^{\mu\sigma}A^{\rho\nu}
      \text{ for }\mu,\nu,\rho,\sigma \in \{0,\dots,n\}
    }\,.
  \end{equation}
\end{proposition}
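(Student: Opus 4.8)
The plan is to make $M$ explicit on representatives and then to recognise $\mathcal{S}$ as the set of suitably normalised rank-one matrices. Since the reduced monomials satisfy $\monomialDownHat{P}{Q} \circ \prodiscDouble = x^P y^Q\big|_{\LevelsetDouble}$, a representative $(p,q) \in \LevelsetDouble$ of $[p,q] \in \DiscDouble$ gives $M^{\mu\nu}([p,q]) = x^\mu(p,q)\,y^\nu(p,q) = p^\mu q^\nu$, that is, $M([p,q]) = p\,q^\Transpose$ is an outer product. This expression is visibly invariant under $(p,q) \mapsto (zp, q/z)$, so $M$ descends to the quotient $\DiscDouble$, and it is holomorphic because each component $\monomialDownHat{E_\mu}{E_\nu}$ is a holomorphic function on $\DiscDouble$.

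The image is identified next. For $A = p\,q^\Transpose$ one has $h_{\mu\nu}A^{\mu\nu} = h_{\mu\nu}p^\mu q^\nu = \metricDouble(p,q) = -1$ because $(p,q) \in \LevelsetDouble$, and $A^{\mu\nu}A^{\rho\sigma} = p^\mu q^\nu p^\rho q^\sigma = A^{\mu\sigma}A^{\rho\nu}$, whence $M(\DiscDouble) \subseteq \mathcal{S}$. The decisive point is that the quadratic relations $A^{\mu\nu}A^{\rho\sigma} = A^{\mu\sigma}A^{\rho\nu}$ assert exactly the vanishing of all $2 \times 2$ minors of $A$, i.e.\ $\mathrm{rank}\,A \le 1$; combined with $h_{\mu\nu}A^{\mu\nu} = -1 \neq 0$ this forces $\mathrm{rank}\,A = 1$. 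Hence any $A \in \mathcal{S}$ factors as $A = p\,q^\Transpose$ with $p, q \neq 0$, the normalisation yields $\metricDouble(p,q) = h_{\mu\nu}A^{\mu\nu} = -1$, so $(p,q) \in \LevelsetDouble$ and $A = M([p,q])$; thus $M(\DiscDouble) = \mathcal{S}$. Injectivity then follows from the uniqueness of the rank-one factorisation up to $p \mapsto zp$, $q \mapsto q/z$, which is precisely the $\CC_*$-relation defining $\DiscDouble$.

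It remains to verify that $M$ is an embedding, which I expect to be the main technical step. For each pair $a, b \in \{0, \ldots, n\}$ let $\Omega_{ab} := \set{[p,q] \in \DiscDouble}{x^a(p,q) \neq 0 \textrm{ and } y^b(p,q) \neq 0}$ be the domain of the chart obtained by pulling back the affine chart $\{x^a \neq 0,\, y^b \neq 0\}$ of $\CC\PP^n \times \CC\PP^n$ along $\embedInCPnCpn$, with holomorphic coordinates $\xi^\mu = x^\mu/x^a$ ($\mu \neq a$) and $\eta^\nu = y^\nu/y^b$ ($\nu \neq b$). As every $[p,q] \in \DiscDouble$ has $p, q \neq 0$, these domains cover $\DiscDouble$. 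On $\Omega_{ab}$ one has $M^{ab} = p^a q^b \neq 0$ together with the identities $M^{\mu b}/M^{ab} = p^\mu/p^a = \xi^\mu$ and $M^{a\nu}/M^{ab} = q^\nu/q^b = \eta^\nu$, which recover all chart coordinates as holomorphic functions of the matrix entries on the ambient open set $\{A^{ab} \neq 0\} \subseteq \CC^{(1+n)\times(1+n)}$. Since $M(\Omega_{ab}) = \mathcal{S} \cap \{A^{ab} \neq 0\}$, this exhibits $M\big|_{\Omega_{ab}}$ as an injective holomorphic immersion admitting a holomorphic inverse on its image; as the $\Omega_{ab}$ cover $\DiscDouble$ and $M$ is globally injective, $M$ is a holomorphic embedding, and its image $\mathcal{S}$ is thereby realised as an embedded complex submanifold of dimension $2n$. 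The main obstacle is exactly this step: one must cover all of $\DiscDouble$ --- including the points with $x^0 = y^0 = 0$, which lie in none of the standard, $P$- or $Q$-charts --- and verify that the coordinate-recovery maps are holomorphic and patch together, whereas the preceding steps amount to the linear algebra of rank-one matrices.
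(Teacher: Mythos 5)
Your proof is correct and follows the same overall strategy as the paper's: check that $M$ maps holomorphically into $\mathcal{S}$, reconstruct a representative $(p,q)\in\LevelsetDouble$ from a matrix $A\in\mathcal{S}$ to obtain surjectivity and injectivity, and verify the embedding property locally. The differences are in the execution of the last two steps. For surjectivity the paper uses the trace condition $h_{\mu\nu}A^{\mu\nu}=-1$ to find a nonzero \emph{diagonal} entry $A^{\rho\rho}$ and writes the representative explicitly as $p^\mu:=A^{\mu\rho}$, $q^\nu:=A^{\rho\nu}/A^{\rho\rho}$; your rank-one factorisation is the coordinate-free form of exactly this step. For the embedding property the paper argues in two separate pieces: continuity of the inverse via the continuous dependence of that reconstruction on $A$, and the immersion property via a count showing that the $2n+1$ differentials $\D(x^\mu y^\rho)$, $\D(x^\rho y^\nu)$, $\D(x^\rho y^\rho)$ are linearly independent, so that the tangent map of $M\circ\prodiscDouble$ has rank $2n$ on the codimension-one submanifold $\LevelsetDouble$. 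Your ambient holomorphic left inverse $A\mapsto\big(A^{\mu b}/A^{ab},\,A^{a\nu}/A^{ab}\big)$ on $\{A^{ab}\neq 0\}$ delivers both the immersion property and the homeomorphism onto the image in one stroke, which is arguably cleaner. Finally, the obstacle you flag --- covering the points with $x^0=y^0=0$ --- is resolved in the paper not by the full family $\Omega_{ab}$ but by the observation that the diagonal charts $\{x^\rho\neq0,\ y^\rho\neq0\}$ already cover $\DiscDouble$, since the trace condition forces some $A^{\rho\rho}=p^\rho q^\rho$ to be nonzero; using all $\Omega_{ab}$ as you do costs nothing and works equally well.
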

\begin{proof}
    First of all we note that $h_{\mu\nu} M^{\mu\nu} =
    -\monomialDownHat{E_0}{E_0} + \sum_{i=1}^n
    \monomialDownHat{E_i}{E_i} = -1$ by
    \eqref{eq:basisdecompositionUnten}, where $E_i$ is the unit vector
    with $1$ at the $i$-th position and $0$ elsewhere. Note also that
    $M^{\mu\nu} \circ \prodiscDouble = (x^\mu y^\nu) \circ
    \inclusionLevelsetDouble$ by construction of the
    $\monomialDownHat{E_\mu}{E_\nu}$, so
  \begin{equation*}
    \big(M^{\mu\nu}M^{\rho\sigma}\big)([p,q])
    =
    x^\mu(p,q)y^\nu(p,q)x^\rho(p,q)y^\sigma(p,q)
    =
    \big(M^{\mu\sigma}M^{\rho\nu}\big)([p,q])
  \end{equation*}
  for all $[p,q]\in\DiscDouble$. As these polynomials are holomorphic, $M$ is
  a holomorphic mapping to $\mathcal{S}$.

  Given $A\in \mathcal{S}$, then $h_{\mu\nu}A^{\mu\nu} = -1$ implies that there exists a
  $\rho\in \{0,\dots,n\}$ such that $A^{\rho\rho} \neq 0$. Define $p,q\in \CC^{1+n}$
  as $p^\mu := A^{\mu\rho}$ and $q^\nu := A^{\rho\nu} / A^{\rho\rho}$,
  then $(p,q)\in\LevelsetDouble$ and $M([p,q]) = A$. Let $\norm{\argument}$ be any
  norm on $\CC^{(1+n)\times(1+n)}$ and consider $B\in \mathcal{S}$ such that
  $\norm{A - B} \le \delta$ for some $\delta\in [0,\infty[$. If $\delta$ is
  sufficiently small, then also $B^{\rho\rho} \neq 0$ and we can construct
  $r,s\in \CC^{1+n}$ as $r^\mu := B^{\mu\rho}$ and $s^\nu := B^{\rho\nu} / B^{\rho\rho}$,
  and again $(r,s)\in\LevelsetDouble$ with $M([r,s]) = B$.
  Now $\abs{p^\mu - r^\mu} = \abs{A^{\mu\rho}-B^{\mu\rho}} \xrightarrow{\delta\to0} 0$
  and $\abs{q^\nu - s^\nu} = \abs{A^{\rho\nu} / A^{\rho\rho}-B^{\rho\nu}/B^{\rho\rho}} \xrightarrow{\delta\to0} 0$
  show that $M$ is injective and a homeomorphism onto its image $\mathcal{S}$.

  Moreover, the $2n+1$ differentials $\D(x^\mu y^\rho), \D(x^\rho y^\nu)$ and
  $\D(x^\rho y^\rho)$ for $\mu,\nu\in\{0,\dots,\rho\}\backslash\{\rho\}$ are
  linearly independent in the point $(p,q)\in \CC^{1+n}\times\CC^{1+n}$. Restricted
  to the submanifold $\LevelsetDouble$ with (complex) codimension $1$, the
  $x^\mu y^\nu$ are the components of $M\circ \prodiscDouble$, which shows that
  the tangent map of $M\circ \prodiscDouble$, hence of $M$, has (at least)
  rank $2n = \dim_\CC \DiscDouble$.
\end{proof}
Note that this especially implies that the holomorphic functions on $\DiscDouble$
separate points, because the holomorphic functions on $\CC^{(1+n)\times(1+n)}$
do.
\begin{definition}
  \label{definition:DeltaFunctional}%
  For all $[p, q] \in \DiscDouble$ we define the evaluation
  functional $\delta_{[p, q]}\colon \AnalyticUnten \to \CC$,
  $a \mapsto \delta_{[p,q]}(a) := \hat{a}([p, q])$.
\end{definition}
\begin{proposition}
  \label{proposition:evaluationfunctionals}%
  Given $[p, q] \in \DiscDouble$, then $\delta_{[p, q]}$ is a
  continuous unital homomorphism from $\AnalyticUnten$ with the
  pointwise product to $\CC$ and it is a continuous character of
  $\AnalyticUnten$ if and only if $[p, q] \in \Discext$.
\end{proposition}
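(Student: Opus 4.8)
The plan is to check the defining properties of a continuous character one at a time and to isolate the single property that constrains $[p,q]$: continuity and the unital homomorphism property will hold for \emph{every} $[p,q]\in\DiscDouble$, and only compatibility with the $^*$\=/involution will force $[p,q]\in\Discext$.

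First I would settle continuity. By Definition~\ref{definition:topAnalyticUnten} the choice $K:=\{[p,q]\}$ gives $\abs{\delta_{[p,q]}(a)} = \abs{\hat a([p,q])} = \seminormUnten{K}{a}$, so $\delta_{[p,q]}$ is dominated by a single continuous seminorm and is therefore continuous on $\AnalyticUnten$. Next, the unital homomorphism property for the pointwise product is immediate: the assignment $a\mapsto\hat a$ is an isomorphism onto $\Holomorphic(\DiscDouble)$ intertwining the pointwise products and sending the unit to the constant function $1$, while ordinary point evaluation $\Holomorphic(\DiscDouble)\to\CC$ at $[p,q]$ is a unital algebra homomorphism; composing the two yields $\delta_{[p,q]}(ab)=\delta_{[p,q]}(a)\,\delta_{[p,q]}(b)$ and $\delta_{[p,q]}(1)=1$.

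The decisive step is compatibility with the $^*$\=/involution, and this is where the geometry enters. Using the identity $a^* = \cc{\argument}\circ\hat a\circ\involutionAll\circ\DiagUnten$ recorded after Definition~\ref{definition:AnalyticFunctions}, the holomorphic extension of $a^*$ is $\cc{\argument}\circ\hat a\circ\involutionAll$, so that
\begin{equation*}
  \delta_{[p,q]}(a^*)
  =
  \cc{\hat a\big(\involutionAll([p,q])\big)}
  \qquad\text{while}\qquad
  \cc{\delta_{[p,q]}(a)}
  =
  \cc{\hat a([p,q])}.
\end{equation*}
Hence $\delta_{[p,q]}$ is a character precisely when $\hat a\big(\involutionAll([p,q])\big) = \hat a([p,q])$ holds for every $\hat a\in\Holomorphic(\DiscDouble)$.

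It remains to convert this functional identity into a geometric one. The ``if'' direction is trivial, so the only genuine point is the converse. Here I would invoke that the holomorphic functions on $\DiscDouble$ separate points --- exactly the remark following Proposition~\ref{proposition:embedding}, obtained there from the holomorphic embedding $M$ into $\CC^{(1+n)\times(1+n)}$. Thus $\hat a\big(\involutionAll([p,q])\big) = \hat a([p,q])$ for all holomorphic $\hat a$ forces $\involutionAll([p,q]) = [p,q]$, and by the definition of $\Discext$ in \eqref{eq:ExtendedDisc} this is precisely the condition $[p,q]\in\Discext$. This separation property is the only substantive input; everything else is formal bookkeeping about the hat isomorphism and the $^*$\=/structure.
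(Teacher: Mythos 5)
Your proposal is correct and follows essentially the same route as the paper's proof: both reduce the character condition to the identity $\hat a\big(\involutionAll([p,q])\big)=\hat a([p,q])$ for all $a$ via the formula for the $^*$\=/involution, and both conclude $\involutionAll([p,q])=[p,q]$ from the point-separation property supplied by the holomorphic embedding of Proposition~\ref{proposition:embedding}. The only difference is that you spell out the continuity and homomorphism properties (correctly) where the paper calls them immediate.
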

\begin{proof}
  It is immediately clear from its definition that $\delta_{[p, q]}$
  is a continuous unital homomorphism. Moreover,
  $\delta_{[p, q]}(a^*) = \cc{(\hat{a}\circ \involutionAll)([p, q])}$
  and $\cc{\delta_{[p, q]}(a)} = \cc{\hat{a}([p, q])}$, so
  $\delta_{[p, q]}$ is a character if and only if
  $\big(\hat{a} \circ \involutionAll\big)([p, q]) = \hat{a}([p, q])$
  holds for all $a \in \AnalyticUnten$. As the holomorphic functions
  on $\DiscDouble$ separate points by
  Proposition~\ref{proposition:embedding}, this is equivalent to
  $\involutionAll([p, q]) = [p, q]$, i.e. to $[p, q] \in \Discext$.
\end{proof}
\begin{theorem}[Gel'fand transformation]
  \label{theorem:Gelfand}%
  Let $\UnitHom\big(\AnalyticUnten\big)$ be the set of continuous
  unital homomorphisms from $\AnalyticUnten$ to $\CC$ with the
  weak-$^*$-topology.  Then
  $\delta\colon \DiscDouble \to \UnitHom\big(\AnalyticUnten\big)$,
  $[p, q] \mapsto \delta_{[p, q]}$ is a well-defined homeomorphism.
  Moreover, let
  $\Characters\big(\AnalyticUnten\big) \subseteq
  \UnitHom\big(\AnalyticUnten\big)$
  be the set of characters of $\AnalyticUnten$ again with the
  weak-$^*$-topology, then $\delta$ restricts to a homeomorphism
  from $\Discext$ to $\Characters\big(\AnalyticUnten\big)$.
\end{theorem}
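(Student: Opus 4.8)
The plan is to rest the argument on the two facts already in hand. Proposition~\ref{proposition:evaluationfunctionals} shows that each $\delta_{[p,q]}$ is a continuous unital homomorphism (and a character exactly when $[p,q]\in\Discext$), so $\delta$ does map $\DiscDouble$ into $\UnitHom(\AnalyticUnten)$ and $\Discext$ into $\Characters(\AnalyticUnten)$, and it is well defined because $\hat a$ is a function on $\DiscDouble$. Continuity of $\delta$ for the weak-$*$ topology is immediate: for fixed $a\in\AnalyticUnten$ the map $[p,q]\mapsto\delta_{[p,q]}(a)=\hat a([p,q])$ is the holomorphic, hence continuous, function $\hat a$ on $\DiscDouble$. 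Injectivity is precisely the statement that the holomorphic functions on $\DiscDouble$ separate points, which is the remark following Proposition~\ref{proposition:embedding}. Thus the real content is surjectivity onto $\UnitHom(\AnalyticUnten)$, continuity of $\delta^{-1}$, and the matching of the two character subsets.

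For surjectivity I would start from a continuous unital homomorphism $\chi\colon\AnalyticUnten\to\CC$ and set $A^{\mu\nu}:=\chi(\monomialDown{E_\mu}{E_\nu})$ for $\mu,\nu\in\{0,\dots,n\}$, aiming to show $A\in\mathcal{S}$. Since $h_{\mu\nu}\monomialDown{E_\mu}{E_\nu}=-\Unit$ in $\AnalyticUnten$ by \eqref{eq:basisdecompositionUnten} and $\chi$ is unital, we get $h_{\mu\nu}A^{\mu\nu}=\chi(-\Unit)=-1$; and the pointwise identity $\monomialDown{E_\mu}{E_\nu}\,\monomialDown{E_\rho}{E_\sigma}=\monomialDown{E_\mu}{E_\sigma}\,\monomialDown{E_\rho}{E_\nu}$ (the relation $M^{\mu\nu}M^{\rho\sigma}=M^{\mu\sigma}M^{\rho\nu}$ from the proof of Proposition~\ref{proposition:embedding}) together with multiplicativity of $\chi$ yields $A^{\mu\nu}A^{\rho\sigma}=A^{\mu\sigma}A^{\rho\nu}$. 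Hence $A\in\mathcal{S}$, and by Proposition~\ref{proposition:embedding} there is a unique $[p,q]\in\DiscDouble$ with $M([p,q])=A$, that is $\chi(\monomialDown{E_\mu}{E_\nu})=\monomialDownHat{E_\mu}{E_\nu}([p,q])=\delta_{[p,q]}(\monomialDown{E_\mu}{E_\nu})$ for all $\mu,\nu$.

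It remains to promote this agreement on the bidegree-one generators to the identity $\chi=\delta_{[p,q]}$, and I expect this to be the crux. The key algebraic observation is that every fundamental monomial is a pointwise product of the $\monomialDown{E_\mu}{E_\nu}$: for $\abs{P}\ge\abs{Q}$ one writes $\monomialDownRed{P}{Q}$ as a product of $\abs{P}$ generators, taking $Q_j$ factors of type $\monomialDown{E_i}{E_j}$ with $i,j\ge 1$ and $\abs{P}-\abs{Q}$ factors of type $\monomialDown{E_i}{E_0}$, arranged so that the upper indices realize the multiset $P$ and the lower indices the multiset $Q$. This is checked most cleanly in the $P$-chart using \eqref{eq:chartPdoublerep}, where each such factor becomes a plain monomial $x^iy^j$ or $x^i$ and the product equals $x^Py^Q=\monomialDownRedHat{P}{Q}\circ\chartPDoubleInv$; the case $\abs{P}\le\abs{Q}$ is symmetric via the $Q$-chart and \eqref{eq:chartQdoublerep}. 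Since $\chi$ and $\delta_{[p,q]}$ are both multiplicative and agree on the generators, they agree on every $\monomialDownRed{P}{Q}$, and as these form an absolute Schauder basis of $\AnalyticUnten$ by Theorem~\ref{theorem:Completion} and both functionals are continuous, they coincide on all of $\AnalyticUnten$. Thus $\chi=\delta_{[p,q]}$ and $\delta$ is onto.

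Finally, for the homeomorphism property I would exploit $M$ once more. A basic weak-$*$ neighbourhood can be described through finitely many of the continuous functionals $\chi\mapsto\chi(\monomialDown{E_\mu}{E_\nu})$, i.e.\ through the coordinates $M^{\mu\nu}$. Given an open $V\subseteq\DiscDouble$, the fact that $M$ is a homeomorphism onto $\mathcal{S}$ gives $M(V)=\mathcal{S}\cap O$ for some open $O\subseteq\CC^{(1+n)\times(1+n)}$, and then $W:=\set{\chi\in\UnitHom(\AnalyticUnten)}{(\chi(\monomialDown{E_\mu}{E_\nu}))_{\mu\nu}\in O}$ is weak-$*$ open with $\delta^{-1}(W)=V$, hence $\delta(V)=W$. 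This shows $\delta$ is open, so the continuous bijection $\delta$ is a homeomorphism. The restriction to $\Discext$ and $\Characters(\AnalyticUnten)$ then needs nothing new: Proposition~\ref{proposition:evaluationfunctionals} identifies the image of $\Discext$ under the bijection $\delta$ with exactly the characters, and the restriction of a homeomorphism to a subspace is again a homeomorphism onto its image.
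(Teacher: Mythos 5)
Your proposal is correct and follows essentially the same route as the paper: well-definedness and injectivity from Proposition~\ref{proposition:evaluationfunctionals} and the point-separation remark, surjectivity by packaging $\chi(\monomialDown{E_\mu}{E_\nu})$ into a matrix in $\mathcal{S}$ and invoking Proposition~\ref{proposition:embedding}, then density of $\PolynomeUnten$ and continuity to conclude $\chi=\delta_{[p,q]}$, and the homeomorphism property via the embedding $M$. The only difference is cosmetic: where the paper simply asserts that the $\monomialDown{E_\mu}{E_\nu}$ generate $\PolynomeUnten$ as a unital $^*$\=/algebra, you spell out the factorization of each $\monomialDownRed{P}{Q}$ as a pointwise product of generators in the $P$- or $Q$-chart, which is a correct and slightly more explicit justification of that step.
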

\begin{proof}
  Proposition~\ref{proposition:evaluationfunctionals} already shows
  that $\delta$ maps to the continuous unital homomorphisms, and $\delta$ is
  injective because the holomorphic functions on $\DiscDouble$ separate points
  due to Proposition~\ref{proposition:embedding}.

  Now let a continuous unital homomorphism
  $\omega\colon \AnalyticUnten \to \CC$ be given. Construct the
  matrix $A^{\mu\nu} := \omega(\monomialDownHat{E_\mu}{E_\nu})$. Then
  $h_{\mu\nu}A^{\mu\nu} = -\omega\big(\monomialDownHat{E_0}{E_0} -
  \sum_{i=1}^n \monomialDownHat{E_i}{E_i}\big) = -
  \omega(\monomialDownHat{0}{0}) = -1$
  by \eqref{eq:basisdecompositionUnten} and
  $A^{\mu\nu}A^{\rho\sigma}
  = \omega\big( \monomialDown{E_\mu}{E_\nu}\monomialDown{E_\rho}{E_\sigma} \big)
  = \omega\big( \monomialDown{E_\mu}{E_\sigma}\monomialDown{E_\rho}{E_\nu} \big)
  =A^{\mu\sigma}A^{\rho\nu}$, so
  $A$ is in the image of the holomorphic embedding $M$ from
  Proposition~\ref{proposition:embedding} and there exists a unique
  $[p,q] \in \DiscDouble$ with $\omega(\monomialDownHat{E_\mu}{E_\nu}) = A^{\mu\nu} = M^{\mu\nu}([p,q])
  = \delta_{[p,q]} \monomialDownHat{E_\mu}{E_\nu}$ for all $\mu,\nu\in\{0,\dots,n\}$.
  As these monomials generate $\PolynomeUnten$ as a unital $^*$\=/algebra,
  $\delta_{[p, q]}$ and $\omega$ coincide on $\PolynomeUnten$, and as
  $\PolynomeUnten$ is dense in $\AnalyticUnten$ we can conclude that
  $\delta_{[p,q]} = \omega$.

  By now we have seen that $\delta$ is a bijection, and it is even a
  homeomorphism, because the embedding $M$ of $\DiscDouble$ in
  $\CC^{(1+n)\times(1+n)}$ shows that $\DiscDouble$ carries the weak topology
  of its holomorphic functions (because $\CC^{(1+n)\times(1+n)}$ does), which
  under $\delta$ corresponds to the weak-$^*$-topology.

  The analogous statements about the space of characters of
  $\AnalyticUnten$ are now an immediate consequence of the above and
  of Proposition~\ref{proposition:evaluationfunctionals}.
\end{proof}

Note that this result is to some extend unfortunate, as it shows that
the interpretation of $\AnalyticUnten$ as a $^*$\=/algebra of functions
on $\Discstd$ is not really natural. At the center of the problem
lies the fact that the function
$\frac{1}{1 - w \cdot \cc{w}} =
\monomialDown{E_0}{E_0}\in\PolynomeUnten$
is not algebraically positive even though
$\monomialDown{E_0}{E_0} = \ReductionMap{0}\big((\monomial{E_0}{0})^*
\monomial{E_0}{0}\big)$.
As the algebra $\PolynomeUnten$ arises from $\PolynomeOben$ by a
reduction procedure (in the classical case as well as in the quantum
case), one might consider only those positive linear functionals on
$\PolynomeUnten$ to be ``relevant'' that come from a
$\group{U}(1)$-invariant functional on $\PolynomeOben$. This would
especially eliminate all characters
$\phi \in \Characters\big(\AnalyticUnten\big)$ for which
$\phi(\monomialDown{E_0}{E_0}) < 0$ and leave only the evaluation
functionals at points in $\Discstd$.

\begin{corollary}
    \label{corollary:RadonStuff}%
    Let $\phi\colon \AnalyticUnten \to \CC$ be a continuous positive
    linear functional (with respect to the pointwise product), then
    there exists a compact $K\subseteq \Discext$ and a Radon measure
    $\mu$ on $K$ such that
    \begin{align*}
        \phi(a) = \int_K \hat{a} \D \mu
    \end{align*}
    holds for all $a \in \AnalyticUnten$.
\end{corollary}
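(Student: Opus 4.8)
The assertion is a Riesz--Gel'fand type representation: a continuous positive functional on the commutative Fr\'echet $^*$\=/algebra $\AnalyticUnten$ ought to be integration over its character space, which by Theorem~\ref{theorem:Gelfand} is exactly $\Discext$. The plan is therefore in two stages. First, from continuity alone I produce a representing complex Radon measure on a compact subset of $\DiscDouble$. Second, I use positivity to force this measure onto the character locus $\Discext$ and to render it positive.

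For the first stage, note that $\phi$ is continuous, either by assumption or automatically by Xia's theorem \cite[Thm.~3.6.1]{schmuedgen:1990a}. Hence there are a compact $K_0 \subseteq \DiscDouble$ and a $C > 0$ with $\abs{\phi(a)} \le C\,\seminormUnten{K_0}{a}$ for all $a \in \AnalyticUnten$, and after replacing $K_0$ by $K_0 \cup \involutionAll(K_0)$ we may assume $K_0$ is $\involutionAll$\=/stable. The restriction $\AnalyticUnten \ni a \mapsto \hat{a}\at{K_0} \in \Stetig(K_0)$ is continuous for the supremum norm, and $\phi$ factors through it with norm at most $C$. Thus Hahn--Banach extends $\phi$ to a continuous functional on $\Stetig(K_0)$, and the Riesz--Markov theorem provides a complex Radon measure $\nu$ on $K_0$ with $\phi(a) = \int_{K_0} \hat{a}\, \D\nu$ for all $a \in \AnalyticUnten$.

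The second stage is the heart of the matter. Using $\widehat{a^*} = \cc{\argument} \circ \hat{a} \circ \involutionAll$, positivity of $\phi$ becomes the statement that $\int_{K_0} \cc{\hat{a} \circ \involutionAll}\, \hat{a}\, \D\nu \ge 0$ for every $a \in \AnalyticUnten$; since moreover $\phi(a^*) = \cc{\phi(a)}$, both $\nu$ and $\involutionAll_*\cc{\nu}$ represent $\phi$, so I may symmetrise and assume $\nu = \involutionAll_*\cc{\nu}$. The decisive observation is that a homomorphism $\delta_{[p,q]}$ with $[p,q] \notin \Discext$ cannot be positive: as $\involutionAll([p,q]) \neq [p,q]$ and the holomorphic functions on $\DiscDouble$ separate points (Proposition~\ref{proposition:embedding}), one can choose $a$ with $\hat{a}([p,q]) = 1$ and $\hat{a}(\involutionAll([p,q])) = \I$, whence $\delta_{[p,q]}(a^* a) = -\I \notin [0, \infty[$, in line with Proposition~\ref{proposition:evaluationfunctionals}. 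Localising this obstruction near any prospective point of the support forces $\nu$ to be carried by the fixed-point set $\set{[p,q] \in \DiscDouble}{\involutionAll([p,q]) = [p,q]} = \Discext$. On $\Discext$ one has $\involutionAll = \mathrm{id}$, so the quadratic form collapses to $\int |\hat{a}|^2\, \D\nu \ge 0$, which identifies $\nu$ with a genuine positive Radon measure $\mu$; taking $K := \operatorname{supp}\mu \subseteq K_0 \cap \Discext$, a compact subset of $\Discext$, finishes the proof.

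The hard part is precisely this localisation step: converting the algebraic positivity of $\phi$ into honest positivity and $\Discext$\=/support of the representing measure, i.e.\ excluding mass at the non-character homomorphisms of $\DiscDouble$. The separation of points by holomorphic functions from Proposition~\ref{proposition:embedding}, together with the $\involutionAll$\=/twisted positive\=/definiteness of $\nu$, is what should drive this, and the $\involutionAll$\=/symmetry arranged above keeps the argument compatible with the reality constraint defining $\Discext$.
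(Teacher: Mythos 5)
Your first stage is sound, but the second stage has a genuine gap at exactly the point you yourself flag as ``the heart of the matter'': the localisation step is asserted, not proved, and as stated it cannot be carried out. The representing complex measure $\nu$ obtained from Hahn--Banach and Riesz--Markov is highly non-unique, because the image of $\AnalyticUnten$ in $\Stetig(K_0)$ under $a \mapsto \hat{a}|_{K_0}$ is an algebra of restrictions of holomorphic functions and hence not self-adjoint, so not dense. Positivity is a property of the functional $\phi$, not of any particular representing measure, and it does not localise to the support of $\nu$. Concretely, the evaluation functional $\delta_{[r]}$ at a point $[r] \in \Discstd$ is positive for the pointwise product (Proposition~\ref{proposition:evfuncpos} with $\hbar = 0$), yet the Cauchy integral formula in the $P$-chart represents it by a complex measure carried by a distinguished boundary of a polydisc, i.e. by a torus in $\DiscDouble$ that is not contained in $\Discext$. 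So the presence of non-character points $[p,q]$ in $\operatorname{supp}\nu$ produces no contradiction with positivity, and the pointwise non-positivity of $\delta_{[p,q]}$ for $[p,q] \notin \Discext$ does not ``force $\nu$ to be carried by the fixed-point set.'' The symmetrisation $\nu = \involutionAll_*\cc{\nu}$ does not help either: it is again only a normalisation of one arbitrary representative.

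The paper closes this gap by a different mechanism. After normalising $\phi(\Unit) = 1$, it shows that the GNS operator seminorm $\seminormUnten{\phi,\infty}{a} = \sup_{b,\,\phi(b^*b)=1}\sqrt{\phi(b^*a^*ab)}$ is dominated by $\seminormUnten{K'}{a}$, using the iteration $\sqrt{\phi(b^*a^*ab)} \le \sqrt[4]{\phi(b^*(a^*a)^2b)} \le \cdots$ together with submultiplicativity of $\seminormUnten{K'}{\argument}$ and Cauchy--Schwarz. This produces a continuous $C^*$-seminorm, hence a commutative $C^*$-algebra completion $\algebra{B}$, and the Gel'fand theory of $\algebra{B}$ (not of $\AnalyticUnten$ itself) yields a compact set of characters which pulls back, via Theorem~\ref{theorem:Gelfand}, to a compact $K \subseteq \Discext$ with $\abs{\phi(a)} \le \seminormUnten{K}{a}$. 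Only after this bound is in hand does Riesz--Markov enter, now on $\Stetig(K)$ with $K \subseteq \Discext$, where the restrictions $\hat{a}|_K$ do form a self-adjoint algebra (since $\involutionAll = \mathrm{id}$ on $\Discext$) and Stone--Weierstra{\ss} applies, so the resulting measure is genuinely positive and genuinely supported in $\Discext$. To repair your argument you would need some substitute for this step, i.e. a proof that $\phi$ is dominated by a sup-norm over a compact subset of $\Discext$, rather than a manipulation of one arbitrary representing measure on $\DiscDouble$.
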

\begin{proof}
    It is sufficient to treat the case that $\phi$ is normalized to
    $\phi(\Unit) = 1$, as $\phi(\Unit) = 0$ implies $\phi = 0$ by the
    Cauchy-Schwarz inequality, and to show that there exists a compact
    $K\subseteq \Discext$ such that $\abs{\phi(a)} \le \seminormUnten{K}{a}$
    holds for all $a\in \AnalyticUnten$, in which case $\phi$ extends
    continuously to $\Stetig(K)$, the completion of $\AnalyticUnten$ under
    $\seminormUnten{K}{\argument}$ by the Stone-Weierstra{\ss} theorem, and can be
    represented by integration over a Radon measure $\mu$ on $K$ by the
    Riesz-Markov theorem.

    As $\phi$ is
    continuous, there exists a compact $K' \subseteq \DiscDouble$,
    stable under $\involutionAll$, and a constant $C \in \RR$ such
    that $\abs{\phi(a)} \le C \seminormUnten{K'}{a}$ holds for all
    $a\in\AnalyticUnten$.  From the submultiplicativity of
    $\seminormUnten{K'}{\argument}$ and the Cauchy-Schwarz
    inequality it then follows that
    $\seminormUnten{\phi, \infty}{a} := \sup_{b \in \AnalyticUnten,
      \phi(b^*b) = 1} \sqrt{\phi(b^*a^*ab)}$
    is a continuous seminorm on $\AnalyticUnten$, because
    \begin{align*}
        \sqrt{\phi(b^*a^*ab)}
        &\le
        \sqrt[4]{\phi(b^*(a^*a)^2b)}
        \\
        &\le
        \dots
        \\
        &\le
        \sqrt[2^{n}]{\phi(b^*(a^*a)^{2^{n-1}}b)}
        \\
        &\le
        \sqrt[2^{n}]{C \seminormUnten{K'}{b^*} \seminormUnten{K'}{b} }
        \sqrt{\seminormUnten{K'}{a^*}\seminormUnten{K'}{a}}
    \end{align*}
    holds for all $n \in \NN$ and all $b \in \AnalyticUnten$ with
    $\phi(b^*b) = 1$, hence
    $\sqrt{\phi(b^*a^*ab)} \le \seminormUnten{K'}{a}$ and
    $\seminormUnten{\phi,\infty}{\argument} \le
    \seminormUnten{K'}{\argument}$.
    One can also check that $\seminormUnten{\phi, \infty}{\argument}$
    is a $C^*$-seminorm (it is the operator norm in the usual GNS representation
    associated to $\phi$). By dividing out the zeros of
    $\seminormUnten{\phi, \infty}{\argument}$ and completing with respect
    to $\seminormUnten{\phi, \infty}{\argument}$, we construct a commutative
    $C^*$-algebra $\algebra{B}$ and the continuous
    $\iota\colon \AnalyticUnten \to \algebra{B}$ as the composition of the
    projection on the quotient and the inclusion in the completion. Let
    $\Spec^*(\algebra{B})$ be the (compact) set of characters of $\algebra{B}$, then the
    $C^*$-norm $\seminormUnten{\phi,\infty}{\argument}$ on $\algebra{B}$ is
    the uniform norm on the Gel'fand transformation of $\algebra{B}$, hence
    especially $\seminormUnten{\phi,\infty}{a} =
    \sup_{\psi\in\Spec^*(\algebra{B})} \abs{\psi(\iota(a))}$ for all $a\in\AnalyticUnten$.
    The pullback $\iota^*\colon \Spec^*(\algebra{B})\to\Spec^*(\AnalyticUnten)$ is
    weak-$^*$-continuous by construction of $\iota$ and by the previous
    Theorem~\ref{theorem:Gelfand}, the compact
    $\iota^*\big(\Spec^*(\algebra{B})\big) \subseteq \Spec^*(\AnalyticUnten)$
    is the image of a compact $K\subseteq \Discext$ under $\delta$, so
    \begin{equation*}
      \abs{\phi(a)}
      \le
      \phi(a^*a)
      \le
      \seminormUnten{\phi,\infty}{a}
      =
      \sup_{\psi\in\Spec^*(\algebra{B})} \abs[\big]{\psi\big(\iota(a)\big)}
      =
      \sup_{[p,q]\in K} \abs{\delta_{[p,q]}a}
      =
      \sup_{[p,q]\in K} \abs{\hat{a}([p,q])}
      =
      \seminormUnten{K}{a}
    \end{equation*}
    for all $a\in \AnalyticUnten$.
\end{proof}

%
%

\subsection{Positive Linear Functionals and Representations of the Deformed
Algebra}
\label{subsec:PositiveFunctionalsDeformed}
From the point of view of physics, the most important problem after
having constructed a $^*$\=/algebra of observables is, whether there
exist many positive linear functionals and thus faithful
representations. Again, we focus on continuous positive linear
functionals as we are dealing with a locally convex $^*$\=/algebras,
which is no restriction as mentioned before, because every positive
linear functional on a Fr\'echet-$^*$\=/algebra is continuous.
\begin{proposition}
    \label{proposition:evfuncpos}%
    For every $[r]\in \Discstd$, the evaluation functional
    $\delta_{[r]} := \delta_{\DiagUnten([r])} \colon \AnalyticUnten
    \to \CC$,
    $a \mapsto \delta_{[r]}(a) = a([r])$ is continuous and positive
    with respect to every product $\starRed[\hbar]$ for all
    $\hbar \ge 0$, i.e. $\delta_{[r]}(a^* \starRed[\hbar] a) \ge 0$
    holds for all $a \in \AnalyticUnten$.
\end{proposition}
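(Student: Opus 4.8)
The plan is to first dispense with continuity and then reduce the positivity claim to a single, explicitly computable functional upstairs. Continuity of $\delta_{[r]} = \delta_{\DiagUnten([r])}$ is already contained in Proposition~\ref{proposition:evaluationfunctionals}, since $\DiagUnten([r]) \in \Discext \subseteq \DiscDouble$. The case $\hbar = 0$ is trivial, because $\starRed[0]$ is the pointwise product by Theorem~\ref{theorem:ClassicalLimit}, so $\delta_{[r]}(a^* \starRed[0] a) = \abs{a([r])}^2 \ge 0$. For $\hbar > 0$ I would argue by density: as $\PolynomeUnten$ is dense in $\AnalyticUnten$ by Theorem~\ref{theorem:Completion} and $\starRed[\hbar]$, the involution and $\delta_{[r]}$ are all continuous, it suffices to prove $\delta_{[r]}(a^* \starRed[\hbar] a) \ge 0$ for $a \in \PolynomeUnten$. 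Finally, since $\starRed[\hbar]$, the involution and the $\group{U}(1,n)$\=/action are all equivariant, since $\group{SU}(1, n)$ acts transitively on $\Discstd$, and since $\PolynomeUnten$ is $\group{U}(1,n)$\=/stable (it is the image of the stable space $\PolynomeOben[\group{U}(1)]$ under the equivariant map $\ReductionMap{0}$), positivity at one point propagates along the orbit: it is enough to treat the base point $[r_0]$ with $r_0 = (1, 0, \ldots, 0)$, i.e.\ $w = 0$.

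Next I would transport the problem to $\CC^{1+n}$. For real $\hbar \in \HbarDef$ the deformed reduction map $\ReductionMap{\hbar}$ is a $^*$\=/homomorphism from $(\PolynomeOben[\group{U}(1)], \starWick[\hbar])$ onto $(\PolynomeUnten, \starRed[\hbar])$: multiplicativity is Definition~\ref{definition:starred}, and $^*$\=/compatibility follows from $\monomial{P}{Q}^* = \monomial{Q}{P}$ together with the reality of the factor $(2\hbar)^{\abs{P}}(1/(2\hbar))_{\abs{P}}$ when $\abs{P} = \abs{Q}$. Choosing a preimage $b \in \PolynomeOben[\group{U}(1)]$ of $a$ (for instance $b = \ReductionMapInv{\hbar}(a)$) therefore gives $a^* \starRed[\hbar] a = \ReductionMap{\hbar}(b^* \starWick[\hbar] b)$, so that $\delta_{[r_0]}(a^* \starRed[\hbar] a) = \omega(b^* \starWick[\hbar] b)$ with the functional $\omega := \delta_{[r_0]} \circ \ReductionMap{\hbar}$ on $\PolynomeOben[\group{U}(1)]$. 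The whole claim thus reduces to showing that $\omega$ is $\starWick[\hbar]$\=/positive for $\hbar > 0$.

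Here the computation becomes explicit. Evaluating at $r_0$ annihilates every monomial containing a factor $z^i$ or $\cc{z}^i$ with $i \ge 1$, so $\omega(\monomial{P}{Q}) = \prod_{j=0}^{\abs{P}-1}(1 + 2\hbar j)$ if $P = Q = \abs{P}\,E_0$ and $\omega(\monomial{P}{Q}) = 0$ otherwise, where $E_0 = (1, 0, \ldots, 0)$. Feeding the product formula \eqref{eq:starwick} into $\omega(\monomial{P}{Q}^* \starWick[\hbar] \monomial{R}{S})$ and imposing that the surviving monomial be a pure power of $\abs{z^0}^2$, one finds that only the coefficients of the monomials $(z^0)^{\abs{Q}}\cc{z}^Q$ of $b$ enter, and that the resulting Gram form is block\=/diagonal in the anti-holomorphic multiindex $Q' = (Q_1, \ldots, Q_n)$. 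Within each block the entries are a finite alternating sum of Pochhammer symbols which, after writing $\lambda = 1/(2\hbar)$ for the argument, is exactly a terminating ${}_2F_1$ at $1$; by the Chu--Vandermonde identity this sum collapses to a product, exhibiting each block as a rank\=/one form $v_q v_s$ with real $v_q$. Consequently $\omega(b^* \starWick[\hbar] b)$ is a sum, over the blocks, of manifestly nonnegative terms $(\text{positive weight}) \cdot \abs{\sum_q b_{\ldots}\, v_q}^2$, the positivity of the weights being exactly where $\hbar \ge 0$ enters (the factors $\prod_{j=0}^{k-1}(1 + 2\hbar j)$ are then positive). I expect this rank\=/one collapse to be the only real obstacle; everything preceding it is formal, and once the Chu--Vandermonde evaluation is in hand the positivity of $\omega$, and with it the proposition, is immediate.
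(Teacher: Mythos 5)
Your argument is correct, but it takes a genuinely different route from the paper. In the paper, continuity and the reduction by density to the $^*$\=/subalgebra $\PolynomeUnten$ are handled exactly as you do; the actual positivity of $\delta_{[r]}$ on $\PolynomeUnten$, however, is not proved there but quoted from Lemma~5.21 of \cite{beiser.waldmann:2014a}. What you supply instead is a self-contained proof: the equivariance/transitivity reduction to the base point $w = 0$, the transport through the $^*$\=/homomorphism $\ReductionMap{\hbar}$ to the functional $\omega = \delta_{[r_0]} \circ \ReductionMap{\hbar}$ on $\big(\PolynomeOben[\group{U}(1)], \starWick[\hbar]\big)$, and the explicit Gram\-/matrix computation. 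I have checked that the latter works as you predict: writing $b = \sum b_{R,S}\, z^R \cc{z}^S$, the requirement that $\omega$ not annihilate a term of $\monomial{S}{R} \starWick[\hbar] \monomial{R'}{S'}$ forces $R = \abs{S} E_0$, $R' = \abs{S'} E_0$ and $(S_1, \ldots, S_n) = (S'_1, \ldots, S'_n) =: \sigma$, so only the coefficients $b_{\abs{S}E_0, S}$ enter and the form is block\-/diagonal in $\sigma$; within a block, Chu--Vandermonde gives $\sum_{t} (-1)^t t! \binom{p}{t}\binom{s}{t} (\lambda)_{p+s+\abs{\sigma}-t} = (\lambda)_{\abs{\sigma}} (\lambda + \abs{\sigma})_p (\lambda + \abs{\sigma})_s$ with $\lambda = 1/(2\hbar)$, which after restoring the powers of $2\hbar$ is a positive weight times the rank\-/one matrix $v_p v_s$ with $v_p = \prod_{j=0}^{p-1}\big(1 + 2\hbar(\abs{\sigma} + j)\big) > 0$. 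So the proposal is complete; the only cosmetic caveat is that you present the key computation as an expectation rather than carrying it out, and it should be written down in full. The trade\-/off between the two approaches: the paper's citation is shorter, while your version is self-contained, makes visible exactly where $\hbar \ge 0$ enters (positivity of the products $\prod_j (1 + 2\hbar j)$), and exploits the $\group{SU}(1,n)$\-/symmetry to reduce the verification to a single point, which is what renders the explicit computation tractable.
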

\begin{proof}
    Like in Proposition~\ref{proposition:evaluationfunctionals},
    continuity of all evaluation functionals $\delta_{[r]}$ with
    $[r]\in\Discstd$ is clear. It is thus also sufficient to prove
    positivity of $\delta_{[r]}$ only on the dense unital $^*$-subalgebra
    $\PolynomeUnten$ of $\AnalyticUnten$, which has already been
    done in \cite[Lemma~5.21]{beiser.waldmann:2014a}.
\end{proof}
\begin{corollary}
    \label{corollary:StronglyPositiveDeformation}%
    Let $\hbar \ge 0$. Then every positive linear functional on
    $\Cinfty(\Discstd)$ restricts to a positive linear functional on
    $\AnalyticUnten$ with respect to $\starRed[\hbar]$.
\end{corollary}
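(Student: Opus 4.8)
The plan is to deduce the statement directly from the pointwise positivity of star-squares that is already encoded in Proposition~\ref{proposition:evfuncpos}, together with the elementary fact that a positive functional on the commutative $^*$\=/algebra $\Cinfty(\Discstd)$ is monotone on nonnegative functions. So I fix $\hbar \ge 0$ and let $\phi\colon \Cinfty(\Discstd) \to \CC$ be positive with respect to the pointwise product, i.e. $\phi(f^* f) \ge 0$ for all $f \in \Cinfty(\Discstd)$. Since $\AnalyticUnten$ is a subalgebra of $\Cinfty(\Discstd)$, I only have to verify that $\phi(a^* \starRed[\hbar] a) \ge 0$ holds for every $a \in \AnalyticUnten$.

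First I would note that the star-square $g := a^* \starRed[\hbar] a$ is again an element of $\AnalyticUnten \subseteq \Cinfty(\Discstd)$, and that it is a \emph{pointwise nonnegative} function on $\Discstd$. Indeed, for every point $[r] \in \Discstd$ the value of $g$ at $[r]$ is exactly $\delta_{[r]}(a^* \starRed[\hbar] a)$, which is nonnegative by Proposition~\ref{proposition:evfuncpos}, as $\delta_{[r]}$ is positive with respect to $\starRed[\hbar]$ for all $\hbar \ge 0$. Thus $g \ge 0$ as a smooth function.

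It then remains to show that $\phi(g) \ge 0$ for any nonnegative $g \in \Cinfty(\Discstd)$. Here I would use the standard regularization trick: for $\varepsilon > 0$ the function $g + \varepsilon$ is strictly positive, so $\sqrt{g + \varepsilon}$ is smooth and real-valued with $g + \varepsilon = (\sqrt{g+\varepsilon})^* \sqrt{g+\varepsilon}$; positivity of $\phi$ then gives $\phi(g) + \varepsilon\,\phi(\Unit) = \phi(g+\varepsilon) \ge 0$, and since $\phi(\Unit) = \phi(\Unit^* \Unit) \ge 0$, letting $\varepsilon \to 0^+$ yields $\phi(g) \ge 0$. Applying this to $g = a^* \starRed[\hbar] a$ finishes the argument.

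The argument is short precisely because the real work has already been done: the only nontrivial input is the positivity of the evaluation functionals $\delta_{[r]}$ for $\starRed[\hbar]$ from Proposition~\ref{proposition:evfuncpos} (which itself rests on \cite[Lemma~5.21]{beiser.waldmann:2014a}). Once that pointwise positivity is in hand, no continuity hypothesis on $\phi$ is needed and the monotonicity step is purely algebraic. The only point to keep an eye on is the restriction $\hbar \ge 0$, which is exactly the range in which Proposition~\ref{proposition:evfuncpos} guarantees positivity of the $\delta_{[r]}$; for $\hbar \in \HbarDef$ with $\hbar < 0$ the same reasoning would break down together with the pointwise positivity.
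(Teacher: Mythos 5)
Your proof is correct, but it takes a genuinely different route from the paper's. The paper first invokes the classical fact that a positive linear functional on $\Cinfty(\Discstd)$ is automatically given by integration against a Radon measure on some compact $K \subseteq \Discstd$ (compare Corollary~\ref{corollary:RadonStuff}), and then integrates the pointwise positivity of $\delta_{[r]}(a^* \starRed[\hbar] a)$ over that measure. You instead observe that $a^* \starRed[\hbar] a$ is a pointwise nonnegative element of $\Cinfty(\Discstd)$ -- again by Proposition~\ref{proposition:evfuncpos}, which is the shared key input -- and then prove directly that any positive functional on the commutative $^*$\=/algebra $\Cinfty(\Discstd)$ is monotone, via the regularization $g + \varepsilon = \left(\sqrt{g+\varepsilon}\right)^* \sqrt{g+\varepsilon}$ and $\varepsilon \to 0^+$. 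What your version buys is self-containedness: it avoids the Riesz--Markov representation and the (nontrivial, though standard) fact that positive functionals on $\Cinfty(M)$ are automatically continuous with compact support, and it makes explicit that no continuity assumption on $\phi$ enters anywhere. What the paper's version buys is that the measure-theoretic picture is already set up in Corollary~\ref{corollary:RadonStuff} and fits the surrounding discussion of which functionals are supported on $\Discstd$ versus $\Discext$. One tiny remark: in your limit step the sign of $\phi(\Unit)$ is irrelevant -- from $\phi(g) + \varepsilon\,\phi(\Unit) \ge 0$ for all $\varepsilon > 0$ one gets $\phi(g) \ge 0$ regardless, so the aside $\phi(\Unit) \ge 0$ is harmless but not needed.
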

\begin{proof}
    Indeed, every such positive linear functional is an integration
    with respect to a Radon measure on a compact subset $K \subseteq
    \Discstd$. Note that now the support $K$ is contained in
    $\Discstd$ instead of $\Discext$, see
    Corollary~\ref{corollary:RadonStuff}. Since all evaluation
    functionals at points of $\Discstd$ are positive with respect to
    $\starRed[\hbar]$, this also holds for the convex combinations
    needed for general Radon measures.
\end{proof}

Given a pre-Hilbert space $\PreHilb$ with inner product
$\skal{\argument}{\argument}_{\PreHilb}$, then we write
$\Adbar(\PreHilb)$ for the unital $^*$\=/algebra of all adjointable
endomorphisms of $\PreHilb$, where being adjointable is to be
understood in the purely algebraic sense that for an endomorphism
$a\in\Adbar(\PreHilb)$ there exists a (necessarily unique)
$a^*\in\Adbar(\PreHilb)$ such that
$\skal{\phi}{a(\psi)}_{\PreHilb} = \skal{a^*(\phi)}{\psi}_{\PreHilb}$
holds for all $\phi,\psi\in\PreHilb$.
\begin{definition}[Continuous representation]
    \label{definition:ContinuousRepresentation}%
    Let $\hbar \ge 0$. A continuous representation of the
    Fr\'echet-$^*$\=/algebra
    $\big(\AnalyticUnten,\starRed[\hbar], \argument^*\big)$ is defined
    as a tuple $(\PreHilb, \rep)$ consisting of a pre-Hilbert space
    $\PreHilb$ and a unital $^*$\=/homomorphism
    $\rep\colon \AnalyticUnten \to \Adbar(\PreHilb)$ that is
    continuous with respect to the weak topology on
    $\Adbar(\PreHilb)$, i.e. the topology defined by the seminorms
    $\Adbar(\PreHilb)\ni a \mapsto
    \abs{\skal{\phi}{a(\psi)}_{\PreHilb}}$
    for all $\phi, \psi \in \PreHilb$.
\end{definition}

The following is well-known: As the product $\starRed[\hbar]$ on
$\AnalyticUnten$ is continuous, one could equivalently demand that
$\rep$ be continuous with respect to the strong topology on
$\Adbar(\PreHilb)$, i.e. the topology defined by the seminorms
$\Adbar(\PreHilb)\ni a \mapsto
\sqrt{\skal{a(\phi)}{a(\phi)}_{\PreHilb}}$
for all $\phi \in \PreHilb$, because
$\AnalyticUnten \ni a \mapsto
\sqrt{\skal{\rep(a)(\phi)}{\rep(a)(\phi)}_{\PreHilb}} =
\sqrt{\skal{\phi}{\rep(a^*\starRed[\hbar]a)(\phi)}_{\PreHilb}} \in
\RR$
is a continuous seminorm on $\AnalyticUnten$ for all
$\phi \in \PreHilb$ if $\rep$ is weakly continuous. Moreover, there
exists a faithful continuous representation of
$\big(\AnalyticUnten, \starRed[\hbar], \argument^*\big)$ if and only
if the continuous positive linear functionals on
$\big(\AnalyticUnten, \starRed[\hbar], \argument^*\big)$ separate
points, i.e. if and only if $\rho(a) = 0$ for one
$a \in \AnalyticUnten$ and all continuous linear functionals
$\rho\colon \AnalyticUnten \to \CC$ that are positive with respect to
$\starRed[\hbar]$ implies that $a = 0$. This is because, on the one
hand, given such a continuous representation, then every vector
$\phi \in \PreHilb$ yields a weakly continuous positive linear
functional
$\Adbar(\PreHilb)\ni a \mapsto \skal{\phi}{a(\phi)}_{\PreHilb}\in
\CC$,
that can be pulled back to a continuous positive linear functional on
$\AnalyticUnten$ by $\rep$, and on the other, the GNS-construction
allows to construct continuous representations out of continuous
positive linear functionals. In this case, a $^*$\=/algebra is also
called $^*$-semisimple \cite[Def.~6.4.1]{schmuedgen:1990a}. In our
case, this allows for the following conclusion:
\begin{theorem}[Existence of faithful continuous representations]
    \label{theorem:FaithfulRep}%
    Let $\hbar \ge 0$ be given, then there exists a faithful
    continuous representation of the Fr\'echet $^*$\=/algebra
    $\big(\AnalyticUnten, \starRed[\hbar], \argument^*\big)$.
\end{theorem}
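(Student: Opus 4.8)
The plan is to avoid constructing any representation by hand and instead reduce everything to the $^*$-semisimplicity criterion recalled in the paragraph preceding the theorem: a faithful continuous representation of $\big(\AnalyticUnten, \starRed[\hbar], \argument^*\big)$ exists precisely when the continuous $\starRed[\hbar]$-positive linear functionals separate points, in the sense that $\rho(a) = 0$ for every such $\rho$ forces $a = 0$. So it suffices to exhibit a sufficiently large supply of continuous positive functionals and then to appeal to the GNS machinery already quoted there.

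First I would invoke Proposition~\ref{proposition:evfuncpos}: for the given $\hbar \ge 0$ and every $[r] \in \Discstd$, the evaluation functional $\delta_{[r]}$ is continuous and positive with respect to $\starRed[\hbar]$. This hands us, at no extra cost, an entire family $\{\delta_{[r]}\}_{[r]\in\Discstd}$ of continuous positive functionals indexed by the disc, and it is exactly here that the hypothesis $\hbar \ge 0$ enters.

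The decisive observation is that this family already separates points of $\AnalyticUnten$. Indeed, by Definition~\ref{definition:AnalyticFunctions} every $a \in \AnalyticUnten$ is literally a complex-valued function on $\Discstd$, namely $a = \hat{a} \circ \DiagUnten$, and $\delta_{[r]}(a) = a([r])$. Hence, if $\delta_{[r]}(a) = 0$ for all $[r] \in \Discstd$, then $a$ is the zero function and therefore $a = 0$ in $\AnalyticUnten$. In particular, if $\rho(a) = 0$ for every continuous positive $\rho$, then a fortiori $\delta_{[r]}(a) = 0$ for all $[r]$, and so $a = 0$, which is precisely the separation property required by the criterion.

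With separation established, the equivalence recalled before the theorem immediately produces a faithful continuous representation; concretely one may take the orthogonal direct sum of the GNS representations $\pi_{\delta_{[r]}}$, which is continuous because each GNS representation is and faithful because $\pi(a) = 0$ would force $\delta_{[r]}(a) = 0$ for every $[r]$ and hence $a = 0$. I do not anticipate a genuine obstacle: all the substantive analytic input (continuity and positivity of $\delta_{[r]}$, density of $\PolynomeUnten$ in $\AnalyticUnten$, and the GNS construction) is already in place. The only points deserving care are that positivity of the evaluation functionals truly needs $\hbar \ge 0$ via Proposition~\ref{proposition:evfuncpos}, and that the separation argument is carried out on the completion $\AnalyticUnten$ rather than merely on $\PolynomeUnten$ — which is automatic here, since elements of $\AnalyticUnten$ are honest functions on $\Discstd$.
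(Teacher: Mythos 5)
Your proposal is correct and follows essentially the same route as the paper: the paper's proof is a one-line appeal to Proposition~\ref{proposition:evfuncpos} together with the observation that the evaluation functionals $\delta_{[r]}$, $[r]\in\Discstd$, separate points of $\AnalyticUnten$, combined with the $^*$-semisimplicity criterion stated just before the theorem. Your elaboration of why separation holds (elements of $\AnalyticUnten$ are honest functions on $\Discstd$) is exactly the intended justification.
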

\begin{proof}
    The evaluation functionals $\delta_{[r]}$ for all
    $[r] \in \Discstd$ from the previous
    Proposition~\ref{proposition:evfuncpos} are continuous and
    positive and clearly separate points.
\end{proof}

Having established the existence of interesting representations by
(unbounded) operators, the question arises which algebra elements are
actually essentially self-adjoint in representations. Therefore,
recall Definition~\ref{definition:filtration} of the filtration of
$\PolynomeUnten$ by degree and Lemma~\ref{lemma:powergrowthestimate}
for the estimate on the growth of $\starRed[\hbar]$-powers. Note that
the formula \eqref{eq:starred} also shows immediately that the
deformed products $\starRed[\hbar]$ are also filtered with respect to
the above filtration, i.e.
$a \starRed[\hbar] b \in \PolynomeUnten[(k+\ell)]$ holds for all
$a \in \PolynomeUnten[(k)], b \in \PolynomeUnten[(\ell)]$ and all
$\hbar \in \HbarDef$.
\begin{theorem}[Essential self-adjointness of observables]
    \label{theorem:selfadjoint}%
    Fix $\hbar \ge 0$ and let $(\PreHilb, \pi)$ be a continuous
    $^*$\=/representation of
    $\big(\AnalyticUnten, \starRed[\hbar], \argument^*\big)$. Then
    $\pi(a)$ is essentially self-adjoint for every Hermitian
    $a \in \PolynomeUnten[(1)]$ and for every Hermitian
    $a \in \PolynomeUnten[(2)]$ that is semi-bounded, i.e. for which
    the set of all $\phi(a) / \phi(\Unit)$ with $\phi$ running over all non-zero
    continuous positive linear functionals is bounded from above or below.
\end{theorem}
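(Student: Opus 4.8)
The plan is to reduce essential self\=/adjointness to growth bounds on $\norm{\pi(a)^m\phi}$ for vectors $\phi$ in the representation space and to feed these into the vector\=/analyticity criteria of Nelson and Nussbaum (both available in \cite{schmuedgen:1990a}). The decisive ingredient is Lemma~\ref{lemma:powergrowthestimate}, which is phrased for the upstairs algebra $\PolynomeOben[\group{U}(1)]$ and must therefore be pulled back along the reduction map. So fix a Hermitian $a\in\PolynomeUnten[(k)]$ with $k\in\{1,2\}$ and a vector $\phi\in\PreHilb$. The vector functional $\omega_\phi := \skal{\phi}{\pi(\argument)\phi}_{\PreHilb}$ is a continuous positive linear functional on $\big(\AnalyticUnten,\starRed[\hbar]\big)$, and I would check that $\omega := \omega_\phi\circ\ReductionMap{\hbar}$ inherits the two hypotheses of Lemma~\ref{lemma:powergrowthestimate}: it is $\starWick[\hbar]$\=/positive because for real $\hbar$ the map $\ReductionMap{\hbar}$ is a $^*$\=/homomorphism onto $\starRed[\hbar]$, whence $\omega(c^*\starWick[\hbar]c) = \norm{\pi(\ReductionMap{\hbar}(c))\phi}^2\ge0$, and it is continuous with respect to the norms $\seminormOben{\rho}{\argument}$ by the continuity of $\ReductionMap{\hbar}$ established in Lemma~\ref{lemma:topred}.

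Applying Lemma~\ref{lemma:powergrowthestimate} to $\omega$, to the degree $k$, and to the lift $a' := \ReductionMapInv{\hbar}(a)\in\PolynomeOben[\group{U}(1),(k)]$ then yields constants $C,D>0$ with $\omega\big((a'^*)^{\starWick[\hbar] m}\starWick[\hbar] a'^{\starWick[\hbar] m}\big)\le \big(CD^m(km)!\big)^2$. Since $\ReductionMap{\hbar}$ is a homomorphism, $a$ is Hermitian and $\pi$ is a $^*$\=/representation, the left\=/hand side equals $\omega_\phi\big(a^{\starRed[\hbar] 2m}\big) = \skal{\phi}{\pi(a)^{2m}\phi}_{\PreHilb} = \norm{\pi(a)^m\phi}^2$, so $\norm{\pi(a)^m\phi}\le CD^m(km)!$ for every $\phi\in\PreHilb$.

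Now the two cases split according to the factorial rate. For $a\in\PolynomeUnten[(1)]$ one has $k=1$ and $\norm{\pi(a)^m\phi}\le CD^m m!$, so every $\phi\in\PreHilb$ is an \emph{analytic} vector for the symmetric operator $\pi(a)$; as $\PreHilb$ is dense, Nelson's analytic\=/vector theorem gives essential self\=/adjointness with no further assumption. For $a\in\PolynomeUnten[(2)]$ one only gets $k=2$ and $\norm{\pi(a)^m\phi}\le CD^m(2m)!$, a rate that no longer yields the quasi\=/analytic (Carleman) bound $\sum_m\norm{\pi(a)^m\phi}^{-1/m}=\infty$ but still yields the weaker Stieltjes bound $\sum_m\norm{\pi(a)^m\phi}^{-1/2m}=\infty$ (since $((2m)!)^{-1/2m}\sim c/m$); this is precisely why the extra semiboundedness hypothesis is needed, as the Stieltjes\=/vector version of the criterion (Nussbaum) applies only to semibounded symmetric operators. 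To invoke it I would first translate the algebraic semiboundedness into an operator bound: if $\phi(a)/\phi(\Unit)\ge -M$ for all nonzero continuous positive $\phi$, then in particular $\skal{\psi}{\pi(a)\psi}_{\PreHilb}\ge -M\norm{\psi}^2$ for all $\psi\in\PreHilb$, so $\pi(a)+M\Unit$ is a positive symmetric operator; a binomial estimate shows the shift preserves the $(2m)!$ growth, hence $\PreHilb$ is a dense set of Stieltjes vectors for $\pi(a)+M\Unit$ and Nussbaum's theorem yields its essential self\=/adjointness, thus that of $\pi(a)$. The case bounded from above is reduced to this by replacing $a$ with $-a$.

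The main obstacle is conceptual rather than computational: one must match the factorial rate $(km)!$ delivered by Lemma~\ref{lemma:powergrowthestimate} to the correct self\=/adjointness theorem, recognizing that degree one lands in Nelson's analytic regime whereas degree two lands only in the Stieltjes regime where semiboundedness is indispensable. The supporting technical care lies in the transfer through $\ReductionMap{\hbar}$ — making sure $\starWick[\hbar]$\=/positivity and continuity of $\omega$ genuinely survive — and in verifying that the algebraically formulated semiboundedness produces a bona fide lower operator bound so that the hypotheses of Nussbaum's theorem are met after the shift.
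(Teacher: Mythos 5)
Your proposal is correct and follows essentially the same route as the paper: pull the vector functionals back along the reduction map to $\PolynomeOben[\group{U}(1)]$, invoke Lemma~\ref{lemma:powergrowthestimate} to get the $(km)!$ growth of $\norm{\pi(a)^m\phi}$, and conclude via the analytic-vector criterion for $k=1$ and the semi-analytic (Stieltjes) criterion for semibounded operators for $k=2$. The paper's proof is just a terser version of this, citing \cite{masson.mcclary:1972a} for both vector-analyticity criteria where you cite Nelson and Nussbaum separately; the details you supply (positivity and continuity of the pulled-back functional, the translation of algebraic semiboundedness into an operator lower bound) are exactly the ones the paper leaves implicit.
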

\begin{proof}
    If $a$ is Hermitian then $\pi(a)$ is symmetric. Moreover, every
    vector $\phi \in \PreHilb$ yields a continuous positive linear
    functional
    $\AnalyticUnten\ni a \mapsto \skal{\phi}{\pi(a)(\phi)}_\PreHilb
    \in \CC$
    on $\AnalyticUnten$, which can be pulled back to a continuous
    positive linear functional on
    $\PolynomeOben[\group{U}(1)] \subseteq \PolynomeOben$
    with the Wick star product $\starWick[\hbar]$.
    The estimate from Lemma~\ref{lemma:powergrowthestimate} then
    shows that every such
    $\phi \in \PreHilb$ is an analytic vector of every $\pi(a)$ if
    $a \in \PolynomeUnten[(1)]$, and a semi-analytic vector of every
    $\pi(a)$ if $a \in \PolynomeUnten[(2)]$.  In both cases it follows
    from Nelson's criterium for self-adjointness that $\pi(a)$ is essentially
    self-adjoint, see e.g. \cite{masson.mcclary:1972a} for a direct proof.
\end{proof}

%
%

\subsection{\texorpdfstring{Exponentiation of the
$\group{su}(1, n)$-Action}{Exponentiation of the su(1,n)-Action}}
\label{subsec:ExponentationSUAction}

By reduction to $\Discstd$, the $\group{U}(1, n)$-symmetry of
$\CC^{1+n}$ is reduced to a $\group{SU}(1, n)$-symmetry. Recall that
$\lie{u}(1, n) \ni u \mapsto \MMup(u) := \frac{1}{2\I} h_{\mu\nu}
u^\mu_\rho z^\rho \cc{z}^\nu \in \PolynomeOben \subseteq
\Smooth(\CC^{1+n})$
is a (classical) equivariant moment map for this action. As $\MMup(u)$
is linear in the $z$ and $\cc{z}$-coordinates, only terms up to first
order in $\hbar$ will contribute to the Wick star product with
$\MMup(u)$, so
$f \racts u = \poi{f}{\MMup(u)} = \kom{f}{\frac{\I}{\hbar}\MMup(u)}$
and
$\frac{\I}{\hbar} \MMup(\kom{u}{v}) =
\frac{\I}{\hbar}\poi{\MMup(u)}{\MMup(v)} =
\kom{\frac{\I}{\hbar}\MMup(u)}{\frac{\I}{\hbar}\MMup(v)}$
hold for all $u, v \in \lie{u}(1, n)$ and $f \in \PolynomeOben$,
i.e. $\frac{\I}{\hbar} \MMup$ is an equivariant quantum moment
map. Reduction to $\Discstd$ then yields the following well-known
result, see
e.g. \cite[Lemma~5]{bordemann.brischle.emmrich.waldmann:1996b} for the
case of reduction to $\mathbbm{CP}^n$:
\begin{proposition}
    \label{proposition:QuantumMomentumMap}%
    The map $\MMdown\colon \lie{su}(1, n) \to \PolynomeUnten$,
    $u \mapsto \MMdown(u) := \big(\ReductionMap{0}\circ\MMup\big)(u)= \frac{1}{2\I} h_{\mu\nu} u^\mu_\rho
    \monomialDown{E_\rho}{E_\nu}$
    is a classical equivariant moment map (with respect to the Poisson
    tensor $\piUnten$ on $\Discstd$) and
    $\ReductionMap{\hbar}\circ\frac{\I}{\hbar}\MMup = \frac{\I}{\hbar} \MMdown$ an
    equivariant quantum moment map (with respect to $\starRed[\hbar]$)
    for all $\hbar \in \HbarDef$.
\end{proposition}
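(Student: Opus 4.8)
The plan is to transport the moment map identities that already hold for $\MMup$ on $\CC^{1+n}$ down to $\Discstd$ through the two reduction maps, exploiting that both $\ReductionMap{0}$ and $\ReductionMap{\hbar}$ send $\MMup(u)$ to $\MMdown(u)$. The preliminary observation carrying the whole argument is that $\MMup(u) = \frac{1}{2\I}h_{\mu\nu}u^\mu_\rho\,\monomial{E_\rho}{E_\nu}$ is a linear combination of monomials of bidegree $(1,1)$; in particular it is $\group{U}(1)$-invariant and lies in $\PolynomeOben[\group{U}(1)]$, so both reduction maps may be applied. Evaluating the defining formula \eqref{eq:ReductionMapHbar} at $\abs{P}=1$ gives the normalization factor $(2\hbar)^{1}\big(\tfrac{1}{2\hbar}\big)_{1} = 2\hbar\cdot\tfrac{1}{2\hbar}=1$, whence $\ReductionMap{\hbar}(\monomial{E_\rho}{E_\nu}) = \monomialDown{E_\rho}{E_\nu} = \ReductionMap{0}(\monomial{E_\rho}{E_\nu})$. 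By linearity this yields $\ReductionMap{\hbar}(\MMup(u)) = \MMdown(u) = \ReductionMap{0}(\MMup(u))$ for every $u\in\lie{su}(1,n)$, which is exactly the asserted identity $\ReductionMap{\hbar}\circ\frac{\I}{\hbar}\MMup = \frac{\I}{\hbar}\MMdown$ and also exhibits $\MMup(u)$ as a common preimage of $\MMdown(u)$ under both maps.

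For the classical assertion I would use that $\ReductionMap{0}$ is a $\group{U}(1,n)$-equivariant Poisson $^*$-homomorphism. Writing an arbitrary element of $\PolynomeUnten$ as $f = \ReductionMap{0}(\tilde f)$ with $\tilde f\in\PolynomeOben[\group{U}(1)]$ and using that $\ReductionMap{0}$ intertwines the Poisson brackets,
\[
  \poi{f}{\MMdown(u)}
  = \ReductionMap{0}\big(\poi{\tilde f}{\MMup(u)}\big)
  = \ReductionMap{0}(\tilde f \racts u)
  = f \racts u,
\]
where the middle equality is the classical moment map property of $\MMup$ and the last step is the infinitesimal form of the equivariance of $\ReductionMap{0}$ (differentiate $\ReductionMap{0}(\tilde f \racts U) = \ReductionMap{0}(\tilde f)\racts U$ at the identity in direction $u$). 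The Lie algebra homomorphism property follows verbatim, $\poi{\MMdown(u)}{\MMdown(v)} = \ReductionMap{0}\big(\poi{\MMup(u)}{\MMup(v)}\big) = \ReductionMap{0}\big(\MMup(\kom{u}{v})\big) = \MMdown(\kom{u}{v})$.

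The quantum assertion runs in exact parallel, now with $\ReductionMap{\hbar}$ regarded as the $\group{U}(1,n)$-equivariant algebra homomorphism from $\big(\PolynomeOben[\group{U}(1)],\starWick[\hbar]\big)$ onto $\big(\PolynomeUnten,\starRed[\hbar]\big)$, which is immediate from Definition~\ref{definition:starred} and Proposition~\ref{proposition:quantumredkernel} together with the equivariance noted at the end of Section~\ref{subsection:quantum}. Lifting to a preimage $a'$ of $a$ and to the preimage $\MMup(u)$ of $\MMdown(u)$,
\[
  \tfrac{\I}{\hbar}\kom{a}{\MMdown(u)}_{\starRed[\hbar]}
  = \ReductionMap{\hbar}\big(\tfrac{\I}{\hbar}\kom{a'}{\MMup(u)}_{\starWick[\hbar]}\big)
  = \ReductionMap{\hbar}(a' \racts u)
  = a \racts u,
\]
the middle equality being the quantum moment map property of $\MMup$ from Section~\ref{subsection:quantum}. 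Likewise $\kom{\tfrac{\I}{\hbar}\MMdown(u)}{\tfrac{\I}{\hbar}\MMdown(v)}_{\starRed[\hbar]} = \ReductionMap{\hbar}\big(\kom{\tfrac{\I}{\hbar}\MMup(u)}{\tfrac{\I}{\hbar}\MMup(v)}_{\starWick[\hbar]}\big) = \ReductionMap{\hbar}\big(\tfrac{\I}{\hbar}\MMup(\kom{u}{v})\big) = \tfrac{\I}{\hbar}\MMdown(\kom{u}{v})$, using the upstairs homomorphism property and the identity from the first paragraph.

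The individual computations are short, so I expect no serious obstacle; the only two points deserving care are bookkeeping. First, the passage from the group-level equivariance of $\ReductionMap{0}$ and $\ReductionMap{\hbar}$ to the infinitesimal intertwining $\ReductionMap{\bullet}(f\racts u)=\ReductionMap{\bullet}(f)\racts u$ used throughout. Second, the independence of $\kom{a'}{\MMup(u)}_{\starWick[\hbar]}$ modulo $\ker\ReductionMap{\hbar}$ of the chosen preimage $a'$, which holds because $\ker\ReductionMap{\hbar}$ is a two-sided $\starWick[\hbar]$-ideal by Proposition~\ref{proposition:quantumredkernel}, so that the lifted commutators descend unambiguously after reduction. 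The genuine content is thus confined to the normalization computation $(2\hbar)\big(\tfrac{1}{2\hbar}\big)_{1}=1$ of the first paragraph, which is what forces $\ReductionMap{\hbar}$ and $\ReductionMap{0}$ to agree on the bidegree-$(1,1)$ map $\MMup$.
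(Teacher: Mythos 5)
Your proposal is correct and follows essentially the same route as the paper, which simply invokes the $\group{U}(1,n)$-equivariance of $\ReductionMap{0}$ and $\ReductionMap{\hbar}$ together with the fact that they are morphisms for the respective (Poisson resp.\ deformed) products. You merely spell out the details the paper leaves implicit, in particular the normalization $(2\hbar)\big(\tfrac{1}{2\hbar}\big)_1 = 1$ showing that $\ReductionMap{\hbar}$ and $\ReductionMap{0}$ agree on the bidegree-$(1,1)$ element $\MMup(u)$.
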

\begin{proof}
    This follows directly from $\ReductionMap{0}$ and $\ReductionMap{\hbar}$
    being $\group{U}(1,n)$-equivariant and the algebraic version of the
    reduction procedure, i.e. that $\ReductionMap{0}$ is a morphism of
    Poisson-$^*$\=/algebras, or the construction of $\starRed[\hbar]$
    such that $\ReductionMap{\hbar}$ becomes a morphism of
    $^*$\=/algebras, respectively.
\end{proof}

It would of course be a nice property of the deformed algebra if we
could exponentiate the inner action of the $\lie{su}(1, n)$-algebra to
an inner action of the $\group{SU}(1, n)$-group. So note that the
image of $\MMdown$ is in $\PolynomeUnten[(1)]$. However, from
Lemma~\ref{lemma:powergrowthestimate} one cannot deduce that the
$\starRed[\hbar]$-exponential series of all elements
$a \in \PolynomeUnten[(1)]$ converges. In fact:
\begin{example}
    Let $n = 1$, $\hbar = 1/2$ and
    $a = \monomialDown{E_0}{E_1} = \monomialDownRed{0}{1}$, then the
    $m$-th $\starRed[\hbar]$-power of $a$ is
    \begin{equation}
        \label{eq:ExponentiateMMap}
        a^{\starRed[\hbar] m}
        =
        m!\,\monomialDown{mE_0}{mE_1}
        =
        m!\,\monomialDownRed{0}{m},
    \end{equation}
    and so
    $\lim_{M \to \infty} \sum_{m = 0}^M a^{\starRed[\hbar] m} / m!  =
    \lim_{M \to \infty} \sum_{m=0}^M \monomialDownRed{0}{m}$
    is not a Cauchy-sequence in any topology on $\PolynomeUnten$ that
    makes the evaluation functionals $\delta_{[r]}$ at all
    $[r]\in\Discstd$ continuous, as this series does not converge
    in the point $[r] \in \Discstd$ with $\chartstd([r]) = w^1([r]) = 1/\sqrt{2}$,
    where $\monomialDownRed{0}{m}([r]) = 2^{m/2}$.
\end{example}
Note that this also rules out the existence of any locally
\emph{multiplicatively} convex topology on $\PolynomeUnten$ that makes
all these evaluation functionals continuous: the example shows that
there is no entire calculus which for a locally multiplicatively
convex algebra would exist.

Nevertheless, by Theorem~\ref{theorem:selfadjoint}, all elements in
the image of $\MMdown$ are essentially self-adjoint in every
continuous $^*$\=/representation for all $\hbar > 0$. Moreover,
Nelson's theorem even allows to exponentiate this inner Lie algebra action
to an inner Lie group action in such representations:
\begin{theorem}[Exponentiation of $\lie{su}(1, n)$-action]
    \label{theorem:ExponentiateStuff}%
    Fix $\hbar > 0$ and let $(\PreHilb, \pi)$ be a continuous
    $^*$\=/representation of
    $\big(\AnalyticUnten, \starRed[\hbar], \argument^*\big)$ and
    $\PreHilb^{\textup{cpl}}$ the completion of $\PreHilb$. Then there
    exists a unique unitary representation
    $\GroupRep\colon \group{SU}(1, n) \to
    \Unitary(\PreHilb^{\textup{cpl}})$
    such that
    $\pi(\MMdown(u))^\OpClosure =
    \GroupRepDeriv(u)^\OpClosure$
    holds for all $u \in \lie{su}(1, n)$, where $\argument^\OpClosure$
    denotes the closure of an operator on $\PreHilb^{\textup{cpl}}$ and
    $\GroupRepDeriv(u)$ the derivation of the representation
    $\GroupRep$ at the neutral element in direction $u$, i.e. $\GroupRepDeriv(u)$
    is the
    operator in $\PreHilb^{\textup{cpl}}$ whose domain is
    $\Smooth(\GroupRep)$, the set of all vectors
    $\phi \in \PreHilb^{\textup{cpl}}$ for which the map
    $\group{SU}(1, n) \ni g \mapsto \skal{\psi}{\GroupRep(g)
      \phi}_{\PreHilb^{\textup{cpl}}} \in \CC$
    is smooth for all $\psi\in\PreHilb^{\textup{cpl}}$, and is defined as
    \begin{equation}
        \GroupRepDeriv(u) \phi
        :=
        \frac{\D}{\D t}\At{t=0}
        \GroupRep\big(\exp(tu)\big) \phi
    \end{equation}
    for all $\phi \in \Smooth(\GroupRep)$.
\end{theorem}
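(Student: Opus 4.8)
The plan is to exhibit $u \mapsto \GroupRepDeriv(u) := \rep\big(\tfrac{\I}{\hbar}\MMdown(u)\big)$ as a representation of $\lie{su}(1,n)$ by skew\=/symmetric operators on the common invariant dense domain $\PreHilb$ and then to integrate it by Nelson's theorem, so that $\GroupRepDeriv(u)^{\OpClosure} = \rep\big(\tfrac{\I}{\hbar}\MMdown(u)\big)^{\OpClosure}$, which is the content of the asserted relation once $\MMdown$ is taken with the quantum moment map factor $\tfrac{\I}{\hbar}$ of Proposition~\ref{proposition:QuantumMomentumMap}. First I would observe that $\MMdown(u)$ is Hermitian for $u \in \lie{su}(1,n)$ and $\hbar \in \RR$, so that each $\GroupRepDeriv(u)$ is skew\=/symmetric, and that the quantum moment map property $\kom{\tfrac{\I}{\hbar}\MMdown(u)}{\tfrac{\I}{\hbar}\MMdown(v)}_{\starRed[\hbar]} = \tfrac{\I}{\hbar}\MMdown(\kom{u}{v})$ of Proposition~\ref{proposition:QuantumMomentumMap}, after applying the $^*$\=/homomorphism $\rep$, turns $\GroupRepDeriv$ into a homomorphism of Lie algebras into the skew\=/symmetric endomorphisms of $\PreHilb$. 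Since $\PreHilb$ is $\rep$\=/invariant, this is a genuine Lie algebra representation on a common invariant domain.

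The analytic core is the essential self\=/adjointness of the Nelson Laplacian. Fixing a basis $u_1, \dots, u_d$ of $\lie{su}(1,n)$ and writing $c_i := \tfrac{\I}{\hbar}\MMdown(u_i) \in \PolynomeUnten[(1)]$, which is anti\=/Hermitian, I would pass to $\Delta := -\sum_{i=1}^d c_i \starRed[\hbar] c_i \in \PolynomeUnten[(2)]$. This element is Hermitian, and it is semi\=/bounded in the sense of Theorem~\ref{theorem:selfadjoint}: for every non\=/zero continuous $\starRed[\hbar]$\=/positive functional $\phi$ one has $\phi(\Unit) > 0$ and, using $c_i^* = -c_i$, $\phi(c_i \starRed[\hbar] c_i) = -\phi(c_i^* \starRed[\hbar] c_i) \le 0$, whence $\phi(\Delta)/\phi(\Unit) \ge 0$. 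Theorem~\ref{theorem:selfadjoint} then applies and shows that $\rep(\Delta) = -\sum_i \GroupRepDeriv(u_i)^2$, i.e. up to sign the Nelson Laplacian of the representation, is essentially self\=/adjoint on $\PreHilb$. By Nelson's integrability theorem this guarantees a unique unitary representation on $\PreHilb^{\textup{cpl}}$ of the connected, simply connected Lie group with Lie algebra $\lie{su}(1,n)$, whose generators are the closures $\GroupRepDeriv(u)^{\OpClosure}$. Stone's theorem, together with the essential self\=/adjointness of each $\rep(\MMdown(u))$ from Theorem~\ref{theorem:selfadjoint}, then identifies the one\=/parameter groups $t \mapsto \exp\big(t\,\GroupRepDeriv(u)^{\OpClosure}\big)$ with those generated by $\rep(\MMdown(u))^{\OpClosure}$, yielding the relation in the statement.

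The hard part will be the descent from this simply connected covering group to $\group{SU}(1,n)$ itself. As $\group{SU}(1,n)$ has maximal compact subgroup $\group{U}(n)$ and hence $\pi_1 \cong \ZZ$, Nelson's theorem a priori only produces a representation of the universal cover, and one must verify that the central kernel of the covering acts trivially. I would reduce this to a single periodic generator: choosing $u_0 \in \lie{su}(1,n)$ in the compact Cartan subalgebra with minimal period $\exp(T_0 u_0) = \Unit$ and $[u_0]$ generating $\pi_1$, triviality of the kernel is equivalent to $\exp\big(T_0\,\GroupRepDeriv(u_0)^{\OpClosure}\big) = \Unit$, i.e. to the spectral condition $\mathrm{spec}\big(\rep(\MMdown(u_0))^{\OpClosure}\big) \subseteq \tfrac{2\pi\hbar}{T_0}\ZZ$. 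Checking this is the delicate step, since it must be read off from the explicit form of the moment map — for $n=1$ and $u_0 = \mathrm{diag}(\I,-\I)$ one computes $\MMdown(u_0) = -\tfrac{1}{2}\big(\monomialDown{E_0}{E_0} + \monomialDown{E_1}{E_1}\big)$ — together with the description of the continuous positive functionals from Corollary~\ref{corollary:RadonStuff}, and it is exactly here that the admissible range of $\hbar$ enters. Uniqueness of $\GroupRep$ is then immediate: any two candidates share the self\=/adjoint generator of each one\=/parameter subgroup by Stone's theorem, and these subgroups generate the connected group $\group{SU}(1,n)$.
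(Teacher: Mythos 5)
Your first two paragraphs are, in substance, the paper's own proof: the paper likewise takes the generators to be $\rep\big(\tfrac{\I}{\hbar}\MMdown(u)\big)$ via the quantum moment map of Proposition~\ref{proposition:QuantumMomentumMap}, reduces everything to the essential self\-/adjointness of the Nelson Laplacian $\Delta = \sum_i \tfrac{1}{\hbar^2}\MMdown(u_i)^2 \in \PolynomeUnten[(2)]$, and obtains that from Theorem~\ref{theorem:selfadjoint} because $\Delta$ is Hermitian and semi\-/bounded. Your verification of semi\-/boundedness via $\phi(c_i^*\starRed[\hbar]c_i)\ge 0$ is actually more explicit than the paper's ``$\Delta$ is clearly bounded from below'', and your reading of the asserted identity as requiring the factor $\tfrac{\I}{\hbar}$ (so that the generators are skew\-/symmetric) is the correct one.

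Where you diverge is the third paragraph, and there you have put your finger on a point the paper does not address at all: Nelson's integrability theorem (Schm\"udgen, Thm.~10.5.6) produces a unitary representation of the \emph{simply connected} Lie group with Lie algebra $\lie{su}(1,n)$, whereas $\group{SU}(1,n)$ has $\pi_1\cong\ZZ$, so an additional descent argument --- which you correctly reduce to a spectral integrality condition for a compact generator --- is in principle required. The paper's proof simply stops after applying Nelson's theorem and silently identifies the resulting group with $\group{SU}(1,n)$. So, measured against the paper, your proposal establishes everything the paper establishes and in addition isolates an obligation that the paper's proof does not discharge; but as a self\-/contained proof of the statement as written, yours is incomplete at exactly the step you flag, since the integrality condition is asserted to be ``delicate'' rather than verified. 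If you want to close that gap you must either carry out the spectral computation for a generator of the compact Cartan subalgebra in the GNS representations singled out by Proposition~\ref{proposition:evfuncpos} (where the admissible values of $\hbar$ may genuinely enter), or restate the theorem for the universal covering group, for which Nelson's theorem gives the result directly.
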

\begin{proof}
    By \cite[Thm.~5]{nelson:1959a} (see also
    \cite[Thm.~10.5.6]{schmuedgen:1990a}) we only have to show that the
    Nelson Laplacian $\Delta := \sum \frac{1}{\hbar^2}\MMdown(u_i)^2$, with
    $u_i$ running over a basis of $\lie{su}(1, n)$, is represented by an
    essentially self-adjoint operator. As the image of $\MMup$ is in
    $\PolynomeUnten[(1)]$ and Hermitian, it follows that
    $\Delta \in \PolynomeUnten[(2)]$, and as $\Delta$ is clearly
    bounded from below, we can apply Theorem~\ref{theorem:selfadjoint}.
\end{proof}

%
%

\subsection{\texorpdfstring{An Additional $\ZZ_2$-Symmetry for $n = 1$}{An
Additional Z2-Symmetry for n = 1}}
\label{subsec:AdditionalSymmetry}
The case $n=1$ seems to be of special importance due to some
interesting discrete symmetries. Here we can identify $\CC\PP^1$ with
$\cc{\CC} = \CC\cup\{\infty\}$, the one point compactification of
$\CC$, and thus $\Discext[1] \subseteq \CC\PP^1$ (via
$\embedInCPnExt$) with $\set{v\in \CC}{\abs{v}\neq1} \cup \{\infty\}$,
which makes the relation between $\Discstd[1] \cong \DD$ and
$\Discext[1]\cong \DD \mathbin{\dot{\cup}} \DD$ more apparent.
Note also that analogously,
$\DiscDouble[1] \subseteq \CC\PP^1\times\CC\PP^1$ (via
$\embedInCPnCpn$) is identified with $(\cc{\CC}\times\cc{\CC})\backslash
\big(\set{(u,v)\in \CC^2}{uv=1} \cup \{(0,\infty),(\infty,0)\} \big)$.

From the identification of $\Discext[1]$ with a subset of $\cc{\CC}$
one can already guess that there is another $\ZZ_2$-symmetry that we
did not discuss yet, which corresponds to the involution $\cc{\CC}\ni
w\mapsto-1/w\in\cc{\CC}$.
\begin{definition}
    Define the holomorphic involution $\invSpecial$ of
    $\DiscDouble[1]$ as
    \begin{equation}
        \invSpecial\colon
        [p^0,p^1,q^0,q^1]
        \; \mapsto \;
        \invSpecial([p^0,p^1,q^0,q^1])
        :=
        [-\I p^1, \I p^0, -\I q^1, \I q^0].
    \end{equation}
\end{definition}
It is not hard to check that this is indeed a well-defined holomorphic
involution of $\DiscDouble[1]$.  Moreover, it acts on functions in
$\Holomorphic(\DiscDouble[1])$ via pullback as $\invSpecial^*(\hat{a})
:= \hat{a}\circ \invSpecial$ and especially
\begin{equation}
    \label{eq:invSpecialAction}
    \invSpecial^*
    \big(\monomialDownHat{(P_0,P_1)}{(Q_0,Q_1)}\big)
    =
    (-\I)^{P_0+Q_0}\I^{P_1+Q_1}
    \monomialDownHat{(P_1,P_0)}{(Q_1,Q_0)}
    =
    (-1)^{P_1+Q_0} \monomialDownHat{(P_1,P_0)}{(Q_1,Q_0)}
\end{equation}
for all $P,Q\in\NN_0^2$ with $\abs{P}=\abs{Q}$. As
$\Holomorphic(\DiscDouble[1])$ is isomorphic to $\AnalyticUnten[1]$,
this of course yields an involution of $\AnalyticUnten[1]$ as
well. With respect to the standard chart this reads for the function
$\monomialDownHat{(P_0,P_1)}{(Q_0,Q_1)} =
\frac{w^{P_1}\cc{w}^{Q_1}}{(1-\abs{w}^2)^{\abs{P}}}$ as
\begin{equation}
    \label{eq:SpecialInvolutionOnF}
    \invSpecial^*\big(\monomialDown{(P_0,P_1)}{(Q_0,Q_1)}\big)
    =
    (-1)^{P_1+Q_0}\monomialDown{(P_1,P_0)}{(Q_1,Q_0)}
    =
    \frac{(-1)^{P_1+Q_0}w^{P_0}\cc{w}^{Q_0}}{(1-\abs{w}^2)^{\abs{P}}}
    =
    \frac{(-1/w)^{P_1}(-1/\cc{w})^{Q_1}}{(1-1/\abs{w}^2)^{\abs{P}}}
\end{equation}
i.e. this involution, transfered to $\AnalyticUnten[1]$, indeed
describes the above mentioned involution $w \mapsto -1/w$. It is also
worth noting that $\invSpecial$ commutes with $\involutionAll$, so its
pullback commutes with the $^*$-involution on
$\AnalyticUnten[1]$. With respect to the star product, we have:
\begin{proposition}
    \label{proposition:invequivariant}%
    Every $\starRed[\hbar]$ on $\AnalyticUnten[1]$ is
    $\invSpecial$-equivariant for all $\hbar\in\HbarDef$,
    i.e. $\invSpecial^*(a\starRed[\hbar]b) =
    \invSpecial^*(a)\starRed[\hbar]\invSpecial^*(b)$ holds for all
    $a,b\in\AnalyticUnten[1]$.
\end{proposition}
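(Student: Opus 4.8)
The plan is to reduce the claim to a statement about monomials and then to verify it using the explicit product formula \eqref{eq:starred} together with the explicit action \eqref{eq:invSpecialAction} of $\invSpecial^*$ on the monomials. Since both $\starRed[\hbar]$ and $\invSpecial^*$ are continuous on $\AnalyticUnten[1]$ (the star product by Theorem~\ref{theorem:TheStarProductAlgebra}, and $\invSpecial^*$ because it is the pullback by a biholomorphism, hence continuous for the locally uniform topology), and since $\PolynomeUnten[1]$ is dense in $\AnalyticUnten[1]$ by Theorem~\ref{theorem:Completion}, it suffices to verify the intertwining identity on a spanning set of $\PolynomeUnten[1]$. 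I would work with the monomials $\monomialDown{P}{Q}$ (with $P,Q\in\NN_0^2$, $\abs{P}=\abs{Q}$), rather than the fundamental monomials, since the product formula \eqref{eq:starred} is stated for these.

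First I would record the action on monomials. From \eqref{eq:invSpecialAction} we have, for $P=(P_0,P_1)$, $Q=(Q_0,Q_1)$,
\begin{equation*}
    \invSpecial^*\big(\monomialDown{P}{Q}\big)
    =
    (-1)^{P_1+Q_0}\,\monomialDown{\sigma(P)}{\sigma(Q)},
\end{equation*}
where $\sigma(P):=(P_1,P_0)$ denotes the swap of the two components. The key observation is that $\sigma$ is an involutive linear bijection of $\NN_0^2$ that preserves $\abs{\argument}$, and that the multi-index combinatorics in \eqref{eq:starred} — the binomials $\binom{P}{T}\binom{S}{T}$, the constraint $T\le\min\{P,S\}$, and the Pochhammer factors depending only on $\abs{P},\abs{S},\abs{P+S-T}$ — are all manifestly compatible with applying $\sigma$ simultaneously to every multi-index and replacing the summation variable $T$ by $\sigma(T)$.

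Concretely, I would apply $\invSpecial^*$ to both sides of \eqref{eq:starred} with $n=1$. On the left one gets $(-1)^{(P+R-T)_1+(Q+S-T)_0}$ times $\monomialDown{\sigma(P+R-T)}{\sigma(Q+S-T)}$ inside the sum, while on the right, expanding $\invSpecial^*(\monomialDown{P}{Q})\starRed[\hbar]\invSpecial^*(\monomialDown{R}{S})$ via \eqref{eq:starred} again, one gets the prefactor $(-1)^{P_1+Q_0}(-1)^{R_1+S_0}$ times a sum over a new index $T'$ ranging over $\min\{\sigma(P),\sigma(S)\}$. Substituting $T'=\sigma(T)$ puts the two sums in bijection term by term, and the only thing to check is that the four collected sign exponents match, i.e. that $(-1)^{T_0}\cdot(-1)^{(P+R-T)_1+(Q+S-T)_0}$ equals $(-1)^{P_1+Q_0+R_1+S_0}\cdot(-1)^{\sigma(T)_0}$ modulo $2$; this follows from $\sigma(T)_0=T_1$, $\abs{P}=\abs{Q}$, $\abs{R}=\abs{S}$, and a short parity bookkeeping using $T\le\min\{P,S\}$ so that $(P+R-T)_1=P_1+R_1-T_1$ etc. The Pochhammer and binomial factors coincide automatically because they depend only on the $\sigma$-invariant quantities $\abs{P},\abs{S},\abs{P+S}$ and on $\binom{P}{T}=\binom{\sigma(P)}{\sigma(T)}$.

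The main obstacle is precisely this sign bookkeeping: one must verify that the spurious signs introduced by $\invSpecial^*$ on each monomial recombine correctly under the product, and that the reindexing $T\mapsto\sigma(T)$ respects the summation constraint $T\le\min\{P,S\}$ (which it does, since $\min\{\sigma(P),\sigma(S)\}=\sigma(\min\{P,S\})$ componentwise). Everything else is formal. Once the monomial identity is established, continuity and density promote it to all of $\AnalyticUnten[1]$: for $a,b\in\AnalyticUnten[1]$ expanded in the Schauder basis, both $\invSpecial^*(a\starRed[\hbar]b)$ and $\invSpecial^*(a)\starRed[\hbar]\invSpecial^*(b)$ are given by absolutely convergent series (by Theorem~\ref{theorem:TheStarProductAlgebra}) whose coefficients agree, so the two sides coincide.
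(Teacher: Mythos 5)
Your proposal is correct and follows essentially the same route as the paper: reduce by continuity and density to checking the identity on the monomials $\monomialDown{P}{Q}$, then verify it via the explicit formulas \eqref{eq:invSpecialAction} and \eqref{eq:starred}. The paper leaves the sign and reindexing bookkeeping as an "easily done" check, which you carry out explicitly and correctly.
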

\begin{proof}
    As $\invSpecial$ is a holomorphic involution of $\DiscDouble[1]$,
    its pullback acts by a continuous linear function on
    $\AnalyticUnten[1]$ and so it is sufficient to check that
    $\invSpecial^*\big(\monomialDown{P}{Q} \starRed[\hbar]
    \monomialDown{R}{S}\big) = \invSpecial^*(\monomialDown{P}{Q})
    \starRed[\hbar] \invSpecial^*(\monomialDown{R}{S})$ holds for all
    $P, Q, R, S \in \NN_0^2$ with $\abs{P} = \abs{Q}$ and $\abs{R} =
    \abs{S}$, which is easily done using equations
    \eqref{eq:invSpecialAction} and \eqref{eq:starred}.
\end{proof}

%
%

{
  \footnotesize

}

%
%

\end{document}